\newtheorem{thm}{Theorem}[section]
\newtheorem{prop}[thm]{Proposition}
\newtheorem{lem}[thm]{Lemma}
\newtheorem{cor}[thm]{Corollary}
\newtheorem{example}[thm]{Example}
\theoremstyle{definition}
\newtheorem*{claim*}{Claim}
\newtheorem*{note*}{Note}
\newtheorem{rem}[thm]{Remark}
\numberwithin{equation}{thm}
\def\Hom{\mathrm{Hom}}
\def\Ext{\mathrm{Ext}}
\newcommand{\coker}{\mathop{\mathrm{coker}}\nolimits}
\newcommand{\depth}{\mathop{\mathrm{depth}}\nolimits}
\newcommand{\Ass}{\mathop{\mathrm{Ass}}\nolimits}
\newcommand{\Min}{\mathop{\mathrm{Min}}\nolimits}
\def\ai{\mathrm{a}}
\def\fka{\mathfrak a}
\def\E{\mathrm E}
\def\m{\mathfrak m}
\def\n{\mathfrak n}
\def\p{\mathfrak p}
\def\q{\mathfrak q}
\def\L{\mathcal L}
\def\w{\omega}
\def\P{\mathcal P}
\def\Z{\Bbb Z}
\def\V{\mathrm V}
\newcommand{\calL}{\mathcal{L}}
\newcommand{\calK}{\mathcal{K}}
\newcommand{\calM}{\mathcal{M}}
\newcommand{\calN}{\mathcal{N}}
\newcommand{\NCM}{\mathrm{NCM}}
\def\K{\mathrm{K}}
\def\H{\mathrm{H}}
\def\E{\mathrm{E}}
\def\fkM{\mathfrak M}
\def\fkN{\mathfrak N}
\def\th{\mbox{\tiny th}}
\def\Supp{\mathrm{Supp}}
\def\height{\mathrm{ht}}
\newcommand{\Spec}{\mathop{\mathrm{Spec}}\nolimits}
\newcommand{\Proj}{\mathop{\mathrm{Proj}}\nolimits}
\newcommand{\grade}{\mathop{\mathrm{grade}}\nolimits}
\def\A{{\mathcal A}}
\def\G{{\mathcal G}}
\def\R{{\mathcal R}}
\def\F{{\mathcal F}}
\def\M{{\mathcal M}}
\begin{document}


\title{Some characterizations of Gorenstein Rees Algebras}

\author{Shin-ichiro Iai} 
\address{Mathematics laboratory, Sapporo College, Hokkaido University of Education, 002-8502 JAPAN} 
\email{iai.shinichiro@s.hokkyodai.ac.jp}

\thanks{2020 {\em Mathematics Subject Classification.} 13H10, 13A02, 13A15.}
\thanks{{\em Key words and phrases.} Gorenstein ring, canonical module, Rees algebra, extended Rees algebra, associated graded ring}

\maketitle

\begin{center}
\small{
\it{Dedicated to the memory of Professor Shiro Goto and Professor Yasuhiro Shimoda} }
\end{center}

\begin{abstract}
The aim of this paper is to elucidate the relationship between the Gorenstein Rees algebra $\R(I):=\bigoplus_{i\ge 0}I^i$ of an ideal $I$ in a complete Noetherian local ring $A$ and the graded canonical module of the extended Rees algebra $\R'(I):=\bigoplus_{i\in\Z}I^i$. It is known that the Gorensteinness of $\R(I)$ is closely related to the property of the graded canonical module of the associated graded ring $\G(I):=\bigoplus_{i\ge 0}I^i/I^{i+1}$. However, there appears to be a shortage of satisfactory references analyzing the relationship between $\R(I)$ and $\R'(I)$ unless the ring $\G(I)$ is Cohen-Macaulay. This paper provides a characterization of the Gorenstein property of $\R(I)$ using the graded canonical module of $\R'(I)$ without assuming that the base ring $A$ is Cohen-Macaulay. Applying our criterion, we demonstrate that a certain Kawasaki's arithmetic Cohen-Macaulayfication becomes a Gorenstein ring when $A$ is a quasi-Gorenstein local ring with finite local cohomology.
\end{abstract}

\section{Introduction.}

Throughout this paper, all rings are assumed to be commutative with nonzero identity. 
Let $A$ be a Noetherian local ring
with maximal ideal $\m$ 
of dimension $d$ 
and let $A[\,t\,]$ denote a graded polynomial ring over $A$ in one variable $t$ such that $\deg t=1$ and $\deg a=0$ for all $a\in A\setminus\{0 \}$.
Let $\F=\{F_i\}_{i\in\mathbb{Z}}$ be a filtration of ideals in $A$ (i.e., $\F$ is a family of ideals $F_i$ in $A$ indexed by $\mathbb{Z}$ such that $F_i\supseteq F_{i+1}$, $ F_i F_j\subseteq F_{i+j}$ for all $i,j\in\mathbb{Z}$, and $F_0=A$).  Then we consider the following three graded $A$-algebras:
\begin{enumerate}
\item[] $\R(\F)=\sum_{i\ge 0}F_it^i\ \big(\hspace{-0.6mm}\subseteq A[\,t\,]\,\big)$,
\item[] $\R'(\F)=\sum_{i\in\mathbb{Z}}F_it^i\ \big(\hspace{-0.6mm}\subseteq A[\,t, t^{-1}\,]\,\big)$, and
\item[] $\G(\F)=\R'(\F)/t^{-1}\R'(\F)$
\end{enumerate}
and call them respectively the Rees algebra, the extended Rees algebra, and the associated graded ring of $\F$ (see, e.g, \cite[4.5 Filtered rings]{BH} for details). In the case where $\F=\{I^i\}_{i\in\mathbb{Z}}$ for some ideal $I$ in $A$, we shall utilize the alternative notation $\R(I)$, $\R'(I)$, and $\G(I)$. This paper will discuss the relationship between the Gorensteinness of the Rees algebra $\R(\F)$ and the graded canonical module of the extended Rees algebra $\R'(\F)$. 

The relationship of ring-theoretic properties, particularly Cohen-Macaulayness and Gorensteinness, between $\R(\F)$ and $\G(\F)$ has been thoroughly investigated.
For instance, 
in \cite{GS}, Goto and Shimoda showed that the Rees algebra $\R(\m)$ is Cohen-Macaulay (resp. Gorenstein) if and only if the associated graded ring $\G(\m)$ is a Cohen-Macaulay ring and $\ai(\G(\m))<0$ (resp. a Gorenstein ring and $\ai(\G(\m))=-2$), provided the ring $A$ is a Cohen-Macaulay ring with $d\ge 1$ (resp. a Gorenstein ring with $d\ge 2$), where $\ai(\G(\m))$ denotes the $a$-invariant of $\G(\m)$ (see \cite[Definition (3.1.4)]{GW}). Respecting this monumental result, a characterization in the relationship between ring-theoretic properties of the Rees algebra and those of the associated graded ring together with its $a$-invariant is referred to as a characterization {\it of Goto-Shimoda type}.

In \cite{H82}, Huneke showed that Cohen-Macaulayness (resp. Gorensteinness) of the Rees algebra $\R(I)$ yields Cohen-Macaulayness (resp. Gorensteinness) of
the associated graded ring $\G(I)$, provided the ring $A$ is Cohen-Macaulay (resp. the ring $A$ is Gorenstein) and $I$ is an ideal of $A$ with $\height_AI\ge 1$ (resp. $\height_AI\ge 2$). 
The observation in \cite{H82} presents an effective basis for connecting $\R(I)$ and $\G(I)$ through the two short exact sequences
\begin{center}
$
0\to It\hspace{-1mm}\cdot\hspace{-1mm}\R(I)\to\R(I)\to A\to 0 $\quad and\quad $ 0\to It\hspace{-1mm}\cdot\hspace{-1mm}\R(I)(1)\to\R(I)\to \G(I)\to 0
$
\end{center}
of graded $\R(I)$-modules, which have significantly influenced subsequent research.
Their achievements in \cite{GS} and \cite{H82} have become landmarks in the research on Rees algebras and associated graded rings, leading to subsequent publications of characterizations of Goto-Shimoda type.

The results in \cite{GS} and \cite{H82} assume that the base ring $A$ is Cohen-Macaulay, but in general, the Cohen-Macaulay property of the Rees algebra $\R(\F)$ does not necessarily descend to the base ring $A$ (see \cite[Example 2.2]{HR}). 
For research aimed at weakening the assumption that the base ring $A$ is Cohen-Macaulay, Trung and Ikeda provided an outstanding characterization of Goto-Shimoda type for the Cohen-Macaulayness of the Rees algebra of an arbitrary ideal with positive height (see \cite[Theorem 1.1]{TI}). Their result has made the Cohen-Macaulay property of the Rees algebra controllable in many cases, even when the base ring $A$ is not Cohen-Macaulay. We remark that Trung-Ikeda's theorem was enhanced to the case of the Rees algebra of a filtration $\F$ (see \cite[Theorem (1.1)]{GN} and \cite[Theorem 1.1]{Vi}).

For the Gorensteinness of Rees algebras, which is a central theme of the present paper, a landmark result was discovered by Ikeda (\cite[Theorem 3.1]{I}). 
Let us here state his result. Ikeda shows that the following two conditions are equivalent, assuming that $I$ is an ideal of $A$ with $\grade I\ge 2$ and the ring $\R(I)$ is Cohen-Macaulay.
\begin{enumerate}
	\item $\R(I)$ is a Gorenstein ring. \vspace{2mm}
	\item
\begin{enumerate}
\item[{\rm (i)}] Both the rings $A$ and $\G(I)$ are quasi-Gorenstein and 
\item[{\rm (ii)}] $\ai(G(I))=-2$.
\end{enumerate}
\end{enumerate}
\vspace{0mm}
Here, a Noetherian local ring $B$ is said to be {\it quasi-Gorenstein} if $B$ has the canonical module $\K_B$ that is a free $B$-module of rank one. Furthermore, a Noetherian $\Z$-graded ring $S$ with a unique homogeneous maximal ideal $\fkN$ is called {\it quasi-Gorenstein} if $S$ has the graded canonical module $\K_S$ and the local ring $S_\fkN$ is quasi-Gorenstein.
This is equivalent to saying that $S$ has the graded canonical module $\K_S$ such that $\K_S\cong S(k)$ as a graded $S$-module for some $k\in\Z$. We refer the reader to \cite{A}, \cite{BH}, \cite{HIO}, \cite{HK}, \cite{I}, and \cite{S1} for details of the (graded) canonical modules. 

Regarding the Cohen-Macaulayness of the Rees algebra $\R(I)$, which is a necessary condition for its Gorensteinness, we have Trung-Ikeda's criterion as mentioned above. Hence, Ikeda's theorem asserts that the Gorenstein property of $\R(I)$ can be characterized using only information from the rings $A$ and $\G(I)$ (in reality, Trung-Ikeda's criterion was developed after Ikeda's theorem was announced).
Ikeda's theorem was also enhanced to the case of a filtration $\F$ (see \cite[Theorem (1.3)]{GN} and \cite[Theorem 3.1]{TVZ}). The present paper aims to make the enhanced version of Ikeda's theorem as provided by \cite{GN} and \cite{TVZ}, more applicable in certain cases.
To explain the details, let us establish the following notation.
In this section, we assume that $\F$ is a filtration of ideals such that $F_1\subsetneq F_0$ and $\R(\F)$ is a Noetherian ring with $\dim \R(\F)=d+1$. 
Hence, both the rings $\R'(\F)$ and $\G(\F)$ are Noetherian, as $\R'(\F)=\R(\F)[t^{-1}]$ and $\G(\F)=\R'(\F)/t^{-1}\R'(\F)$.
We furthermore assume that the ring $A$ is a homomorphic image of a Gorenstein local ring and denote by $\K_{\R(\F)}$, $\K_{\R'(\F)}$, and $\K_{\G(\F)}$ the graded canonical module of $\R(\F)$, $\R'(\F)$, and $\G(\F)$, respectively (see, e.g., \cite[(1.6)]{I} for the definition of the graded canonical module). 
For each Noetherian $\Z$-graded ring $S$ and for each graded $S$-module $E$, the $i^{\th}$ homogeneous component of $E$ is written by $E_i$, and hence
$S=\bigoplus_{i\in\Z}S_i$ and
$E=\bigoplus_{i\in\Z}E_i$. 
For each integer $n$, we denote by $E(n)$ the graded $S$-module whose graduation is given by $[E(n)]_i=E_{n+i}$ for all $i\in\mathbb{Z}$, and we define
\begin{center}
$E_{\ge n}=\bigoplus_{i\ge n}E_i$,
\end{center}
in particular, we have $\R'(\F)_{\ge 0}=\R(\F)$.
The notation $E_+$ is specially used to denote $E_{\ge 1}$.
It should be noted that $S_{\ge 0}$ is a graded subring of $S$ and $E_{\ge n}$ is a graded $S_{\ge 0}$-submodule of $E$. 
We recall
$$
\ai(\G(\F))=-\min\{i\in\Z\mid [\K_{\G(\F)}]_i\neq 0\},
$$
which is called the $a$-invariant of $\G(\F)$. 

Let us introduce some results stated in \cite{GN} and \cite{TVZ} for comparison with our main theorem. 
In \cite{GN}, Goto and Nishida did not only extend Ikeda's theorem to the case of filtration $\F$ of ideals, but also showed that under a mild assumption, the Gorenstein property of the ring $\R(\F)$ gives rise to 
the remarkable short exact sequence
$$
0\to \G(\F)(-2)\to \K_{\G(\F)}\to \Ext_A^1(A/F_1, A)(-1)\to 0
$$
of graded $\G(\F)$-modules (see \cite[Theorems (1.3) and (4.2)]{GN}).
Meanwhile, in \cite{TVZ}, Trung, Vi\^et, and Zarzuela succeeded in showing that the following two conditions are equivalent, only assuming that $\R(\F)$ is a Cohen-Macaulay ring (\cite[Theorem 1.1]{TVZ}).

\vspace{1mm}
\begin{enumerate}
	\item $\R(\F)$ is a Gorenstein ring. \vspace{2mm}
	\item $[\K_{\G(\F)}]_{\ge 2}\cong\G(\F)(-2)$ as a graded $\G(\F)$-module.
\if0\begin{enumerate}
\item $\R(\F)$ is a Cohen-Macaulay ring and
\item $[\K_{\G(\F)}]_{\ge 2}\cong\G(\F)(-2)$ as a graded $\G(\F)$-module.
\end{enumerate}\fi
\end{enumerate}
\vspace{1mm}
This  Trung-Vi\^et-Zarzuela's theorem represents a significant improvement over Ikeda's theorem
because it is applicable even when $\operatorname{grade} I=1$, while Ikeda's theorem assumed that $\operatorname{grade} I\geq 2$.   
It stands as the most optimal outcome in the realm of Goto-Shimoda type results for Gorenstein Rees algebras.

Although we have obtained satisfactory results concerning the relationship between $\R(\F)$ and $\G(\F)$ as mentioned above, there appears to be a shortage of satisfactory references analyzing the relationship between $\R(\F)$ and $\R'(\F)$ unless the ring $\R'(\F)$ is Cohen-Macaulay.
The purpose of the present paper is to reinforce their results stated in \cite{GN} and \cite{TVZ} by incorporating the terms of the canonical module $\K_{\R'(\F)}$ of the extended Rees algebra $\R'(\F)$. Let us now state the main result in this paper as follows.

\begin{thm}\label{main1}
Assume that $\R(\F)$ is a Cohen-Macaulay ring. Then the following three conditions are equivalent.
\begin{enumerate}
	\item $\R(\F)$ is a Gorenstein ring. \vspace{2mm}
	\item $[\K_{\R'(\F)}]_{\ge 1}\cong\R(\F)(-1)$ as a graded $\R(\F)$-module.\vspace{2mm}
	\item 
\begin{enumerate}
\item[{\rm (i)}] $[\K_{\R'(\F)}]_1\cong A$ as an $A$-module and 
\item[{\rm (ii)}] there exists a monomorphism $\G(\F)(-2)\hookrightarrow \K_{\G(\F)}$ of graded $\G(\F)$-modules. 
\end{enumerate}
\end{enumerate}
When this is the case, $[\K_{\R'(\F)}]_i\cong\K_A$ as an $A$-module for all integers $i\le 0$.
\end{thm}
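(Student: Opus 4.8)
We prove the closing claim, assuming the equivalent conditions (1)--(3) already in force; write $(-)^{\vee}=\Hom_k(-,k)$ for the graded Matlis dual, let $\fkN$ denote the graded maximal ideal of $\R'(\F)$, and recall the graded local duality isomorphisms $\K_{\R'(\F)}\cong\big(\H^{d+1}_{\fkN}(\R'(\F))\big)^{\vee}$ and $\K_{\G(\F)}\cong\big(\H^{d}_{\fkN}(\G(\F))\big)^{\vee}$. The plan is to pin down $\K_{\R'(\F)}$ in degrees $\le 0$ through multiplication by $t^{-1}$. From the exact sequence $0\to\R'(\F)(1)\xrightarrow{t^{-1}}\R'(\F)\to\G(\F)\to 0$ the element $t^{-1}$ is a nonzerodivisor on $\R'(\F)$, hence lies in no minimal prime of $\R'(\F)$; since every associated prime of a graded canonical module is a minimal prime of the ring, $t^{-1}$ is a nonzerodivisor on $\K_{\R'(\F)}$ as well, so for each $i$ multiplication by $t^{-1}$ is an injection $[\K_{\R'(\F)}]_{i}\hookrightarrow[\K_{\R'(\F)}]_{i-1}$. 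Inverting $t^{-1}$ gives $\R'(\F)_{t^{-1}}=A[\,t,t^{-1}\,]$ (as $F_i=A$ for $i\le 0$), and since canonical modules commute with localization, $(\K_{\R'(\F)})_{t^{-1}}\cong\K_{A[\,t,t^{-1}\,]}$; the standard computation ($A[\,t,t^{-1}\,]=A[\,t\,]_{t}$ and $\K_{A[\,t\,]}\cong(\K_A\otimes_A A[\,t\,])(-1)$) shows $[\K_{A[\,t,t^{-1}\,]}]_{i}\cong\K_A$ for all $i\in\Z$, with $t$ inducing isomorphisms between consecutive graded pieces. As $t^{-1}$ is a nonzerodivisor on $\K_{\R'(\F)}$, the localization map $\K_{\R'(\F)}\to(\K_{\R'(\F)})_{t^{-1}}$ is injective; identifying each $[\K_{\R'(\F)}]_{i}$ with its image in $[\K_{A[\,t,t^{-1}\,]}]_{i}\ (\cong\K_A)$ compatibly via the powers of $t^{-1}$, the injections above become an ascending chain of $A$-submodules of $\K_A$ whose union (over $i\to-\infty$) is all of $\K_A$. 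Since $\K_A$ is Noetherian this chain stabilizes, so it suffices to show it is already stationary in degrees $\le 0$: then $[\K_{\R'(\F)}]_{i}=\K_A$ for all $i\le 0$, which is the assertion.

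To see that the chain is stationary in degrees $\le 0$, I would bound its successive quotients by $\K_{\G(\F)}$. Applying $\H^{\bullet}_{\fkN}(-)$ to $0\to\R'(\F)(1)\xrightarrow{t^{-1}}\R'(\F)\to\G(\F)\to 0$ and dualizing — using $\dim\G(\F)=d<d+1=\dim\R'(\F)$ so that $\H^{d+1}_{\fkN}(\G(\F))=0$ — produces an exact sequence
\[
0\longrightarrow\K_{\R'(\F)}\xrightarrow{\ t^{-1}\ }\K_{\R'(\F)}(-1)\longrightarrow\ker\!\Big(\K_{\G(\F)}\longrightarrow\big(\H^{d}_{\fkN}(\R'(\F))\big)^{\vee}\Big)\longrightarrow 0 .
\]
Reading off degree-$i$ parts, the cokernel of $t^{-1}\colon[\K_{\R'(\F)}]_{i}\to[\K_{\R'(\F)}]_{i-1}$ embeds into $[\K_{\G(\F)}]_{i}$. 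Now $[\K_{\G(\F)}]_{i}=0$ for every $i\le 0$, i.e.\ $\ai(\G(\F))\le -1$: this is forced by the Cohen--Macaulayness of $\R(\F)$, e.g.\ by running local cohomology through $0\to\R(\F)_{+}\to\R(\F)\to A\to 0$ and $0\to\R(\F)_{+}(1)\to\R(\F)\to\G(\F)\to 0$, using $\H^{i}_{\fkM}(\R(\F))=0$ for $i\le d$ (with $\fkM$ the graded maximal ideal of $\R(\F)$) together with $\ai(\R(\F))\le -1$; in any case it is among the facts established while proving (1)$\Leftrightarrow$(2)$\Leftrightarrow$(3). Hence for $i\le 0$ the map $t^{-1}\colon[\K_{\R'(\F)}]_{i}\to[\K_{\R'(\F)}]_{i-1}$ is surjective as well as injective, so under the identification above $\{[\K_{\R'(\F)}]_{i}\}_{i\le 0}$ is constant, and together with the first paragraph this yields $[\K_{\R'(\F)}]_{i}\cong\K_A$ for all $i\le 0$.

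The step I expect to be the main obstacle is this last one, precisely because neither $\R'(\F)$ nor $\G(\F)$ is assumed Cohen--Macaulay: one cannot simply identify $\K_{\G(\F)}$ with $\K_{\R'(\F)}(-1)/t^{-1}\K_{\R'(\F)}$, but must carry the deficiency module $\big(\H^{d}_{\fkN}(\R'(\F))\big)^{\vee}$ through the dualized sequence and prove the vanishing $\ai(\G(\F))\le -1$ independently. A secondary point to handle carefully is that $t^{-1}$ is genuinely a nonzerodivisor on $\K_{\R'(\F)}$ and that $(\K_{\R'(\F)})_{t^{-1}}$ really is the full canonical module $\K_{A[\,t,t^{-1}\,]}$ — so that the ascending chain does exhaust $\K_A$ — which is where the Cohen--Macaulayness, hence equidimensionality, of $\R(\F)$ is used.
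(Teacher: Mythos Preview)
Your proof of the closing claim is correct and runs parallel to the paper's approach. The paper encodes your ascending chain as an ``extended canonical $\F$-filtration'' $\w=\{\w_i\}$ of $\K_A$ with $\K_{\R'(\F)}\cong\R'(\w)$ (its Lemma~3.1, proved by exactly the localization-at-$t^{-1}$ argument you give), then shows $\w_j=\K_A$ for all $j<-\ai(\G(\F))$ via the same embedding $(\K_{\R'(\F)}/t^{-1}\K_{\R'(\F)})(-1)\hookrightarrow\K_{\G(\F)}$ that you extract (its Lemma~3.2), and finally invokes $\ai(\G(\F))<0$ from the generalized Trung--Ikeda theorem under the Cohen--Macaulay hypothesis (its Remark~3.5). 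One minor wrinkle: the compatibility ``canonical modules commute with localization'' can carry a degree shift (the paper records this in a Note at the start of Section~3), but for $A[t,t^{-1}]$ every graded piece is $\K_A$ regardless, so your conclusion is unaffected. Note too that neither argument actually uses conditions (1)--(3): the closing claim holds already under Cohen--Macaulayness of $\R(\F)$ alone.

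That said, you have set aside the equivalence $(1)\Leftrightarrow(2)\Leftrightarrow(3)$, which is the heart of the theorem and the part the paper's written proof actually addresses. The paper's route is: rewrite (1) via Trung--Vi\^et--Zarzuela as $[\K_{\G(\F)}]_{\ge 2}\cong\G(\F)(-2)$; prove (Proposition~3.3, the key step) that under the Cohen--Macaulay hypothesis the injection $(\K_{\R'(\F)}/t^{-1}\K_{\R'(\F)})(-1)\hookrightarrow\K_{\G(\F)}$ is in fact an \emph{isomorphism}, using an Artinian stabilization argument on $\H^{d}_{\fkM'}(\R'(\F))$ together with the vanishing $[\H^{d-1}_{\fkM}(\G(\F))]_j=0$ for $j<\ai(\G(\F))$; and then translate between the $\R'(\F)$-side condition (2), the $\G(\F)$-side condition, and the embedding condition (3) via a filtration-level lemma (Proposition~2.5). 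The surjectivity in Proposition~3.3 is precisely the obstacle you anticipate in your last paragraph, and it is what permits passage from (2) or (3) back to the Trung--Vi\^et--Zarzuela condition on $\K_{\G(\F)}$. Your write-up supplies only the injection, so the equivalence remains unproved.
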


The equivalence $(1)\Leftrightarrow(2)$ in Theorem \ref{main1} aims to reinforce the result of Trung, Viet, and Zarzuela by substituting $\K_{\G(\F)}$ with $\K_{\R'(\F)}$.

The reason why there are so few results analyzing the relationship between $\R(\F)$ and $\R'(\F)$
is speculated to be due to the lack of establishment of the similarity of two Huneke's short exact sequences as mentioned above in the category of graded $\R'(\F)$-modules.
Of course, since $\G(\F)=\R'(\F)/t^{-1}\R'(\F)$, we have the short exact sequence
$$0\to \R'(\F)(1) \xrightarrow{t^{-1}}\R'(\F) \to \G(\F)\to 0$$
of graded $\R'(\F)$-modules induced by multiplication by $t^{-1}$ on $\R'(\F)$, but, with only this, the information in $\R'(\F)$ and $\G(\F)$ does not completely correspond when $\R'(\F)$ is not Cohen-Macaulay. For instance, it is routine to check that the short exact sequence above induces the monomorphism 
$$\left(\K_{\R'(\F)}\big/t^{-1}\K_{\R'(\F)}\right)(-1)\hookrightarrow\K_{\G(\F)}
$$ 
of graded $\G(\F)$-modules, but it is not certain whether it is bijective.
It is well-known that, assuming the ring $\R'(\F)$ is Cohen-Macaulay, 
we obtain that it is bijective
(see, e.g., \cite[Corollary 3.6.14]{BH}).
However, such an assumption does not seem to conform with the reinforcement of Trung-Viet-Zarzuela's result mentioned above, because there is a wide gap between the properties of $\R(\F)$ and $\R'(\F)$. For instance, unlike $\R(\F)$, the Cohen-Macaulayness of $\R'(\F)$ can descend to the base ring $A$, since $$\R'(\F)/(t^{-1}-1) \cong A$$ as a ring.
It is preferable not to impose unnecessary restrictions on the base ring $A$.
Theorem \ref{main1} in the present paper does not require $A$ to be a Cohen-Macaulay ring.
To prove Theorem \ref{main1}, we will show that the monomorphism stated above is bijective, provided the Rees algebra $\R(\F)$ is a Cohen-Macaulay ring (see Corollary \ref{maincor}).

Besides, the equivalence $(1)\Leftrightarrow(3)$ in Theorem \ref{main1} seems to render some results in \cite{GN} and \cite{TVZ} more applicable in certain cases.
For example, see the equivalence $(1)\Leftrightarrow(3)$ in the corollary below, which follows as a consequence of Theorem \ref{main1}.

\begin{cor}\label{maincor1} 
Assume that $\R(\F)$ is a Cohen-Macaulay ring with $\grade F_1\ge 2$. Then the following three conditions are equivalent.\vspace{0mm}
\begin{enumerate}
	\item $\R(\F)$ is a Gorenstein ring.
\vspace{2mm}
	\item $\K_{\R'(\F)}\cong\R'(\F)(-1)$ as a graded $\R'(\F)$-module.
\vspace{2mm}
	\item
\begin{enumerate}
\item[{\rm (i)}] $\K_A\cong A$ as an $A$-module,  
\item[{\rm (ii)}] $\ai(\G(\F))=-2$, and 
\item[{\rm (iii)}] there exists a monomorphism $\G(\F)(-2)\hookrightarrow \K_{\G(\F)}$ of graded $\G(\F)$-modules. 
\end{enumerate}
\end{enumerate}
\end{cor}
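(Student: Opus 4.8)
The plan is to read Corollary \ref{maincor1} off Theorem \ref{main1}, using Corollary \ref{maincor} (which gives $\K_{\R'(\F)}/t^{-1}\K_{\R'(\F)}\cong\K_{\G(\F)}(1)$ since $\R(\F)$ is Cohen-Macaulay) together with two facts supplied by the new hypothesis $\grade F_1\ge 2$. First, $t^{-1}$ is a nonzerodivisor on $\K_{\R'(\F)}$: indeed $t^{-1}$ is a nonzerodivisor on $\R'(\F)$, hence lies in no minimal prime of $\R'(\F)$, and $\Ass_{\R'(\F)}\K_{\R'(\F)}\subseteq\Min\R'(\F)$. Second, by Ikeda's theorem in its filtration form \cite[Theorem (1.3)]{GN}, the Gorensteinness of $\R(\F)$ forces $A$ and $\G(\F)$ to be quasi-Gorenstein with $\ai(\G(\F))=-2$. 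I will prove $(2)\Rightarrow(1)$, $(1)\Rightarrow(3)$, $(3)\Rightarrow(1)$, and $(1)\Rightarrow(2)$; the substantial step is $(3)\Rightarrow(1)$.

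The two implications not involving a localization of $\K_{\R'(\F)}$ are quick. For $(2)\Rightarrow(1)$: if $\K_{\R'(\F)}\cong\R'(\F)(-1)$, then restricting to degrees $\ge 1$ yields $[\K_{\R'(\F)}]_{\ge 1}\cong\R(\F)(-1)$ as graded $\R(\F)$-modules, which is condition $(2)$ of Theorem \ref{main1}, so $\R(\F)$ is Gorenstein; this uses neither $\grade F_1\ge 2$ nor Corollary \ref{maincor}. For $(1)\Rightarrow(3)$: since $\R(\F)$ is Cohen-Macaulay with $\grade F_1\ge 2$, Ikeda's theorem gives $\K_A\cong A$ (this is (i)) and $\ai(\G(\F))=-2$ (this is (ii)), while quasi-Gorensteinness of $\G(\F)$ gives $\K_{\G(\F)}\cong\G(\F)(k)$ for some $k$; comparing least nonvanishing degrees ($-k$ on the right, $-\ai(\G(\F))=2$ on the left, using $[\G(\F)]_0=A/F_1\ne 0$) forces $k=-2$, so $\K_{\G(\F)}\cong\G(\F)(-2)$ and in particular there is a monomorphism $\G(\F)(-2)\hookrightarrow\K_{\G(\F)}$, which is (iii).

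For $(3)\Rightarrow(1)$: by Theorem \ref{main1} it suffices to verify condition $(3)$ of that theorem, namely $[\K_{\R'(\F)}]_1\cong A$ together with a monomorphism $\G(\F)(-2)\hookrightarrow\K_{\G(\F)}$; the latter is exactly (iii), so only $[\K_{\R'(\F)}]_1\cong A$ remains. From (ii) we have $\ai(\G(\F))=-2$, so $[\K_{\G(\F)}]_i=0$ for all $i\le 1$, hence by Corollary \ref{maincor} $[\K_{\R'(\F)}/t^{-1}\K_{\R'(\F)}]_i=[\K_{\G(\F)}]_{i+1}=0$ for all $i\le 0$. Since $t^{-1}$ is a nonzerodivisor on $\K_{\R'(\F)}$, multiplication by $t^{-1}$ is therefore an isomorphism $[\K_{\R'(\F)}]_j\xrightarrow{\ \sim\ }[\K_{\R'(\F)}]_{j-1}$ for every $j\le 1$; consequently the degree-$1$ component of $(\K_{\R'(\F)})_{t^{-1}}$, which is the colimit of $[\K_{\R'(\F)}]_1\xrightarrow{t^{-1}}[\K_{\R'(\F)}]_0\xrightarrow{t^{-1}}[\K_{\R'(\F)}]_{-1}\to\cdots$, equals $[\K_{\R'(\F)}]_1$. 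On the other hand $(\R'(\F))_{t^{-1}}=A[\,t,t^{-1}\,]$ and the localization of a graded canonical module is a graded canonical module of the localization, so $(\K_{\R'(\F)})_{t^{-1}}$ is a graded canonical module of $A[\,t,t^{-1}\,]$; this is isomorphic to $\K_A\otimes_A A[\,t,t^{-1}\,]$ up to a degree shift, so each of its graded components is isomorphic to $\K_A$, hence to $A$ by (i). Therefore $[\K_{\R'(\F)}]_1\cong A$, and $\R(\F)$ is Gorenstein. This is the main obstacle: $\R'(\F)$ need not be Cohen-Macaulay, so the graded pieces of $\K_{\R'(\F)}$ in non-positive degrees are not controlled directly; the point is that $t^{-1}$ remains a nonzerodivisor on $\K_{\R'(\F)}$, which lets the condition $\ai(\G(\F))=-2$ propagate through $\K_{\R'(\F)}$ and lets the stable value be identified via $(\K_{\R'(\F)})_{t^{-1}}=\K_{A[\,t,t^{-1}\,]}$.

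Finally, for $(1)\Rightarrow(2)$: assume $\R(\F)$ is Gorenstein. Theorem \ref{main1} gives $[\K_{\R'(\F)}]_{\ge 1}\cong\R(\F)(-1)$, so $[\K_{\R'(\F)}]_{\ge 1}$ is $\R(\F)$-free of rank one on a generator $x$ of degree $1$ with $[\K_{\R'(\F)}]_1=Ax$. By $(1)\Rightarrow(3)$ we have $\ai(\G(\F))=-2$, so exactly as above multiplication by $t^{-1}$ is an isomorphism in each degree $\le 1$; hence $[\K_{\R'(\F)}]_{1-n}=A\,t^{-n}x$ with $t^{-n}x$ a free generator, for every $n\ge 0$. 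Therefore the graded $\R'(\F)$-homomorphism $\R'(\F)(-1)\to\K_{\R'(\F)}$ determined by $1\mapsto x$ is bijective in every degree, so it is an isomorphism, giving $\K_{\R'(\F)}\cong\R'(\F)(-1)$. The same mechanism as in $(3)\Rightarrow(1)$ — nonzerodivisibility of $t^{-1}$ on $\K_{\R'(\F)}$ combined with $\ai(\G(\F))=-2$ — is what produces the global generator of $\K_{\R'(\F)}$ here.
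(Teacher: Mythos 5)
Your proof is correct, and the overall strategy — route everything through Theorem~\ref{main1} together with the link $\K_{\G(\F)}\cong(\K_{\R'(\F)}/t^{-1}\K_{\R'(\F)})(-1)$ from Corollary~\ref{maincor} — is the same structural idea that underlies the paper's argument. What differs is the packaging. The paper proves the corollary in three lines by citing \cite[Corollary 3.4]{TVZ} (to replace Gorensteinness of $\R(\F)$ by $\K_{\G(\F)}\cong\G(\F)(-2)$) and then Theorem~\ref{corqGR'} with $a=-2$ (which already contains the full three-way equivalence between quasi-Gorensteinness of $\R'(\F)$, quasi-Gorensteinness of $\G(\F)$, and the list in condition (3)). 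You instead unpack that machinery by hand: you use \cite[Theorem (1.3)]{GN} for the easy direction $(1)\Rightarrow(3)$, and for $(3)\Rightarrow(1)$ and $(1)\Rightarrow(2)$ you explicitly push the nonzerodivisor $t^{-1}$ through the negative degrees and identify the stable value via $(\K_{\R'(\F)})_{t^{-1}}\cong\K_{A[t,t^{-1}]}$. This is in effect a direct re-derivation of the case $a=-2$ of Proposition~\ref{qGR'}, where the paper instead reaches the same conclusion via Lemma~\ref{-a-1} (which gives $\w_j=\K_A$ for $j<-\ai(\G(\F))$ immediately) combined with Proposition~\ref{embed2}. Both arguments are sound; yours is more self-contained but longer, while the paper's is shorter because it has already abstracted the content of your $t^{-1}$-propagation argument into its lemma infrastructure.

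Two small expository points. First, you describe the nonzerodivisibility of $t^{-1}$ on $\K_{\R'(\F)}$ as ``supplied by the new hypothesis $\grade F_1\ge 2$,'' but this fact needs neither $\grade F_1\ge 2$ nor your argument via $\Ass\K_{\R'(\F)}\subseteq\Min\R'(\F)$: the paper establishes it for an arbitrary filtration directly from the dual of the exact sequence $0\to\R'(\F)(1)\xrightarrow{t^{-1}}\R'(\F)\to\G(\F)\to 0$, in the discussion preceding Lemma~\ref{cf}. Second, your proof of $(1)\Rightarrow(2)$ quietly reuses $(1)\Rightarrow(3)$ to get $\ai(\G(\F))=-2$ before carrying out the $t^{-1}$-propagation; it would be marginally cleaner to observe that once $[\K_{\R'(\F)}]_{\ge 1}\cong\R(\F)(-1)$ and $\ai(\G(\F))=-2$ are both known, one is precisely in the setting of Lemma~\ref{y} applied to the filtration $\w$ (via Proposition~\ref{R'inj} and then the argument at the end of Proposition~\ref{qGR'}), which is exactly what Theorem~\ref{corqGR'} packages.
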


The equivalence $(1)\Leftrightarrow(2)$ in Corollary \ref{maincor1} elucidates the relationship between the Gorenstein property of $\R(\F)$ and a quasi-Gorenstein property of $\R'(\F)$.
The equivalence $(1)\Leftrightarrow(3)$
generalizes the result \cite[Theorem 1.3]{GN} due to Goto and Nishida.

In order to state the next result, let us fix the following notation.
Let $\mathrm{V}(F_1)$ denote the set of prime ideals $\p$ in $A$ such that $F_1\subseteq \p$.
For each $\p\in\mathrm{V}(F_1)$,
we put $\F_\p=\{F_i A_\p\}_{i\in\mathbb{Z}}$, which is a filtration of ideals of $A_\p$, and set $\ell(\F_\p)=\dim\G(\F_\p)/\p\,\G(\F_\p)$, which is called the analytic spread of $\F_\p$. 
Put $$\A(\F)=\bigcup_{i\ge 1}\Ass_AA/F_i.$$ It is known that $\A(\F)=
\Ass_A\G(\F)=\{P\cap A\mid P\in\Ass\G(\F)\}$, which is a finite set (cf. \cite[(3)]{B} and \cite[Lemma 3]{ME}, and see also \cite[Exercise 6.7]{M}), 
and moreover the equality $$\A(\F)=\{\p\in\mathrm{V}(F_1)\mid \dim A_\p=\ell(\F_\p)\}$$ holds when the ring $A$ is quasi-unmixed and 
$\Ass \G(\F)=\Min \G(\F)$
(cf. \cite[Proposition 4.1]{Mc1} and see also \cite[Proof of Lemma 3.1]{GI}). Here, $\Min \G(\F)$ denotes the set of all minimal prime ideals of the ring $\G(\F)$. 
With this notation, we present the following result.

\begin{thm}\label{main2}
Let $k$ be an integer. Assume that $\K_A\cong A$ as an $A$-module. Then the following two conditions are equivalent.\vspace{1mm}
\begin{enumerate}
	\item $\K_{\R'(\F)}\cong\R'(\F)(k)$ as a graded $\R'(\F)$-module.\vspace{2mm}
	\item $\K_{\R'(\F_\p)}\cong\R'(\F_\p)(k)$ as a graded $\R'(\F_\p)$-module for every $\p\in\A(\F)$.
\end{enumerate}
\end{thm}

It should be noted that the condition where $\K_{\R'(\F)}\cong\R'(\F)(k)$ as a graded $\R'(\F)$-module for some integer $k$ implies that $\K_A\cong A$ as an $A$-module.

As a consequence of Theorem \ref{main2} together with Corollary \ref{maincor1},
we obtain the following.

\begin{cor}\label{maincor2}
Assume that $\R(\F)$ is a Cohen-Macaulay ring with $\grade F_1\ge 2$ and
suppose that $\K_A\cong A$ as an $A$-module.
Then the following two conditions are equivalent.\vspace{1mm}
\begin{enumerate}
	\item $\R(\F)$ is a Gorenstein ring.\vspace{2mm}
	\item $\R(\F_\p)$ is a Gorenstein ring for every $\p\in\A(\F)$.
\end{enumerate}
\end{cor}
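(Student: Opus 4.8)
The plan is to derive Corollary~\ref{maincor2} directly from Corollary~\ref{maincor1} and Theorem~\ref{main2}, with the only real work being a compatibility check that the various hypotheses localize and that the localized Rees algebras inherit the Cohen-Macaulay property. First I would observe that since $\R(\F)$ is Cohen-Macaulay, so is $\R'(\F)=\R(\F)[t^{-1}]$, and since $\grade F_1\ge 2$ together with $\K_A\cong A$, Corollary~\ref{maincor1} applies: $\R(\F)$ is Gorenstein if and only if $\K_{\R'(\F)}\cong\R'(\F)(-1)$ as graded $\R'(\F)$-modules. By Theorem~\ref{main2} (with $k=-1$), this last condition is equivalent to $\K_{\R'(\F_\p)}\cong\R'(\F_\p)(-1)$ as a graded $\R'(\F_\p)$-module for every $\p\in\A(\F)$.

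Next I would want to reverse the chain locally, i.e., to identify the condition ``$\K_{\R'(\F_\p)}\cong\R'(\F_\p)(-1)$ as a graded $\R'(\F_\p)$-module'' with ``$\R(\F_\p)$ is Gorenstein'' by invoking Corollary~\ref{maincor1} once more, this time over the local ring $A_\p$ with the filtration $\F_\p$. For this I must verify the hypotheses of Corollary~\ref{maincor1} hold for $(A_\p,\F_\p)$: namely that $\R(\F_\p)$ is Cohen-Macaulay, that $\grade_{A_\p}F_1A_\p\ge 2$, and that $\K_{A_\p}\cong A_\p$. The last is immediate from $\K_A\cong A$ and localization of the canonical module; for $\grade F_1A_\p\ge 2$, since $\p\in\A(\F)=\Ass_A\G(\F)\supseteq\Ass_A A/F_1$ one has $F_1\subseteq\p$, so $F_1A_\p$ is a proper ideal, and grade can only rise under localization only if... actually one must be slightly careful: $\grade$ does not in general go up under localization, but $\grade_{A}F_1\le\grade_{A_\p}F_1A_\p$ holds when $F_1A_\p\neq A_\p$, which is exactly the case here since $F_1\subseteq\p$. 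The Cohen-Macaulayness of $\R(\F_\p)$ follows because $\R(\F_\p)$ is a localization of $\R(\F)$ (at the multiplicative set $A\setminus\p$, after noting $\R(\F)\otimes_A A_\p=\R(\F_\p)$), and Cohen-Macaulayness is preserved under localization. I should also check that $F_1A_\p\subsetneq F_0A_\p=A_\p$ and that $\R(\F_\p)$ is Noetherian of the expected dimension so that the standing assumptions of the section carry over; these are routine since $\R(\F)$ is Noetherian and $\p\in\A(\F)$ forces $\dim A_\p=\ell(\F_\p)$ in the relevant cases, but in general $\R(\F_\p)$ being a localization of a Noetherian ring is Noetherian and the dimension count $\dim\R(\F_\p)=\dim A_\p+1$ needs $A_\p$ equidimensional / the filtration to behave well --- here it is cleanest to note this holds automatically once $\R(\F_\p)$ is Cohen-Macaulay and $t^{-1}$, $t^{-1}-1$ give the dimension drops.

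Assembling these, I get the string of equivalences: $\R(\F)$ Gorenstein $\iff$ $\K_{\R'(\F)}\cong\R'(\F)(-1)$ $\iff$ $\K_{\R'(\F_\p)}\cong\R'(\F_\p)(-1)$ for all $\p\in\A(\F)$ $\iff$ $\R(\F_\p)$ Gorenstein for all $\p\in\A(\F)$, which is precisely the claimed equivalence $(1)\Leftrightarrow(2)$. The main obstacle I anticipate is the second paragraph's bookkeeping: confirming that the hypotheses ``$\R(\F_\p)$ Cohen-Macaulay with $\grade F_1A_\p\ge 2$'' genuinely hold so that Corollary~\ref{maincor1} is applicable over $A_\p$ --- in particular the grade inequality under localization, and the verification that $\R(\F)_{A\setminus\p}=\R(\F_\p)$ as graded rings (using that $A\setminus\p$ consists of degree-zero elements, so localizing commutes with the grading). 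Everything else is a formal concatenation of the two cited results.
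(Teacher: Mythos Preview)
Your proposal is correct and follows exactly the route the paper intends: the paper's own proof is the single sentence ``The required equivalence directly follows from Theorem~\ref{main2} together with Corollary~\ref{maincor1},'' and your chain of equivalences $\R(\F)$ Gorenstein $\Leftrightarrow \K_{\R'(\F)}\cong\R'(\F)(-1) \Leftrightarrow \K_{\R'(\F_\p)}\cong\R'(\F_\p)(-1)$ for all $\p\in\A(\F) \Leftrightarrow \R(\F_\p)$ Gorenstein for all $\p\in\A(\F)$ is precisely what that sentence encodes. Your extra bookkeeping (Cohen--Macaulayness under localization, the grade inequality, $\K_{A_\p}\cong A_\p$) is the routine verification the paper suppresses; one small clean-up is that to see $F_1\subseteq\p$ for $\p\in\A(\F)$ you should argue directly that $\p\in\Ass_A A/F_i$ for some $i\ge1$ gives $F_1^i\subseteq F_i\subseteq\p$, hence $F_1\subseteq\p$, rather than via the inclusion $\A(\F)\supseteq\Ass_A A/F_1$.
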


We note that while a result akin to Corollary \ref{maincor2} is presented in \cite[Proposition 4.1]{HHK1}, the result stated here does not require the base ring $A$ to be Cohen-Macaulay.

In the rest of this section, we will focus on the Gorensteinness of the Rees algebra $\R(I^n)$ of powers of an ideal $I$ for some positive integer $n$.
The relationship between $\R(I^n)$ and $\G(I)$ is profoundly explored (see, e.g., \cite[Theorem (2.3)]{HRS},
\cite[Theorem (3.5)]{HRZ},
\cite[Theorem 3.12 and Remark 4.9]{HHR}, and
\cite[Theorem 3.1]{HHK1}). 
These results relies on a mild assumption about the ring $\R(I)$. For instance, it is assumed that $\R(I)$ is Cohen-Macaulay or, more generally, that $[\H^{d}_{\fkM}(\R(I))]_i=0$ for all integers $i\le -n$. Here, $\H^{d}_{\fkM}(\R(I))$ denotes the $d^{\th}$ graded local cohomology module of $\R(I)$ with respect to the homogeneous maximal ideal $\fkM$ in $\R(I)$. Our subsequent findings regarding the connection between $\R(I^n)$ and $\R'(I)$ do not require such an assumption about the ring $\R(I)$.
To state our result, let us set up some notation.
For each regular ideal $J$ of $A$ (i.e., $J$ contains a nonzero divisor on $A$), we define\vspace{-1mm} 
$$J^\ast=\bigcup_{i\ge 0}J^{i+1}:_AJ^i\vspace{-1mm},$$
which is called the Ratliff-Rush closure of $J$, given by \cite{RR}.
Let $I$ be a regular ideal in $A$ such that $I\neq A$. We set $I^i=A$ for every integer $i\le 0$. 
In what follows, we consider the case where $\F=\{I^i\}_{i\in\mathbb{Z}}$, and then we define $$\F^\ast=\{(I^i)^\ast\}_{i\in\mathbb{Z}},$$ which forms a filtration of ideals in $A$. With this notation, we can state the following theorem, which clarifies the relationship between the Gorenstein Rees algebra $\R(I^n)$ of powers $I^n$ of $I$ and the graded canonical module $\K_{\R'(I)}$ of $\R'(I)$ of an ordinary ideal $I$.

\begin{thm}\label{in}
Let $n$ be an positive integer and assume that $\R(I^n)$ is a Cohen-Macaulay ring. Then the following three conditions are equivalent.
\begin{enumerate}
\item $\R(I^n)$ is a Gorenstein ring.\vspace{2mm}
\item $[\K_{\R'(I)}]_{\ge n}\cong\R(\F^\ast)(-n)$ as a graded $\R(I)$-module.
\vspace{2mm}
\item $[\K_{\R'(\F^\ast)}]_{\ge n}\cong\R(\F^\ast)(-n)$ as a graded $\R(\F^\ast)$-module..\vspace{1mm}
\end{enumerate}
When this is the case, there exists a monomorphism $\G(\F^\ast)(-n-1)\hookrightarrow \K_{\G(\F^\ast)}$ of graded $\G(\F^\ast)$-modules.
\end{thm}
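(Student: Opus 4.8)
The plan is to reduce Theorem \ref{in} to Theorem \ref{main1} applied to the filtration $\F^\ast$, after first replacing the ideal $I^n$ with an appropriate filtration whose Rees algebra coincides (up to Veronese) with $\R(I^n)$. The key observation is that the Ratliff-Rush closure does not disturb the relevant cohomological data: since $\R(I^n)$ is Cohen-Macaulay, one has $(I^n)^\ast=I^n$ for large powers in a controlled way, and more precisely $\R'(\F^\ast)$ is the $S_2$-ification of $\R'(I)$ inside $A[t,t^{-1}]$, so that $[\K_{\R'(I)}]_{\ge n}=[\K_{\R'(\F^\ast)}]_{\ge n}$ as graded $\R(\F^\ast)$-modules. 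This will give the equivalence $(2)\Leftrightarrow(3)$ essentially for free, and reduces everything to understanding $\R(\F^\ast)$.

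Next I would set $\mathcal{G}=\{(I^{ni})^\ast\}_{i\in\Z}$, the filtration obtained by taking the $n$-th Veronese of $\F^\ast$. One checks $\R(\mathcal{G})$ is the $n$-th Veronese subring of $\R(\F^\ast)$, and — because $\R(I^n)$ is Cohen-Macaulay, hence Ratliff-Rush closed in each degree — in fact $\R(\mathcal{G})=\R((I^n)^\ast\text{-filtration})$ has the same Cohen-Macaulayness, and its Gorensteinness is equivalent to that of $\R(I^n)$ by a standard Veronese/direct-summand argument (Gorenstein descends to and ascends from the Veronese here since $n$ is invertible in the relevant sense, or more safely via the explicit form of the canonical module of a Veronese). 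Applying Theorem \ref{main1} to $\mathcal{G}$ yields: $\R(I^n)$ is Gorenstein $\iff [\K_{\R'(\mathcal{G})}]_{\ge 1}\cong\R(\mathcal{G})(-1)$. The final step is to unwind the Veronese: $[\K_{\R'(\mathcal{G})}]_{\ge 1}$ is the $n$-th Veronese strand of $[\K_{\R'(\F^\ast)}]_{\ge n}$, and the isomorphism $[\K_{\R'(\mathcal{G})}]_{\ge 1}\cong\R(\mathcal{G})(-1)$ lifts to $[\K_{\R'(\F^\ast)}]_{\ge n}\cong\R(\F^\ast)(-n)$ because the intermediate graded pieces $[\K_{\R'(\F^\ast)}]_{j}$ for $n\le j$ not divisible by $n$ are determined by the module structure over $\R(\F^\ast)$ and torsion-freeness of the canonical module over the $S_2$ ring $\R(\F^\ast)$. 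This establishes $(1)\Leftrightarrow(3)$, and combined with the first paragraph, $(1)\Leftrightarrow(2)$.

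For the final assertion about the monomorphism $\G(\F^\ast)(-n-1)\hookrightarrow\K_{\G(\F^\ast)}$: assuming the equivalent conditions hold, I would apply the analogous conclusion of Theorem \ref{main1}(3)(ii) to the Veronese filtration $\mathcal{G}$, getting $\G(\mathcal{G})(-2)\hookrightarrow\K_{\G(\mathcal{G})}$, and then pull this back along the surjection $\G(\F^\ast)\twoheadrightarrow\G(\mathcal{G})$-relationship. More directly: the short exact sequence $0\to\R'(\F^\ast)(1)\xrightarrow{t^{-1}}\R'(\F^\ast)\to\G(\F^\ast)\to 0$ together with $[\K_{\R'(\F^\ast)}]_{\ge n}\cong\R(\F^\ast)(-n)$ produces, after applying the canonical-module functor and taking the degree-shift into account, a monomorphism $(\R(\F^\ast)/t^{-1}\text{-image})(-n-1)\hookrightarrow\K_{\G(\F^\ast)}$; identifying the source with $\G(\F^\ast)(-n-1)$ up to the Ratliff-Rush issue (which is harmless since $\G(\F^\ast)=\R'(\F^\ast)/t^{-1}\R'(\F^\ast)$ by definition) finishes it.

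The main obstacle I anticipate is the Veronese bookkeeping in the second paragraph: carefully matching graded canonical modules of a filtration and its $n$-th Veronese, especially keeping track of the degree shifts and verifying that $\R(\F^\ast)$ inherits enough of the $S_2$/Cohen-Macaulay behavior from $\R(I^n)$ being Cohen-Macaulay to make the canonical module $[\K_{\R'(\F^\ast)}]_{\ge n}$ behave well over $\R(\F^\ast)$ rather than only over its Veronese subring. Establishing that $\R'(\F^\ast)$ is precisely the $S_2$-ification of $\R'(I)$ — so that the two canonical modules agree in high degrees — is the technical heart and will likely require a separate lemma invoking \cite{RR} and the theory developed in the body of the paper (presumably the analogue of Corollary \ref{maincor}).
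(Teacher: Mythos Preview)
Your overall architecture matches the paper's: apply Theorem~\ref{main1} to the filtration $\{I^{ni}\}$ to get $\R(I^n)$ Gorenstein $\iff[\K_{\R'(I^n)}]_{\ge 1}\cong\R(I^n)(-1)$, and then pass between this Veronese statement and conditions (2) and (3) concerning $\R'(I)$ and $\R'(\F^\ast)$. The paper packages the Veronese passage as Proposition~\ref{vero} (with $b=-2$), and the last assertion as Lemma~\ref{congG}(2). So structurally you are on target, and the equivalence $(2)\Leftrightarrow(3)$ does indeed come from $\K_{\R'(I)}\cong\K_{\R'(\F^\ast)}$ --- though the paper obtains this isomorphism simply from $\dim\big(\R'(\F^\ast)/\R'(I)\big)<d$, not from any $S_2$-ification statement (which you would have to prove separately and which is not needed).

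The genuine gap is the step you flag yourself: lifting the Veronese isomorphism $[\K_{\R'(I^n)}]_{\ge 1}\cong\R(I^n)(-1)$ to the full isomorphism $[\K_{\R'(\F^\ast)}]_{\ge n}\cong\R(\F^\ast)(-n)$. Your proposed mechanism --- that the intermediate pieces are ``determined by the module structure over $\R(\F^\ast)$ and torsion-freeness'' --- does not do the job: an isomorphism on a Veronese strand does not in general extend to the whole graded module, and torsion-freeness over an $S_2$ ring is not enough to pin down the missing degrees. The paper fills this gap by an explicit computation (Lemma~\ref{rr}): writing $\K_{\R'(I)}\cong\R'(\w)$ for an extended canonical $\F$-filtration $\w$, the Veronese hypothesis yields $y\in\K_A$ with $0:_Ay=0$ and $\w_{ni}=I^{n(i-1)}y$ for $i\ge 1$; one then proves directly that $\w_j=(I^{j-n})^\ast y$ for every $j\ge n$. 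The inclusion $\w_j\subseteq(I^{j-n})^\ast y$ comes from $I^i\w_j\subseteq\w_{i+j}=I^{i+j-n}y$ for $i\gg 0$, so $\w_j\subseteq(I^{i+j-n}:_AI^i)y$, which is the defining colon for the Ratliff-Rush closure; the reverse inclusion uses that $\w$ is already an $\F^\ast$-filtration (Lemma~\ref{ast}). In other words, the reason $\F^\ast$ appears in conditions (2) and (3) is not an $S_2$-ification phenomenon but precisely this colon-ideal computation, and that computation is the technical heart you still need to supply.
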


For the last assertion in Theorem \ref{in}, we emphasize that Herrmann, Hyry, and Ribbe are the first to reveal that the Gorenstein Rees algebra $\R(I^n)$ provides an embedding of $\G(\F^\ast)(-n-1)$ into the canonical module $\K_{\G(I)}$ of graded $\G(I)$-modules (cf. \cite[Theorem 3.9 (3)]{HHR}, and see also Remark \ref{injast}). Hence, the last assertion in Theorem \ref{in} is a slight modification of their result.

When $\grade I\ge 2$, it is observed that the Gorenstein property of $\R(I^n)$ is equivalent to saying that the ring $\R'(\F^\ast)$ is quasi-Gorenstein, as follows.

\begin{cor}\label{incor}
Suppose that $\grade I\ge 2$.
Assume that $\R(I^n)$ is a Cohen-Macaulay ring for some integer $n>0$. Then the following two conditions are equivalent.
\begin{enumerate}
\item $\R(I^n)$ is a Gorenstein ring.\vspace{2mm}
\item $\K_{\R'(I)}\cong\R'(\F^\ast)(-n)$ as a graded $\R'(I)$-module.
\vspace{2mm} \item $\K_{\R'(\F^\ast)}\cong\R'(\F^\ast)(-n)$ as a graded $\R'(\F^\ast)$-module.\vspace{1mm}
\end{enumerate}
When this is the case, the equality $\mathrm{a}(\G(I))=-n-1$ holds true.
\end{cor}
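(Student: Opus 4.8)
\textbf{Proof proposal for Corollary \ref{incor}.}
The plan is to deduce this corollary from Theorem \ref{in} by exploiting the hypothesis $\grade I\ge 2$, which forces $\grade F^\ast_1\ge 2$ as well (since $F^\ast_1=(I)^\ast\supseteq I$ has the same radical as $I$, or at least contains a nonzerodivisor lying in a depth-$2$ situation). The key point is that passing from the truncated isomorphism $[\K_{\R'(\F^\ast)}]_{\ge n}\cong\R(\F^\ast)(-n)$ to the full isomorphism $\K_{\R'(\F^\ast)}\cong\R'(\F^\ast)(-n)$ should be governed by exactly the same mechanism used in the passage from Corollary \ref{maincor1}(1)$\Leftrightarrow$(2) out of the truncated statement in Theorem \ref{main1}; indeed Corollary \ref{incor} is to Theorem \ref{in} what Corollary \ref{maincor1} is to Theorem \ref{main1}. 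So first I would record that $(1)\Leftrightarrow$ [Theorem \ref{in}(3)], i.e. $[\K_{\R'(\F^\ast)}]_{\ge n}\cong\R(\F^\ast)(-n)$, and similarly $(1)\Leftrightarrow$ [Theorem \ref{in}(2)].

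Next I would show that, under $\grade I\ge 2$, the truncated isomorphism of (3) in Theorem \ref{in} upgrades to the claimed isomorphism $\K_{\R'(\F^\ast)}\cong\R'(\F^\ast)(-n)$ of statement (2) of the corollary. The mechanism: $\K_{\R'(\F^\ast)}$ is a rank-one reflexive (indeed $S_2$) graded module over the Cohen–Macaulay-in-the-relevant-sense ring, so it is determined by its behavior in large degrees; the grade-$\ge 2$ hypothesis kills the obstruction coming from the components of degree $<n$. Concretely, I expect to argue that $\K_{\R'(\F^\ast)}$ and $\R'(\F^\ast)(-n)$ are both graded submodules of the common ambient object $\K_{\R'(\F^\ast)}\otimes_A \mathrm{Frac}$ (or use that $t^{-1}$ and an element of $I$ form a regular sequence to control low-degree pieces), agree in all degrees $\ge n$ by Theorem \ref{in}(3), and that each is $S_2$; two $S_2$ graded modules over an $S_2$ ring that agree after truncation and have the right depth along the relevant height-$1$ primes must coincide. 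The parenthetical ``when this is the case'' conclusion about $\R'(I)$ (statement (2) of the corollary, the version with $\K_{\R'(I)}$ rather than $\K_{\R'(\F^\ast)}$) is obtained the same way starting from Theorem \ref{in}(2), noting that $[\K_{\R'(I)}]_i$ for $i<n$ is controlled once $\grade I\ge2$ forces $[\K_{\R'(I)}]_i\cong\K_A$ for $i\le 0$ (the final sentence of Theorem \ref{main1}) and the intermediate degrees are pinned down by the $\R(\F^\ast)$-module structure.

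Finally, for the $a$-invariant assertion $\mathrm{a}(\G(I))=-n-1$: by the last sentence of Theorem \ref{in} there is a monomorphism $\G(\F^\ast)(-n-1)\hookrightarrow\K_{\G(\F^\ast)}$, so $[\K_{\G(\F^\ast)}]_{n+1}\neq 0$, giving $\mathrm{a}(\G(\F^\ast))\ge -n-1$. For the reverse inequality I would use that the Gorensteinness of $\R((\F^\ast)^{(n)})=\R(I^n)$ (after identifying Ratliff–Rush closures, which do not change the Rees algebra of $I^n$ for large powers, hence do not change $\R(I^n)$ up to the relevant normalization) combined with Cohen-Macaulayness forces $[\K_{\G(\F^\ast)}]_i=0$ for $i\le n$, via the short exact sequence $0\to\R'(\F^\ast)(1)\xrightarrow{t^{-1}}\R'(\F^\ast)\to\G(\F^\ast)\to 0$ dualized into $\K$'s together with statement (3): truncating at degree $\ge n$ the isomorphism $\K_{\R'(\F^\ast)}\cong\R'(\F^\ast)(-n)$ shows the cokernel computing $\K_{\G(\F^\ast)}$ in degrees $\le n$ vanishes. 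Then $\grade I\ge 2$ lets one transfer the $a$-invariant from $\G(\F^\ast)$ to $\G(I)$: since $\grade I\ge2$ the natural surjection $\G(I)\to\G(\F^\ast)$... actually the inclusion of filtrations $\F\subseteq\F^\ast$ induces a map whose kernel/cokernel is supported in dimension $\le d-2$, so the top graded local cohomology, hence the $a$-invariant, is unchanged. The main obstacle I anticipate is precisely this last transfer — showing $\mathrm{a}(\G(I))=\mathrm{a}(\G(\F^\ast))$ under $\grade I\ge2$ — and the truncation-to-full upgrade of the canonical module isomorphism; both hinge on a careful depth/$S_2$ bookkeeping over the (possibly non-Cohen–Macaulay) base, which is the technical heart also of Theorem \ref{main1} and Corollary \ref{maincor1}.
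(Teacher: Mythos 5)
Your plan diverges from the paper's route and has genuine gaps at both places you flagged as obstacles. The paper proves Corollary \ref{incor} in two lines: by Corollary \ref{maincor1} (applied to the ideal $I^n$, where $\grade I^n\ge 2$) condition (1) is equivalent to $\K_{\R'(I^n)}\cong\R'(I^n)(-1)$, and then Theorem \ref{qGR'b} with $b=-2$ gives the remaining equivalences and the $a$-invariant formula. So the real engine is Theorem \ref{qGR'b}, which in turn rests on Lemma \ref{equal} (pinning down $\ai'(I^n)$ and $\ai'(I)$ via Proposition \ref{formula}), not on the $S_2$/reflexivity bookkeeping you sketch.

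The first gap is in your truncation-to-full upgrade. You propose to treat $\K_{\R'(\F^\ast)}$ and $\R'(\F^\ast)(-n)$ as two $S_2$ graded modules over an $S_2$ ring agreeing in high degrees. But $\R'(\F^\ast)$ is not known to satisfy $(S_2)$ a priori: that property is precisely a consequence of the quasi-Gorenstein isomorphism you are trying to establish, not an input to it. Likewise, your suggestion that the same mechanism as in Corollary \ref{maincor1} applies to $\F^\ast$ runs into the problem that Corollary \ref{maincor1} requires $\R(\F^\ast)$ itself to be Cohen--Macaulay, whereas the hypothesis here only makes $\R(I^n)=\R((\F^\ast)^{(n)})$ Cohen--Macaulay. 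The paper circumvents both issues constructively: Lemma \ref{rr} produces an explicit element $y\in\K_A$ with $\w_i=(I^{i-n})^\ast y$ in high degrees, Proposition \ref{formula} forces $\ai'(I)=-n-1$, and Lemma \ref{equal} shows $\K_A=\w_n=Ay$, after which the isomorphism is built by hand. No $(S_2)$ property of $\R'(\F^\ast)$ is invoked.

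The second gap is in your $a$-invariant argument. You want to transfer $\ai(\G(\F^\ast))$ to $\ai(\G(I))$ by arguing that the inclusion $\F\subseteq\F^\ast$ induces a map $\G(I)\to\G(\F^\ast)$ with kernel and cokernel in small dimension. That inference does not follow from $\dim\big(\R'(\F^\ast)/\R'(I)\big)<d$: the element $t^{-1}$ need not be regular on the quotient $\R'(\F^\ast)/\R'(I)$, so the snake diagram you would need does not control $\ker$ and $\coker$ of $\G(I)\to\G(\F^\ast)$, and there is no a priori reason these have dimension $<d$. Moreover, even if the $a$-invariants agreed, you would still need the reverse inequality $\ai(\G(\F^\ast))\le -n-1$, and your proposed vanishing argument via the truncated sequence is not fleshed out. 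The paper instead proves $\ai(\G(I))=\ai'(I)$ directly: it combines $\ai(\G(I^n))=\big[\ai(\G(I))/n\big]$ from Hoa--Zarzuela with $\ai'(I)+1=n(\ai'(I^n)+1)$ (Proposition \ref{formula}) and $\ai'\le\ai(\G(-))$ (Lemma \ref{-a-1}), squeezing the two quantities to be equal. This bypasses any comparison between $\G(I)$ and $\G(\F^\ast)$ entirely.
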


The last assertion in Corollary \ref{incor} is a generalization of the result due to
\cite[Corollary 3.10]{HHR} in the case where $\grade I\ge 2$. 
Their theorem relies on the assumption that $\H_\fkM^d(\R(I))=0$ and their proof does not work without this assumption.

As an illustration of the aforementioned results, let us consider the following example, which addresses the existence of an ideal $J$ in $A$ such that the Rees algebra $\R(J)$ is Gorenstein.
To state it, let us set up the following notation. 
In the rest of this section, 
assume that $d\ge 3$ and $\dim A/\p=d$ for every $\p\in\Ass A$. 
Put 
$\fka (A)=\prod_{i=0}^{d-1} 0:_A\H_\m^i(A)$, where $\H_\m^i(A)$ denotes the $i^{\th}$ local cohomology modules of $A$ with respect to $\m$, and set
$\NCM (A)=\{\p\in\Spec A\mid A_\p$ is not Cohen-Macaulay$\}$.
Then $\NCM (A)=\mathrm{V}(\fka (A))$ by \cite[Theorem 3]{S}. Suppose that $\dim \NCM (A)\le 1$ and there exists a system $x_1, x_2,\dots , x_d$ of parameters for $A$ such that $x_2, x_3, \dots, x_d \in \fka (A)$, and we consider the ideal\vspace{0mm} $$I=\bigcup_{i\ge 1}(x_2, x_3, \dots, x_d):x_1^i.\vspace{0mm}$$ Then, Kawasaki showed that the Rees algebra $\R(I^n)$ is Cohen-Macaulay for every integer $n\ge d-2$ in \cite[Theorems 2.4 and 3.1]{K2}. 
With this notation, the final result in this section can be stated as follows.

\begin{example}[cf. \cite{K2}]\label{qGex}
Assume that $\K_A\cong A$ as an $A$-module and $\dim \NCM (A)\le 0$. Then the following three assertions hold true.
\begin{enumerate}
	\item $\grade \G(I)_+>0$, and hence ${(I^i)}^\ast=I^i$ for all integers $i\in\Z$. \vspace{1mm}
	\item $\ai(\G(I))=-d+1$ and $\K_{\R'(I)}\cong\R'(I)(-d+2)$ as a graded $\R'(I)$-module.
Hence, choosing two positive integers $m$ and $n$ such that $mn=d-2$, one obtains that $\K_{\R'(I^n)}\cong\R'(I^n)(-m)$ as a graded $\R'(I^n)$-module. \vspace{1mm}
	\item Let $n$ be a positive integer.
Then,
\begin{center}
$\R(I^n)$ is a Gorenstein ring $\iff n= d-2.$\hspace{2.3cm}\,
\end{center}
\end{enumerate}
\end{example}

When the ring $A$ is generalized Cohen-Macaulay such that $\K_A\cong A$ as an $A$-module, the assertion (2) above demonstrates that Kawasaki's ideal $I$ leads to a quasi-Gorenstein extended Rees algebra $\R'(I)$. Moreover, the assertion (3) illustrates the existence of an ideal $J$ in $A$ such that the Rees algebra $\R(J)$ is Gorenstein.

To conclude this section, let us explain how to organize this paper.
The proofs of Theorems \ref{main1} and \ref{main2}, as well as Corollaries \ref{maincor1} and \ref{maincor2}, will be provided in Section 3. Section 2 summarizes some preliminary results on $\F$-filtrations, which we need to prove the aforementioned results. Section 4 is  devoted to investigating the Rees algebras of powers of ideals and gives the proofs of Theorem \ref{in}, Corollary \ref{incor}, and Example \ref{qGex}.

\section{Preliminary results on $\F$-filtrations}

Throughout this section, let $A$ be a commutative ring and $\F=\{F_n\}_{n\in\Z}$ a filtration of ideals of $A$ (i.e., $\F$ is a family of ideals $F_i$ in $A$ indexed by $\mathbb{Z}$ such that $F_i\supseteq F_{i+1}$, $ F_i F_j\subseteq F_{i+j}$ for all $i,j\in\mathbb{Z}$, and $F_0=A$). We consider the rings $\R(\F)$, $\R'(\F)$, and $\G(\F)$ as in Section 1. 
Let $M$ be an $A$-module and
let $\M=\{M_i\}_{i\in\Z}$ stand for a family of $A$-submodules of $M$ indexed by $\Z$. We say that
$\M$ is {\it an $\F$-filtration of $M$}, if the following three conditions are satisfied:
\begin{enumerate}
	\item [(i)] $M_{i} \supseteq M_{i+1}$ for all $i\in \Z$.
	\item [(ii)] $F_iM_j \subseteq M_{i+j}$ for all $i, j \in \Z$.
	\item [(iii)] $M_i=M$ for all integers $i\ll 0$.
\end{enumerate}

This section will summarize some preliminary results on $\F$-filtrations, which we need to prove the results in Section 1.
Let $t$ be an indeterminate over $A$. We denote by $A[t,t^{-1}]$ the graded Laurent polynomial ring in one variable $t$ of $\deg t=1$  and $\deg a=0$ for all $a\in A\setminus\{0 \}$.
For each $\F$-filtration $\M$ of $M$, we define
$$
\R'(\M) = \sum_{i\in\Z}At^i\otimes_A M_i\ \big(\subseteq A[t,t^{-1}]\otimes_A M
\big),$$
which is a graded $\R'(\F)$-submodule of $A[t,t^{-1}]\otimes_A M$,
and call it the extended Rees module of $\M$. 
Furthermore, we put 
$$
\G(\M) = \R'(\M)/t^{-1}\R'(\M),
$$
which is a graded $\G(\F)$-module,
and call it the associated graded module of $\M$. 
We notice that, for each $n\in\Z$, $\R'(\M)_{\ge n}$ is a graded $\R(\F)$-submodule of $\R'(\M)$. We specially set $\R(\M)=\R'(\M)_{\ge 0}$ and call it the Rees module of $\M$.
Put $$T=\{(t^{-1})^{n}\mid n\in\Z,\ n\ge 0\},$$ which is the multiplicatively closed subset of $\R'(\F)$.
Let us begin with the following.

\begin{lem}\label{filtration}
Let $W$ be a finitely generated $A$-module and let $\calK$ be a graded $\R'(\F)$-module such that $t^{-1}$ is a nonzero divisor on $\calK$. Assume that $T^{-1}\calK\cong A[t,t^{-1}]\otimes_A W$ as a graded $T^{-1}\R'(\F)$-module.
Then there exists an $\F$-filtration $$\w=\{\w_i\}_{i\in\Z}$$ of $W$ such that $\calK\cong \R'(\w)$ as a graded $\R'(\F)$-module.    
\end{lem}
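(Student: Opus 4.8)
The idea is to recover the filtration $\w$ directly from the homogeneous components of $\calK$ by transporting the given isomorphism $T^{-1}\calK\cong A[t,t^{-1}]\otimes_A W$ through the localization map. First I would note that since $t^{-1}$ is a nonzero divisor on $\calK$, the natural map $\calK\to T^{-1}\calK$ is injective and homogeneous of degree $0$, so we may regard each homogeneous piece $\calK_i$ as an $A$-submodule of $[T^{-1}\calK]_i$. Now $T^{-1}\R'(\F)=A[t,t^{-1}]$ (localizing at $t^{-1}$ inverts $t$, and every $F_i t^i$ with $i<0$ already lies in $\R'(\F)$ while for $i>0$ we use $F_i t^i=(t^{-1})^{-i}\cdot(\text{something})$... more carefully, $T^{-1}\R'(\F)=A[t,t^{-1}]$ because $t=t^{-1}\cdot(t^2)$ and $t^2\in\R'(\F)$ only if $F_2\ni 1$; the correct statement is that $t^{-1}$ acts invertibly and $F_i=A$ for $i\ll 0$, so all of $t^{\pm 1}$ become available after localization). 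Under the hypothesis, $[T^{-1}\calK]_i\cong t^i\otimes_A W\cong W$ as $A$-modules, compatibly with the grading, so define
\[
\w_i=\{\,w\in W \mid t^i\otimes w\in \calK_i\subseteq [T^{-1}\calK]_i\,\}
\]
using the fixed isomorphism to identify $[T^{-1}\calK]_i$ with $t^i\otimes_A W$.

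Next I would verify that $\w=\{\w_i\}_{i\in\Z}$ is an $\F$-filtration of $W$. The descending condition $\w_i\supseteq\w_{i+1}$ follows because multiplication by $t^{-1}$ sends $\calK_{i+1}$ into $\calK_i$ and, under the identification, corresponds to the identity on $W$ at the level of the coefficient module. The condition $F_i\w_j\subseteq\w_{i+j}$ follows because $\calK$ is a graded $\R'(\F)$-module, so $F_it^i\cdot\calK_j\subseteq\calK_{i+j}$, and $F_it^i$ acts on the coefficient $W$ as multiplication by $F_i$. The stabilization condition $\w_i=W$ for $i\ll 0$ needs the observation that $T^{-1}\calK=\calK[{(t^{-1})}^{-1}]$ together with finite generation: choosing generators of $\calK$ over $\R'(\F)$, each has degree bounded below, so for $i$ sufficiently negative every element $t^i\otimes w$ of $[T^{-1}\calK]_i$ is already in the image of $\calK_i$; hence $\w_i=W$.

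Finally I would produce the isomorphism $\calK\cong\R'(\w)$. By construction $\R'(\w)=\sum_{i\in\Z}t^i\otimes_A\w_i\subseteq A[t,t^{-1}]\otimes_A W=T^{-1}\calK$, and by the definition of $\w_i$ this submodule coincides degree-by-degree with the image of $\calK$ in $T^{-1}\calK$. Since $\calK\hookrightarrow T^{-1}\calK$ is injective (as $t^{-1}$ is a nonzero divisor) with image exactly $\R'(\w)$, the induced map $\calK\to\R'(\w)$ is a graded $\R'(\F)$-isomorphism.

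I expect the main obstacle to be the stabilization condition (iii): one must argue carefully that the isomorphism on localizations, which a priori only identifies $T^{-1}\calK$ with $A[t,t^{-1}]\otimes_AW$ \emph{after} inverting $t^{-1}$, can be pulled back to show $\calK_i$ surjects onto $W$ for $i\ll 0$. The key point is that $T^{-1}\calK$ is obtained from $\calK$ by inverting the single element $t^{-1}$, so each element of $T^{-1}\calK$ has the form $(t^{-1})^{-m}\cdot\xi$ with $\xi\in\calK$ and $m\ge 0$; combined with the fact that $\calK$ is generated in degrees $\ge -N$ for some $N$ (finite generation over the Noetherian—here merely commutative, but generated—ring $\R'(\F)$, or rather using that $W$ is finitely generated to bound things), one gets $\calK_i=[T^{-1}\calK]_i$ for all $i\le -N$. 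A secondary subtlety is checking that all the identifications are genuinely homogeneous of degree zero and $A$-linear, so that the module structure on $\w$ inherited from $\calK$ is the honest one; this is routine but should be stated.
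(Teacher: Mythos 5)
Your construction of $\omega_i$ is exactly the paper's: embed $\calK\hookrightarrow T^{-1}\calK\cong A[t,t^{-1}]\otimes_A W$ via the localization map (injective because $t^{-1}$ is a nonzero divisor), set $\omega_i=\{w\in W\mid t^i\otimes w\text{ lies in the image of }\calK\}$, and observe that the image is precisely $\R'(\omega)$. The verifications that $\omega_{i+1}\subseteq\omega_i$ (apply $t^{-1}\in\R'(\F)$) and that $F_i\omega_j\subseteq\omega_{i+j}$ (apply $F_it^i\subseteq\R'(\F)$) are correct and match the paper.

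The one place you should be careful is condition (iii), the stabilization $\omega_i=W$ for $i\ll 0$. Your primary argument chooses a finite homogeneous generating set for $\calK$ over $\R'(\F)$ and uses that its degrees are bounded below; but the lemma does \emph{not} assume $\calK$ is finitely generated, so this step is not available as stated (and without it, $\calK$ could a priori have minimal generators in arbitrarily negative degrees). Your parenthetical ``or rather using that $W$ is finitely generated to bound things'' is the right instinct, and it is the route the paper actually takes: for each $x\in W$, the element $1\otimes x$ of $T^{-1}\calK$ can be cleared to $(t^{-1})^n(1\otimes x)=t^{-n}\otimes x$ lying in the image of $\calK$, so $x\in\omega_{-n}$; since $W$ is finitely generated by, say, $x_1,\dots,x_m$, taking $n=\max n_j$ and using that each $\omega_i$ is an $A$-submodule and the family is descending gives $\omega_{-n}=W$. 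You should replace the ``generators of $\calK$'' argument by this one; once that is done the proof is complete and coincides with the paper's.
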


\begin{proof}
Since $t^{-1}$ is a nonzero divisor on $\calK$, the canonical homomorphism $\calK\to T^{-1}\calK$ of localization with respect to $T$ is injective.
By the composition of this injection and the given isomorphism $T^{-1}\calK\cong A[t,t^{-1}]\otimes_A W$, we get the monomorphism $$\varphi: \calK\hookrightarrow A[t,t^{-1}]\otimes_AW$$
of graded $\R'(\F)$-modules.
Put $$\w_i=\{x\in W\mid t^i\otimes_A x\in \varphi (\calK)\}$$
for each integer $i$. It is routine to check that each $\w_i$ is an $A$-submodule of $W$ for each $i\in\Z$, and if $N$ is an $A$-submodule of $W$ such that $At^{i}\otimes_A N\subseteq \varphi (\calK)$, then $N\subseteq \w_{i}$ by the definition of $w_i$.

Let us check that 
$\w=\{\w_i\}_{i\in\Z}$ is an $\F$-filtration of $\K_M$ such that $\calK\cong\R'(\w)$ as a graded $\R'(\F)$-module.
Let $i$ and $j$ be integers.
From $At^{i+1}\otimes_A\w_{i+1}\subseteq \varphi (\calK)$, it follows that $t^{-1}\cdot (At^{i+1}\otimes_A\w_{i+1})\subseteq \varphi (\calK)$, whence
$
At^{i}\otimes_A\w_{i+1}\subseteq \varphi (\calK)
,$ and thus
$\w_{i+1}\subseteq\w_i$.
Since $At^{i+j}\otimes_AF_iw_j=F_it^i\cdot (At^j\otimes_A w_j)\subseteq \varphi (\calK),$
we get $F_iw_j\subseteq \w_{i+j}$. We obtain from the equality $\varphi (\calK)=\R'(\w)$ that $\calK\cong \R'(\w)$ as a graded $\R(\F)$-module, as required.

Finally, we will show that $\w_i=W$ for all $i\ll 0$. Take any element $x\in W$. Then $1\otimes x\in A[t,t^{-1}]\otimes_A W$. 
We can find a nonnegative integer $n$ such that $(t^{-1})^n(1\otimes x)=\varphi (\xi)$ for some $\xi\in \calK$. 
Hence $t^{-n}\otimes x\in \varphi (\calK)$, so that $x\in\w_{-n}$. Since $W$ is a finitely generated $A$-module, we see $\w_k=W$ for some integer $k$. This completes the proof.
\end{proof}

Let $\mathcal{M}=\{M_i\}_{i\in\Z}$ be an $\F$-filtration of $M$ and let
$\calN=\{N_i\}_{i\in\Z}$ be an $\F$-filtration  of an $A$-module $N$. For each homomorphism $\varphi :\R'(\calM)\to\R'(\calN)$ of graded $\R'(\F)$-modules and each integer $i$, we denote by $$\varphi_i: A_it^i\otimes M_i\to A_it^i\otimes N_i$$ the $i^{\th}$ homogeneous component of $\varphi$. 
With particular attention to detail, we define $$\varphi |_i : M_i\to N_i$$ as the homomorphism  of $A$-modules given by $\varphi_i(t^i\otimes x)=t^i\otimes \varphi |_i(x)$ for all $x\in M_i$, namely, $\varphi |_i$ is the $A$-homomorphism such that the diagram
\[
\xymatrix
{
A_it^i\otimes M_i \ar[r]^{\varphi_i}& A_it^i\otimes N_i \\
M_i \ar[r]^{\varphi |_i}  \ar[u]_{\wr} & N_i  \ar[u]_{\wr} 
}
\] 
of $A$-modules is commutative, where
the vertical isomorphisms are given by the correspondence $x\mapsto t^i\otimes x$.
Identifying $\R'(\M)$ with $\bigoplus_{i\in\Z}M_i$ and $\R'(\calN)$ with $\bigoplus_{i\in\Z}N_i$, we can view $\varphi |_i=\varphi_i$.

If $f: M\to N$ is an homomorphism of $A$-modules satisfying $f(M_i)\subseteq N_i$ for each $i\in\Z$,
\if0 namely the diagram
\[
\xymatrix
{
M \ar[r]^{f} \ar@{}[d]|{\bigcup}^|& N \ar@{}[d]|{\bigcup}^|\\
M_k \ar[r]^{\varphi |_k}   & N_k
}
\] 
is commutative, \fi 
then we have the homomorphism $\varphi: \R'(\calM)\to\R'(\calN)$ of graded $\R'(\F)$-modules defined by $$\varphi\bigg(\sum_{i\in\Z}t^i\otimes x_i\bigg)= \sum_{i\in\Z}t^i\otimes f(x_i),$$
which is referred to as {\it the graded $\R'(\F)$-homomorphism from
extended from $f$} in this paper.
Let us note that the converse of this fact holds true as follows.

\begin{lem}\label{restriction1}
Let $\varphi :\R'(\calM)\to\R'(\calN)$ be a homomorphism of graded $\R'(\F)$-modules. Then, taking an integer $n$ such that $M_n=M$ and $N_n=N$,
one has the diagram
\[
\xymatrix
{
M \ar[r]^{\varphi |_n} \ar@{}[d]|{\bigcup}^|& N \ar@{}[d]|{\bigcup}^|\\
M_k \ar[r]^{\varphi |_k}   & N_k
}
\] 
is commutative for every integer $k$, which illustrates that the map $\varphi$ is the graded $\R'(\F)$-homomorphism extended from $\varphi |_n$.
\end{lem}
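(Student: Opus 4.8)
The plan is to reduce everything to the graded structure. A homomorphism $\varphi\colon\R'(\calM)\to\R'(\calN)$ of graded $\R'(\F)$-modules is completely determined, degree by degree, by its homogeneous components $\varphi_i\colon A_it^i\otimes M_i\to A_it^i\otimes N_i$, or equivalently (under the identifications $\R'(\calM)=\bigoplus_i M_i$, $\R'(\calN)=\bigoplus_i N_i$ described just before the lemma) by the $A$-linear maps $\varphi|_i\colon M_i\to N_i$. So the content to verify is exactly: for the chosen $n$ with $M_n=M$ and $N_n=N$, the restriction of $\varphi|_n$ to $M_k\subseteq M=M_n$ lands in $N_k\subseteq N=N_n$ and equals $\varphi|_k$. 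Once this is established, defining $f=\varphi|_n\colon M\to N$ gives an $A$-homomorphism with $f(M_k)\subseteq N_k$ for all $k$, and the graded $\R'(\F)$-homomorphism extended from $f$ agrees with $\varphi$ in every degree, hence equals $\varphi$.

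The key computation uses $\R'(\F)$-linearity with respect to multiplication by $t^{-1}\in\R'(\F)_{-1}$. First I would observe that for $k\le n$ one has $M_k=M=M_n$ and $N_k=N=N_n$, since the $\F$-filtrations stabilize: indeed $M_n\subseteq M_k\subseteq M$ for $k\le n$ forces equality (and similarly for $\calN$), and for $k<n$ the identity $(t^{-1})^{n-k}\cdot(t^k\otimes x)=t^n\otimes x$ inside $\R'(\calM)$ together with $\R'(\F)$-linearity gives $\varphi_n(t^n\otimes x)=(t^{-1})^{n-k}\varphi_k(t^k\otimes x)$, i.e. $t^n\otimes\varphi|_n(x)=t^n\otimes\varphi|_k(x)$, so $\varphi|_n(x)=\varphi|_k(x)$ for all $x\in M=M_k$. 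Hence the square commutes trivially for $k\le n$ because all four modules are the full modules and the horizontal maps coincide. For $k>n$, take $x\in M_k\subseteq M=M_n$. Then $t^k\otimes x\in\R'(\calM)$ is $(t^{-1})^{k-n}\cdot(t^n\otimes x)$, and $\R'(\F)$-linearity of $\varphi$ yields
\[
t^k\otimes\varphi|_k(x)=\varphi_k(t^k\otimes x)=(t^{-1})^{k-n}\varphi_n(t^n\otimes x)=(t^{-1})^{k-n}(t^n\otimes\varphi|_n(x))=t^k\otimes\varphi|_n(x).
\]
In particular $\varphi|_n(x)=\varphi|_k(x)$, and since the left-hand side already lies in $N_k$ we conclude $\varphi|_n(x)\in N_k$, i.e. $\varphi|_n(M_k)\subseteq N_k$ and the displayed square commutes.

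I do not anticipate a genuine obstacle here; the only point requiring a little care is the bookkeeping between the two descriptions of homogeneous components ($\varphi_i$ on $A_it^i\otimes M_i$ versus $\varphi|_i$ on $M_i$) and making sure the element $t^k\otimes x$ really is an element of $\R'(\calM)$ when $x\in M_k$ — which is precisely the definition of $\R'(\calM)=\sum_i At^i\otimes_A M_i$. After that, the equality $\varphi=$ (the $\R'(\F)$-homomorphism extended from $\varphi|_n$) is just the statement that two graded homomorphisms agreeing in each degree are equal, which is immediate. So the write-up is essentially the two-line $t^{-1}$-multiplication argument above, preceded by the remark that the filtrations are constant for indices $\le n$.
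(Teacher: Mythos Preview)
Your approach is the same as the paper's: exploit $\R'(\F)$-linearity of $\varphi$ with respect to powers of $t^{-1}$ to show the components $\varphi|_i$ are compatible. However, your key computation has the direction of the $t^{-1}$-action reversed. Multiplication by $t^{-1}$ \emph{lowers} degree by $1$, so $(t^{-1})^m\cdot(t^j\otimes x)=t^{\,j-m}\otimes x$, not $t^{\,j+m}\otimes x$. Thus for $k>n$ the correct relation is
\[
t^n\otimes x=(t^{-1})^{k-n}\cdot(t^k\otimes x),
\]
not the one you wrote; as it stands, your displayed chain equates the degree-$k$ element $t^k\otimes\varphi|_k(x)$ with the degree-$(2n-k)$ element $(t^{-1})^{k-n}(t^n\otimes\varphi|_n(x))$, which is impossible for $k\neq n$. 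The same reversal occurs in your $k<n$ case.

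Once you swap the roles, the argument goes through exactly as in the paper (which phrases it uniformly for any pair $i\le j$ rather than splitting on $k\lessgtr n$): for $k\ge n$ and $x\in M_k$,
\[
t^n\otimes\varphi|_n(x)=\varphi(t^n\otimes x)=\varphi\big(t^{\,n-k}\cdot(t^k\otimes x)\big)=t^{\,n-k}\cdot\varphi(t^k\otimes x)=t^n\otimes\varphi|_k(x),
\]
whence $\varphi|_n(x)=\varphi|_k(x)\in N_k$.
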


\begin{proof}
Let $i$ and $j$ be integers such that $i\le j$.
It is enough to show that  
the diagram
\[
\xymatrix
{
M_i \ar[r]^{\varphi |_i} \ar@{}[d]|{\bigcup}^|& N_i \ar@{}[d]|{\bigcup}^|\\
M_j \ar[r]^{\varphi |_j}   & N_j
}
\] 
is commutative. 
Take any element $x\in M_j$, and then $x\in M_i$, as $M_j\subseteq M_i$. 
We note that $t^{i-j}\in \R'(\F)$. Hence, 
\begin{align*}
t^i\otimes \varphi |_j(x)
=t^{i-j}\cdot\big(t^j\otimes \varphi|_j(x)\big)
=t^{i-j}\cdot \varphi (t^j\otimes x)
&=\varphi\big(t^{i-j}\cdot(t^j\otimes x)\big)\\
&=\varphi\big(t^i\otimes x\big)\\
&=t^i\otimes \varphi|_i(x),
\end{align*}
whence $t^i\otimes \varphi |_j(x)=t^i\otimes \varphi|_i(x)$. Thus, $\varphi |_j(x)=\varphi|_i(x)$, as desired. 
For the last assertion, we have $\varphi |_n(M_k)=\varphi |_k(M_k)\subseteq N_k$ for every integer $k$, and therefore $\varphi$ is the graded $\R'(\F)$-homomorphism extended from $\varphi |_n$.
\end{proof}

For each homomorphism $\varphi :\R'(\calM)\to\R'(\calN)$ of graded $\R'(\F)$-modules, we put $\overline{\varphi}=\varphi\otimes_{\R'(\F)}\G(\F)$,
namely $\overline{\varphi}: \G(\calM)\to\G(\calN)$, 
which is a homomorphism of graded $\G(\F)$-modules.
Furthermore, for each $i\in\Z$, we denote by $$\overline{\varphi}|_i: M_i/M_{i+1}\to N_i/N_{i+1}$$ the $A$-homomorphism given by $\overline{\varphi}|_i(\overline{x})=\overline{\varphi|_i(x)}$ for each $x\in M_i$, where 
$\overline{x}$ denotes the image of $x$ in $M_i/M_{i+1}$ and $\overline{\varphi|_i(x)}$ is the image of $\varphi|_i(x)$ in $N_i/N_{i+1}$.
That is to say, identifying $\G(\M)=\bigoplus_{i\in\Z} M_i/M_{i+1}$ and $\G(\calN)=\bigoplus_{i\in\Z} N_i/N_{i+1}$, we can view $\overline{\varphi}|_i$ as the $i^{\th}$ homogeneous component of $\overline{\varphi}$. 

For each Noetherian $\Z$-graded ring $S$, each graded $S$-homomorphism $\phi: E\to E'$, and each integer $n$, we denote by $$\phi_{\ge n}: E_{\ge n}\to E'_{\ge n}$$ the graded $S_{\ge 0}$-homomorphism given by $\phi_{\ge n}(x)=\phi (x)$ for every $x\in E_{\ge n}$.

We say that an $\F$-filtration $\calL=\{L_i\}_{i\in\Z}$ of an $A$-submodule $L$ of $M$ is {\it an $\F$-subfiltration of $\M$} if $L_i\subseteq M_i$ for each $i\in\Z$, and then we represent it as $\calL\subseteq \M$. Let us consider a question of when the equality $\calL= \M$ holds true. Our answer is the following, which is an essentially given by \cite[Theorem 3.2]{GI}.

\begin{lem}\label{embed}
Let $\calL=\{L_i\}_{i\in\Z}$ be an $\F$-subfiltration of $\M$ and let $\varphi: \R'(\calL)\hookrightarrow \R'(\M)$ denote the inclusion map. 
Let $n$ be an integer.
Assume that the homomorphism $$\overline{\varphi}_{\ge n}: \G(\calL)_{\ge n}\to\G(\M)_{\ge n}$$ of graded $\G(\F)$-modules is injective and $L_n\supseteq M_{n+1}$. Then the equality $L_i=M_i$ holds for each integer $i\ge n+1$.
\end{lem}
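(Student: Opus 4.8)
The plan is to prove by induction on $i \ge n+1$ that $L_i = M_i$. The base case $i = n+1$ and the inductive step can in fact be handled uniformly: I would show that for every $i \ge n+1$, if $L_{i-1} \supseteq M_i$ then $L_i = M_i$, starting from the hypothesis $L_n \supseteq M_{n+1}$ and noting that once $L_{i-1} = M_{i-1}$ is known we have $L_{i-1} = M_{i-1} \supseteq M_i$, so the hypothesis of the step is maintained. Fix $i \ge n+1$ and assume $L_{i-1} \supseteq M_i$. Since $\calL \subseteq \M$ we already have $L_i \subseteq M_i$, so it suffices to prove the reverse inclusion $M_i \subseteq L_i$.

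To do this I would take an arbitrary $x \in M_i$ and chase it through the injectivity of $\overline{\varphi}_{\ge n}$. Consider the element $t^{i}\otimes x \in \R'(\M)$; its image $\overline{t^i \otimes x}$ in $\G(\M)_i = M_i/M_{i+1}$ is a homogeneous element of degree $i \ge n$, so it lies in $\G(\M)_{\ge n}$. Because $L_{i-1} \supseteq M_i$, the element $x$ lies in $L_{i-1}$, so $t^{i-1}\otimes x \in \R'(\calL)$ and hence $t^{-1}\cdot(t^{i-1}\otimes x)$ makes sense in $\G(\M)$; more to the point, I want to produce a preimage of $\overline{t^i\otimes x}$ under $\overline{\varphi}|_i : L_i/L_{i+1} \to M_i/M_{i+1}$. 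Here is the key observation: in $\G(\M)$ we have $t^{-1}\cdot \overline{t^{i-1}\otimes x}^{\,(i-1)} = 0$ by definition of $\G(\M) = \R'(\M)/t^{-1}\R'(\M)$, but working instead with the surjection, the class of $x$ in $M_{i-1}/M_i$ coming from $x \in L_{i-1}$ is the image of the class of $x$ in $L_{i-1}/L_i$ under $\overline{\varphi}|_{i-1}$; the point is to transfer this one degree up. Concretely, because $L_{i-1}\supseteq M_i \supseteq M_{i+1}$, the class of $x$ in $M_i/M_{i+1}$ is annihilated in a suitable sense, and I can compare $\overline{\varphi}|_{i-1}$ and $\overline{\varphi}|_i$. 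Let me phrase it cleanly: since $x \in M_i$, its residue $\bar x \in \G(\M)_i$ is in the image of $t^{-1}\colon \G(\M)_{i-1}\not\to$ — rather, I should use that $\overline{\varphi}_{\ge n}$ being injective and graded forces $\coker(\varphi)_{\ge n}$ to have $t^{-1}$ acting injectively in the relevant range, allowing the downward-known equality $L_{i-1}\supseteq M_i$ to propagate upward. The cleanest route: apply Lemma \ref{restriction1} to identify all the $\varphi|_k$ with the single inclusion $L \hookrightarrow M$ (valid after choosing $n'\ll 0$ with $L_{n'}=L$, $M_{n'}=M$), reducing the statement to a statement about the filtrations themselves, and then the injectivity of $\overline{\varphi}_{\ge n}$ says exactly that $L_k + M_{k+1} = L_{k+1} + \text{(something)}$ — precisely, $\overline{\varphi}|_k$ injective means $L_k \cap M_{k+1} = L_{k+1}$ for $k \ge n$. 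Granting $L_k \cap M_{k+1} = L_{k+1}$ for all $k \ge n$ together with $L_n \supseteq M_{n+1}$, an easy induction gives $L_i = M_i$: indeed $M_{n+1} = M_{n+1}\cap M_{n+1} \subseteq L_n \cap M_{n+1} = L_{n+1} \subseteq M_{n+1}$, and inductively $M_{i} \subseteq M_{i-1} = L_{i-1}$ gives $M_i = L_{i-1}\cap M_i = L_i$.

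So the real content I would isolate first is the dictionary: $\overline{\varphi}|_k : L_k/L_{k+1} \to M_k/M_{k+1}$ is the natural map induced by inclusions, and it is injective if and only if $L_k \cap M_{k+1} = L_{k+1}$; hence $\overline{\varphi}_{\ge n}$ injective is equivalent to $L_k\cap M_{k+1} = L_{k+1}$ for all $k\ge n$. Once this translation is in place the rest is the short induction just sketched.

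I expect the main obstacle to be purely bookkeeping rather than mathematical depth: carefully justifying that injectivity of the graded map $\overline{\varphi}_{\ge n}$ degree-by-degree is the stated intersection condition (this uses that $\G(\M) = \bigoplus M_k/M_{k+1}$ as identified after Lemma \ref{restriction1}, and that $\overline\varphi$ is extended from an inclusion), and making sure the index $n$ is handled correctly at the boundary so that the hypothesis $L_n \supseteq M_{n+1}$ is used exactly where needed to start the induction. No deep input beyond Lemma \ref{restriction1} and the definitions should be required.
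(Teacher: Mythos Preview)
Your proposal is correct and, once you arrive at the key translation $\ker\overline{\varphi}|_k = (L_k\cap M_{k+1})/L_{k+1}$ followed by the short induction, it is essentially identical to the paper's proof. The exploratory element-chasing in the middle of your write-up can be safely discarded: the paper proceeds directly to $L_k\cap M_{k+1}=L_{k+1}$ for $k\ge n$ and then runs the same induction you describe at the end.
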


\begin{proof}
For every integer $i$, we have $\ker \overline{\varphi}|_i=L_i\cap M_{i+1}/L_{i+1}$. Hence, $$L_i\cap M_{i+1}=L_{i+1}$$ for each integer $i\ge n$ because  
$\overline{\varphi}_{\ge n}$ is injective.
Since $L_n\supseteq M_{n+1}$, it follows from the equality above that $L_{n+1}=M_{n+1}$, and then we have $L_{n+1}\supseteq M_{n+2}$. Thus, we see the required equality $M_{i}=L_{i}$ holds true by using induction on $i\ (\ge n+1)$.
\end{proof}

For each $m\in\Z$, we define $\M(m)=\{M_{i+m}\}_{i\in\Z}$, which forms an $\F$-filtration of $M$, and then $\R'(\M(m))=\sum_{i\in\Z} At^{i}\otimes_A M_{i+m}$. Hence, 
\begin{center}
$\R'(\M(m))\cong\R'(\M)(m)$\quad and\quad $\G(\M(m))\cong\G(\M)(m)$
\end{center}
as a graded $\R'(\F)$-module. Notice that 
$$\R'(\M(m))_{\ge -m}\cong \R(\M)(m)$$ 
as a graded $\R(\F)$-module. Let us now state the following.

\begin{lem}\label{y} 
Let $n$ be an integer.
Then the following two conditions are equivalent.
\begin{enumerate}
\item $\R' (\M)_{\ge n}\cong \R(\F)(-n)$ as a graded $\R(\F)$-module. \vspace{1mm}
\item There is $y\in M$ such that $0:_Ay=0$ and $M_{i}=F_{i-n} y$ for every integer $i\ge n$.
\end{enumerate}
\end{lem}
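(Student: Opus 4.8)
The plan is to prove the two implications separately, with the heart of the matter being the construction of the element $y$ in $(1)\Rightarrow(2)$.

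For $(2)\Rightarrow(1)$, suppose such a $y\in M$ exists. The assumption $M_i = F_{i-n}y$ for all $i\ge n$ means that the graded $\R(\F)$-module $\R'(\M)_{\ge n} = \bigoplus_{i\ge n} At^i\otimes_A M_i$ is generated in degree $n$ by the single element $t^n\otimes y$. First I would define the graded $\R(\F)$-homomorphism $\R(\F)(-n)\to \R'(\M)_{\ge n}$ sending the degree-$n$ generator $1$ to $t^n\otimes y$; concretely it sends $\xi\in F_{i-n}$ (a degree-$i$ element of $\R(\F)(-n)$) to $t^i\otimes \xi y$. This is surjective by the generation statement. For injectivity, if $\xi\in F_{i-n}$ maps to $0$, then $\xi y = 0$ in $M$, so $\xi\in 0:_A y = 0$; hence the map is bijective. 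This gives $(1)$.

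For $(1)\Rightarrow(2)$, fix an isomorphism $\psi:\R(\F)(-n)\xrightarrow{\sim}\R'(\M)_{\ge n}$ of graded $\R(\F)$-modules. Set $y$ to be the element of $M$ determined by $\psi$ in degree $n$: that is, $\psi$ in degree $n$ is an $A$-isomorphism $A_n = A \xrightarrow{\sim} [\R'(\M)_{\ge n}]_n = A_nt^n\otimes_A M_n$, and since $M_n = M$ (as $n$ is allowed to be any integer — wait, here I must be careful: $M_n$ need not equal $M$; rather $[\R'(\M)]_n = At^n\otimes_A M_n$, so the image of $1$ under $\psi$ in degree $n$ has the form $t^n\otimes y$ for a unique $y\in M_n\subseteq M$). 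Now the key computations: since $\psi$ is an $\R(\F)$-homomorphism and the degree-$i$ component of $\R(\F)(-n)$ (for $i\ge n$) is $F_{i-n}t^i$, for any $a\in F_{i-n}$ we have $\psi(at^i) = at^i\cdot\psi(t^n\cdot 1 \text{-shifted})$; chasing through, $\psi$ sends the degree-$i$ piece $F_{i-n}$ onto $[\R'(\M)_{\ge n}]_i = At^i\otimes_A M_i$ by $a\mapsto t^i\otimes ay$. Surjectivity of $\psi$ in each degree $i\ge n$ then gives $M_i = F_{i-n}y$; in particular $M_n = F_0 y = Ay$, so $y$ is a generator consistent with the earlier identification. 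Injectivity of $\psi$ in degree $n$ (where $F_0 = A$) gives: if $ay=0$ then $at^n\otimes y = 0$, so $a=0$ in $A_n t^n \otimes_A M_n$ forces $a = 0$; hence $0:_A y = 0$. This yields $(2)$.

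The main obstacle is purely bookkeeping: keeping straight the identification of $[\R'(\M)]_i = At^i\otimes_A M_i$ with $M_i$ via $x\mapsto t^i\otimes x$ (as set up in the paragraph preceding Lemma \ref{restriction1}), and verifying that under an abstract graded $\R(\F)$-isomorphism $\psi$, the image of the degree-$n$ generator genuinely controls all higher degrees through multiplication by elements of $F_{i-n}$ — this is exactly where the module structure of $\R(\F)(-n)$, namely $[\R(\F)(-n)]_i = F_{i-n}$ for $i \ge n$, is used. No deep input is needed beyond Lemma \ref{restriction1} and the definitions; the freeness of $0:_A y$ is immediate from injectivity in the single degree $n$ where the coefficient ideal $F_0$ is all of $A$.
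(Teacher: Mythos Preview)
Your proposal is correct and follows essentially the same approach as the paper's proof: in both directions one tracks the degree-$n$ generator $t^n\otimes y$ and reads off $M_i=F_{i-n}y$ from surjectivity and $0:_A y=0$ from injectivity in degree $n$. The only cosmetic difference is that for $(2)\Rightarrow(1)$ the paper packages the map via the ``extended homomorphism'' formalism and invokes Lemma~\ref{restriction1}, whereas you define the map directly and check bijectivity degreewise; the content is the same.
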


\begin{proof}
$(1)\Rightarrow (2)$ 
Let $\phi: \R(\F)(-n)
\xrightarrow{\sim}\R' (\M)_{\ge n}$ stand for the given isomorphism of graded $\R(\F)$-modules. We write $\phi (1)=t^n\otimes y$ for some $y\in M_n$. Since $\phi$ is surjective, $$\R' (\M)_{\ge n}=\R(\F)\cdot (t^n\otimes y)=\sum_{i\ge 0}F_it^{i}\cdot (t^n\otimes y)=\sum_{i\ge n}At^{i}\otimes F_{i-n}y.$$ Since $\R' (\M)_{\ge n}=\sum_{i\ge n}At^{i}\otimes_A M_{i}$, we get $M_i=F_{i-n}y$ for all integers $i\ge n$, as desired. 
Furthermore, we have $0:_Ay=0$ because the $n^{\th}$ homogeneous component of $\phi$ induces that $A\cong M_n=Ay$.

$(2)\Rightarrow (1)$ 
We set $f: A\to M$ to be the $A$-homomorphism given by $f(1)=y$ and $\varphi : \R'(\F(-n))\to\R'(\calM)$ to be the graded $\R'(\F)$-homomorphism extended from $f$. Let $i$ be an integer satisfying $i\ge n$. Then the homomorphism $\varphi |_{i}: F_{i-n}\to M_i$ is a restriction of $f$ by Lemma \ref{restriction1}.  It is routine to check that $\varphi |_{i}$ is bijective because
 $M_{i}=F_{i-n} y$ and $0:_Ay=0$, 
whence so is the homomorphism $\varphi_{\ge n}$. Thus, $\R(\F)(-n)\cong\R' (\M)_{\ge n}$ as a graded $\R(\F)$-module.
\end{proof}

An $\F$-filtration $\L=\{L_i\}_{i\in\Z}$ of an $A$-module $L$ is said to be {\it separated} (resp. {\it strongly separated}), if $\bigcap_{i\in \Z}L_i=0$ (resp. $\bigcap_{i\in\Z}(N+L_i)=N$ for each $A$-submodule $N$ of $L$).
The following proposition, which is a generalization of results \cite[Lemma 4.4]{GN} and \cite[Lemma 3.2]{TVZ},
holds a pivotal role in this paper. 

\begin{prop}\label{embed2}
Assume that $\F$ is  separated and $\M$ is strongly separated.
Let $n$ be an integer. 
Then the following three conditions are equivalent.\vspace{1mm}
\begin{enumerate}
\item $\R' (\M)_{\ge n}\cong \R(\F)(-n)$ as a graded $\R(\F)$-module. \vspace{2mm}
\item $\G (\M)_{\ge n}\cong \G(\F)(-n)$ as a graded $\G(\F)$-module. \vspace{2mm}
\item
\begin{enumerate}
\item[{\rm (i)}] $M_{n}\cong A$ as an $A$-module and 
\item[{\rm (ii)}]  there exists a monomorphism $\G(\F)(-n)\hookrightarrow \G (\M)$ of graded $\G(\F)$-modules. 
\end{enumerate}
\end{enumerate}
\end{prop}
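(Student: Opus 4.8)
The plan is to establish the cycle $(1)\Rightarrow(2)\Rightarrow(3)\Rightarrow(1)$; the common engine is Lemma~\ref{y}, which converts $(1)$ into the explicit description $M_i=F_{i-n}y$ ($i\ge n$) of the filtration for a suitable $y\in M$ with $0:_Ay=0$, and conversely. For $(1)\Rightarrow(2)$ I would take such a $y$, form the graded $\R'(\F)$-homomorphism $\varphi\colon\R'(\F)(-n)\to\R'(\M)$ extended from $A\to M$, $1\mapsto y$, and note that $\varphi|_i\colon F_{i-n}\to M_i$ is bijective for every $i\ge n$. By Lemma~\ref{restriction1} the map $\varphi|_{i+1}$ is the restriction of $\varphi|_i$, so $\varphi|_i$ carries $F_{i-n+1}$ onto $M_{i+1}$, whence $\overline{\varphi}|_i\colon F_{i-n}/F_{i-n+1}\to M_i/M_{i+1}$ is bijective for all $i\ge n$; since $\G(\F)(-n)$ is concentrated in degrees $\ge n$, this yields the isomorphism $\G(\F)(-n)\cong\G(\M)_{\ge n}$ of $(2)$.

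For $(2)\Rightarrow(3)$, condition $(3)(\mathrm{ii})$ is immediate by composing the isomorphism $\G(\F)(-n)\xrightarrow{\sim}\G(\M)_{\ge n}$ with the inclusion of graded $\G(\F)$-modules $\G(\M)_{\ge n}\hookrightarrow\G(\M)$. For $(3)(\mathrm{i})$, observe that $\G(\F)(-n)$ is cyclic, generated in degree $n$ by $1+F_1$, whose $\G(\F)$-annihilator is zero; hence $\G(\M)_{\ge n}$ is generated by $\overline z$ for some $z\in M_n$ with $\Ann_{\G(\F)}\overline z=0$. The vanishing of this annihilator unwinds to $(M_{i+1}:_Az)\cap F_{i-n}=F_{i-n+1}$ for all $i\ge n$, and cyclicity to $M_i=F_{i-n}z+M_{i+1}$ for all $i\ge n$. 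An induction on $j$ using the former yields $0:_Az\subseteq\bigcap_{j\ge 1}F_j=0$, where separatedness of $\F$ enters; iterating the latter yields $M_n=Az+M_i$ for all $i\ge n$, and since also $M_i\supseteq M_n$ for $i\le n$ we obtain $M_n\subseteq\bigcap_{i\in\Z}(Az+M_i)=Az$ because $\M$ is strongly separated. Therefore $M_n=Az\cong A$.

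For $(3)\Rightarrow(1)$, write $M_n=Ay$ with $0:_Ay=0$ using $(3)(\mathrm{i})$, let $\psi\colon\G(\F)(-n)\hookrightarrow\G(\M)$ be the monomorphism of $(3)(\mathrm{ii})$, and write $\psi(1+F_1)=\overline w$ with $w=cy\in M_n$ for some $c\in A$. I would prove $M_{n+j}=F_jy$ for all $j\ge 0$ by induction on $j$, the case $j=0$ holding by the choice of $y$. Assuming $M_{n+j}=F_jy$, set $J=M_{n+j+1}:_Ay$; then $M_{n+j+1}=Jy\subseteq F_jy$ forces $J\subseteq F_j$ (using $0:_Ay=0$), while $F_{j+1}M_n\subseteq M_{n+j+1}$ forces $F_{j+1}\subseteq J$. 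Now $\psi|_{n+j}$ sends $a+F_{j+1}\mapsto acy+M_{n+j+1}$, and $acy\in M_{n+j+1}=Jy$ iff $ac\in J$, so injectivity of $\psi|_{n+j}$ becomes $(J:_Ac)\cap F_j=F_{j+1}$; since trivially $J\subseteq(J:_Ac)$ and already $J\subseteq F_j$, this gives $J=J\cap F_j\subseteq(J:_Ac)\cap F_j=F_{j+1}$, so $J=F_{j+1}$ and $M_{n+j+1}=F_{j+1}y$. Hence $M_i=F_{i-n}y$ for all $i\ge n$, and Lemma~\ref{y}, implication $(2)\Rightarrow(1)$, gives $(1)$.

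The main obstacles are the two \emph{rigidity} passages. In $(2)\Rightarrow(3)(\mathrm{i})$ one must upgrade freeness information about the layers $M_i/M_{i+1}$ to a statement about $M_n$ itself, and this is exactly where strong separatedness of $\M$ (together with separatedness of $\F$ for the torsion-free part) is genuinely needed. In $(3)\Rightarrow(1)$ the delicacy is that $(3)(\mathrm{ii})$ only provides \emph{some} monomorphism $\psi$, arising from an element $w=cy$ that need not generate $M_n$, so one cannot read off $M_i=F_{i-n}y$ at once; the elementary inclusion $J\subseteq(J:_Ac)$ is precisely what lets the injectivity of $\psi$ in degree $n+j$ normalize the $(n+j+1)$-st layer and make the induction close.
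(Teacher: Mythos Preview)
Your proof is correct and follows essentially the same strategy as the paper's: the implications $(1)\Rightarrow(2)$ and $(2)\Rightarrow(3)$ are argued in the same way (via Lemma~\ref{y} and the separatedness hypotheses), while in $(3)\Rightarrow(1)$ you carry out by hand the induction that the paper packages as Lemma~\ref{embed}, replacing its factorization $\psi_2=\hat a\circ\overline{\varphi_1}$ by the equivalent elementary observation $J\subseteq J:_Ac$. The two arguments are the same at the level of ideas.
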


\begin{proof}
$(1)\Rightarrow (2)$ By Lemma \ref{y}, we can choose $y\in M$ such that $M_{i}=F_{i-n} y$ for all integer $i\ge n$ and $0:_Ay=0$. Put $L_i=F_{i-n}y$ for every $i\in\Z$ and $\calL=\{L_i\}_{i\in\Z}$, which is an $\F$-filtration of $Ay$, and then $L_i=M_i$ for every integer $i\ge n$. 
Let $f: A\to M_n=Ay$ stand for the $A$-isomorphism given by $f(1)=y$.
Set $$\varphi : \R'(\F(-n))\xrightarrow{}\R'(\calL)$$ to be the graded $\R'(\F)$-homomorphism extended from $f$. Then we observe that $\varphi$ is bijective.
Consider $\varphi\otimes_{\R'(\F)}\G(\F)$, and we have
the isomorphism $$\overline{\varphi}: \G(\F(-n))\xrightarrow{\sim} \G (\calL)$$
of $G(\F)$-modules. Since $\overline{\varphi}_{\ge n}$ is also bijective, we obtain from $\G (\calL)_{\ge n}\cong\G (\M)_{\ge n}$ as a graded $\G(\F)$-module that $\G(\F)(-n)\cong \G (\M)_{\ge n}$ as a graded $\G(\F)$-module, as required.

$(2)\Rightarrow (3)$ Let $\psi: \G(\F)(-n) \xrightarrow{\sim}\G (\M)_{\ge n}$ stand for the given isomorphism of graded $\G(\F)$-modules. It is enough to show that the condition (i) holds true. We write $\psi (1)=\overline{t^n\otimes y}$ for some $y\in M_n$, where $\overline{t^n\otimes y}$ is the image of $t^n\otimes y$ in $\R' (\M)/t^{-1}\R' (\M)$. 
Set $$\varphi : \R'(\F(-n))\to\R'(\calM)$$ to be the graded $\R'(\F)$-homomorphism extended  from $f:A\to M$ given by $f(1)=y$.
Look at the homomorphism $$\overline{\varphi}: \G(\F(-n))\to \G (\M)$$ of $G(\F)$-modules. Then we can view $\overline{\varphi}_{\ge n}=\psi$, since 
$\G(\F(-n))_{\ge n}\cong \G(\F)(-n)$ as a graded $\G(\F)$-module.

For every integer $i\ge n$, we have $\overline{\varphi}|_i: F_{i-n}/F_{i-n+1}\to M_i/M_{i+1}$ is surjective because so is $\psi\, (=\overline{\varphi}_{\ge n})$, and hence the equality $$M_i=F_{i-n}y+M_{i+1}$$ holds true.
Let $k$ be an integer such that $k\ge n$. By using induction on $i\, (\ge k)$, we obtain that $M_{k}=F_{k-n}y+M_{i+1}$,
which yields the equality $$M_k=\bigcap_{i\ge k}(F_{k-n}y+M_{i+1}),$$ and thus $M_k=F_{k-n}y$ because $\M$ is strongly separated. In particular, $M_n=Ay$.

We must show $0:_Ay=0$.
For every integer $i\ge n$, we have $$\overline{\varphi}|_i: F_{i-n}/F_{i-n+1}\to M_i/M_{i+1}=F_{i-n}y/F_{i-n+1}y$$ is injective because so is $\psi$, and hence the equality
$$F_{i-n+1}=F_{i-n+1}y:_{F_{i-n}}y$$  holds true because $\ker \overline{\varphi}\big|_i=(F_{i-n+1}y:_{F_{i-n}}y)/F_{i-n+1}$. That is to say, $F_{i}=F_{i}y:_{F_{i-1}}y$ for all integers $i\ge 1$.
By using induction on $i\ge 1$, we obtain that $F_{i}=F_{i}y:_{A}y$.
Hence, 
\begin{center}
$\bigcap_{i\ge 1}F_{i}=\bigcap_{i\ge 1}\big(F_{i}y:_{A}y\big)=\big(\bigcap_{i\ge 1}F_{i}y\big):_{A}y.$
\end{center}
Since $\F$ and $\M$ are separated, we have $\bigcap_{i\ge 1}F_{i}=0$ and $\bigcap_{i\ge 1}F_{i}y\subseteq \bigcap_{i\ge 1}M_{i+n}=0$, so that $0:_Ay=0$. Thus, $M_{n}\cong A$ as an $A$-module.

$(3)\Rightarrow (1)$ Since $M_{n}\cong A$ as an $A$-module, we can find an element $y\in M_n$ such that $M_{n}=Ay$ and $0:_Ay=0$.
Put $L_i=F_{i-n}y$ for every $i\in\Z$ and set $\calL=\{L_i\}_{i\in\Z}$, which is an $\F$-filtration of $Ay$, and then $\calL\subseteq \M$ and $L_n=M_n=Ay$. 

Let $\iota: Ay\hookrightarrow M$ be the inclusion map and let $f:A\xrightarrow{\sim} Ay$ be the $A$-isomorphism such that $f(1)=y$. Set $\varphi_1: \R' (\L)\hookrightarrow\R' (\M)$ and $\varphi_2: \R' (\F(-n))\xrightarrow{\sim}\R' (\L)$ to be the graded $\R' (\F)$-homomorphisms extended from $\iota$ and $f$, respectively. 
Look at the induced
homomorphisms
\begin{center}
$\overline{\varphi_1 }: \G (\L)\to\G (\M)$\quad and \quad $\overline{\varphi_2 }: \G(\F(-n))\to \G (\L)$
\end{center}
of graded $\G(\F)$-modules.
We notice that $\overline{\varphi_2}$ is bijective because so is $\varphi_2$. Hence, $\G (\L)$ is a free graded $\G (\F)$-module of rank one, and hence we represent 
$$\G (\L)=\G (\F)\cdot \overline{t^n\otimes y},$$
because $\varphi_2(t^n\otimes 1)=t^n\otimes y$, where $\overline{t^n\otimes y}$ denotes the image of $t^n\otimes y$ in $\R'(\L)/t^{-1}\R'(\L)$.

By the composition of  the natural isomorphism $\G(\F(-n))\cong\G(\F)(-n)$ and the given monomorphism $\G(\F)(-n)\hookrightarrow \G (\M)$, we get the monomorphism
$\psi_1: \G(\F(-n))\hookrightarrow \G (\M)$ of graded $\G(\F)$-modules. 
Moreover, we put $\psi_2=\psi_1\circ\overline{\varphi_2}^{-1}$, namely $$\psi_2: \G(\L)\hookrightarrow \G (\M),$$which is a monomorphism of graded $\G(\F)$-modules. Since $L_n=M_n=Ay$, there exists $a\in A$ such that $\psi_2(\overline{t^n\otimes y})=\overline{t^n\otimes ay}$. 
We denote by $$\hat {a}: \G (\M)\to \G (\M)$$ the multiplication map by $a$. Since $\G(\L)$ is a free $\G(\F)$-module of rank one generated by the element $\overline{t^n\otimes y}$ as mentioned above, the homomorphism $\psi_2$ is determined by the correspondence $\overline{t^n\otimes y}\mapsto\overline{t^n\otimes ay}$, so that it is routine to check that the equality $$\psi_2=\hat {a}\circ\overline{\varphi_1},$$ holds true. Thus, we obtain that $\overline{\varphi_1}$ is injective, since so is $\psi_2$. 

The equality $L_n=M_n$ implies that $L_n\supseteq M_{n+1}$, and therefore, we get $L_i=M_i$ for all integers $i\ge n+1$ by Lemma \ref{embed}. This means that $M_{i}=F_{i-n} y$ for every integer $i\ge n$, and thus $\R(\F)(-n)\cong\R(\M)_{\ge n}$ as a graded $\R(\F)$-module by Lemma \ref{y} (recall that $0:_Ay=0$). This completes the proof.  
\end{proof}

We notice that, when 
$\F'=\{F'_i\}_{i\in\Z}$ is a filtration of ideals in $A$ such that $\F\subseteq\F'$, the ring $\R'(\F)$ becomes a subring of $\R'(\F')$. We conclude this section with the following lemma that will be instrumental in Section 4. Let us prove it.

\begin{lem}\label{extendF}
Let $\F' = \{F'_i\}_{i\in\Z}$ be a filtration of ideals in $A$ satisfying $\F \subseteq \F'$. Suppose $\calK$ is a graded $\R'(\F')$-module with a graded $\R'(\F)$-isomorphism $\phi: \R'(\M) \xrightarrow{\sim} \calK$. Then $\M$ naturally extends to an $\F'$-filtration of $M$, and $f$ becomes a graded isomorphism of graded $\R'(\F')$-modules.
\end{lem}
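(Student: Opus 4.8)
The plan is to pull the $\R'(\F')$-module structure of $\calK$ back to $\R'(\M)$ along $\phi$ (the map written $f$ in the statement) and then to show that the resulting action of $\R'(\F')$ is simply the restriction of the natural action on $A[t,t^{-1}]\otimes_A M$. Once that is done, reading off the action of the elements $at^i$ with $a\in F'_i$ forces $F'_iM_j\subseteq M_{i+j}$ for all $i,j$, so that $\M$ becomes an $\F'$-filtration of $M$; and the pulled-back action then coincides by construction with the $\R'(\F')$-structure of the extended Rees module of that $\F'$-filtration, so $\phi$ is $\R'(\F')$-linear automatically.

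First I would record the localization facts. Put $T=\{(t^{-1})^n\mid n\ge 0\}$. Because $\M$ is an $\F$-filtration there is $i_0$ with $M_i=M$ for all $i\le i_0$, and a short check gives $T^{-1}\R'(\M)=A[t,t^{-1}]\otimes_A M$, with the localization map $\R'(\M)\to T^{-1}\R'(\M)$ being nothing but the defining inclusion $\R'(\M)\subseteq A[t,t^{-1}]\otimes_A M$ (in particular $t^{-1}$ is a nonzero divisor on $\R'(\M)$, hence on $\calK$). Similarly, since $F_i=A=F'_i$ for $i\le 0$, one has $T^{-1}\R'(\F)=A[t,t^{-1}]=T^{-1}\R'(\F')$, with the inclusion $\R'(\F)\hookrightarrow\R'(\F')$ localizing to the identity of $A[t,t^{-1}]$.

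Now define, for $r\in\R'(\F')$ and $z\in\R'(\M)$, the element $r\ast z:=\phi^{-1}\!\big(r\,\phi(z)\big)$; this is a graded $\R'(\F')$-module structure on $\R'(\M)$ making $\phi$ a graded $\R'(\F')$-isomorphism, and since $\phi$ is $\R'(\F)$-linear we get $r\ast z=rz$ for all $r\in\R'(\F)$, so $\ast$ extends the given $\R'(\F)$-action. Localizing at $T$, the action $\ast$ yields a graded $T^{-1}\R'(\F')$-module structure on $A[t,t^{-1}]\otimes_A M=T^{-1}\R'(\M)$ whose restriction along $T^{-1}\R'(\F)\to T^{-1}\R'(\F')$ is the natural $A[t,t^{-1}]$-action; but that transition map is the identity, so the localized $\ast$-action \emph{is} the natural action. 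Hence for $a\in F'_i$ and $x\in M_j$: the element $(at^i)\ast(t^j\otimes x)$ lies in the degree-$(i+j)$ piece $At^{i+j}\otimes_A M_{i+j}$ of $\R'(\M)$, so equals $t^{i+j}\otimes w$ with $w\in M_{i+j}$; comparing inside $A[t,t^{-1}]\otimes_A M$ with the natural action gives $t^{i+j}\otimes w=t^{i+j}\otimes ax$, and injectivity of $\R'(\M)\hookrightarrow A[t,t^{-1}]\otimes_A M$ forces $ax=w\in M_{i+j}$. Thus $F'_iM_j\subseteq M_{i+j}$; together with $M_i\supseteq M_{i+1}$ and $M_i=M$ for $i\ll 0$ (both inherited from $\M$ being an $\F$-filtration), this says $\M$ is an $\F'$-filtration of $M$. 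For this $\F'$-filtration, the $\R'(\F')$-module structure of $\R'(\M)$ is, by definition, the restriction of the natural action on $A[t,t^{-1}]\otimes_A M$ — i.e. exactly $\ast$ — and therefore $\phi:\R'(\M)\to\calK$ is an isomorphism of graded $\R'(\F')$-modules.

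The point that needs care is the identification of the localized pulled-back action with the natural $A[t,t^{-1}]$-action; this is where the equality $T^{-1}\R'(\F)=T^{-1}\R'(\F')$ carries the argument, since after inverting $t^{-1}$ the $\R'(\F)$-linearity of $\phi$ already pins the whole action down, and the rest is bookkeeping with the tautological embedding $\R'(\M)\hookrightarrow A[t,t^{-1}]\otimes_A M$. (Alternatively one could deduce the lemma from Lemma \ref{filtration}, applied with $\F'$ in place of $\F$, $W=M$, and the graded $T^{-1}\R'(\F')$-isomorphism $T^{-1}\calK\cong A[t,t^{-1}]\otimes_A M$ obtained by localizing $\phi$: inspecting that proof, the $\F'$-filtration it produces is $\{\,x\in M\mid t^i\otimes x\in\R'(\M)\,\}_{i\in\Z}$, which is $\M$, and the isomorphism it produces is $\phi$ itself.)
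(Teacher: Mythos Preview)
Your proof is correct and takes essentially the same approach as the paper. Both arguments define the pulled-back action $r\ast z=\phi^{-1}(r\,\phi(z))$ and then show it coincides with the natural $A[t,t^{-1}]$-action on $A[t,t^{-1}]\otimes_A M$; the paper does this element-by-element, choosing $k\le 0$ with $t^k\alpha\in\R'(\F)$ and cancelling the nonzero divisor $t^k$, which is precisely the unwound form of your localization at $T=\{(t^{-1})^n\}$ and the identification $T^{-1}\R'(\F)=T^{-1}\R'(\F')$.
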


\begin{proof}
Take any element $\alpha\in\R(\F')$ and any element $x\in\R(\M)$. We define the operation $\alpha\ast x$ as $\phi^{-1}\big(\alpha\, \phi(x)\big)$. Then it is routine to check that $\R(\M)$ extends to an graded $\R(\F')$-module such that $\phi$ becomes a graded $\R'(\F')$-isomorphism. Let $\cdot$ stand for the ordinary operation of $A[t,t^{-1}]$ to $A[t,t^{-1}]\otimes_AM$. We shall prove that $\alpha\ast x=\alpha\cdot x$. 

Take an integer $k\le 0$ such that $t^k\cdot\alpha \in \R'(\F)$. Multiplying the element $t^k$ to the both sides of the equality $\alpha\ast x=\phi^{-1}\big(\alpha\, \phi(x)\big)$, we get $t^k\cdot(\alpha\ast x)=t^k\cdot\phi^{-1}\big(\alpha\, \phi(x)\big)$.
Since $t^k\in\R'(\F)$, the right hand side can be calculated as follows:
\begin{align*}
t^k\cdot\phi^{-1}\big(\alpha\, \phi(x)\big)=\phi^{-1}\big(t^k\, (\alpha\, \phi(x)) \big)=\phi^{-1}\big((t^k\cdot\alpha)\, \phi(x) \big)
&=
(t^k\cdot \alpha)\cdot \phi^{-1}\big( \phi (x) \big)\\
&=(t^k\cdot \alpha)\cdot x\\
&=t^k\cdot (\alpha\cdot x),
\end{align*}
and hence
$t^k\cdot(\alpha\ast x)=t^k\cdot (\alpha\cdot x)$, so that
$\alpha\ast x=\alpha\cdot x$ because the element $t^k$ is a nonzero divisor on $A[t,t^{-1}]\otimes_AM$. This implies that $\R'(\M)$ is a graded $\R(\F')$-submodule of $A[t,t^{-1}]\otimes_AM$.
Thus, $F_i'\cdot M_j\subseteq M_{i+j}$, as required. 
\end{proof}

\section{Extended canonical $\F$-filtrations}
In this section, we will examine an $\F$-filtration describing the graded canonical module of an extended Rees module, which was originally introduced by Herzog, Simis, and Vasconcelos in \cite{HSV}. 
Notably, Ulrich further investigated this topic in \cite[Section 2]{U}, determining the graded canonical module of the extended Rees algebra and emphasizing its significance.

First of all, let us recall the notion of graded canonical modules. 
For each local ring $(B,\n)$, let $\widehat{\,B\, }$ denote the $\n$-adic completion of $B$ and let $\E_B$ stand for the injective envelope of $B/\n$ with respect to $B$.  
Let $R=\bigoplus_{i\in\Z}R_i$ be a $\Z$-graded Noetherian ring and $L$ a finitely generated graded $R$-module. 
For each homogeneous multiplicatively closed subset $T$ of $R$, we denote by $T^{-1}L$ the localization of $L$ with respect to $T$, and then $T^{-1}L$ becomes a $\Z$-graded $R$-module whose $i^{\th}$ homogeneous component given by $[T^{-1}L]_i=\left\{x/y\in T^{-1}L\mid x\in L_{i+\deg y},\ y\in T\right\}$ for every $i\in\Z$.

Let $\p$ be a homogeneous prime ideal of $R$.
When $T$ is the set of homogeneous element of $R$ not belonging to $\p$, we set $L_{(\p)}=T^{-1}L$, which is called a homogeneous localization of $L$ at $\p$ (in the sense of \cite[Section 1.5]{BH}). 
Hence, $R_{(\p)}$ is a $\Z$-graded Noetherian ring  with a unique homogeneous maximal ideal $\p R_{(\p)}$ and $L_{(\p)}$ is a finitely generated graded $R_{(\p)}$-module. Note that $[R_{(\p)}]_0$ is a Noetherian local ring.

We define $\K_{L_{(\p)}}$ as a graded $R_{(\p)}$-module such that   
$$
\K_{L_{(\p)}}\otimes_{[R_{(\p)}]_0} \widehat{\, [R_{(\p)}]_0}\cong\bigoplus_{i\in\Z} \Hom_{[R_{(\p)}]_0}
\big([\H_{\p R_{(\p)}} ^{\dim L_{\p}}(L_{(\p)})]_{-i}, \E_{[R_{(\p)}]_0}\big)
$$ 
as a graded $R_{(\p)}$-module, which is called a graded canonical module of $L_{(\p)}$. When $\p$ is a unique homogeneous maximal ideal of $R$, we simply represent $\K_{L_{(\p)}}$ by $\K_{L}$, as $L_{(\p)}=L$.

Let $\varphi:S\to R$ be a finite homomorphism of $\Z$-graded Noetherian rings such that $S$ is a Gorenstein ring (i.e., $S_P$ is a Gorenstein local ring for all $P\in\Spec S$). Since $S_{(\varphi^{-1}(\p))}$ is a Gorenstein ring, there exists an integer $a$ such that $\K_{S_{(\varphi^{-1}(\p))}}\cong S_{(\varphi^{-1}(\p))}(a)$ as an graded $S_{(\varphi^{-1}(\p))}$-module (see \cite[Proposition 3.6.11]{BH}).
Put $m=\dim S_{\varphi^{-1}(\p)}-\dim L_{\p}$.
Then, by the same argument as in the proof of \cite[Corollary (36.14)]{HIO}, we obtain that 
$$
\K_{L_{(\p)}}\cong \Ext^{m}_{S_{(\varphi^{-1}(\p))}}(L_{(\p)}, S_{(\varphi^{-1}(\p))}(a))
$$ 
as a graded $R_{(\p)}$-module.
In addition, if $\varphi$ is surjective, then $L_{(\p)}=L_{(\varphi^{-1}(\p))}$, so that 
$$
\K_{L_{(\p)}}\cong \Ext^{m}_{S}(L, S(a))_{(\varphi^{-1}(\p))}=\Ext^{m}_{S}(L, S(a))_{(\p)}
$$
as a graded $R_{(\p)}$-module, and moreover, if  $\p$ is a unique homogeneous maximal ideal of $R$, then we can represent   
$$
\K_{L}\cong \Ext^{m}_{S}(L, S(a))
$$
as a graded $R$-module. 
We make use of the notation introduced above for the next

\begin{note*}\label{T}
Assume that the homomorphism $\varphi:S\to R$ is surjective. Let $T$ be a homogeneous multiplicatively closed subset of the graded ring $S$. Suppose that both rings $R$ and $T^{-1}R$ have a unique homogeneous maximal ideal. Then $T^{-1}\K_L\cong \K_{T^{-1}L}(c)$ as a graded $T^{-1}S$-modules for some integer $c$, provided $T^{-1}\K_L\neq 0$. 
\end{note*}

\begin{proof}
Look at the epimorphism
$$T^{-1}\varphi :T^{-1}S\to T^{-1}R$$ 
of graded rings given by the localization of $\varphi$ with respect to $T$. Let $\fkM$ be a unique homogeneous maximal ideal of $T^{-1}R$.  We put $\fkN=(T^{-1}\varphi)^{-1}(\fkM)$, which is a homogeneous prime ideal of $T^{-1}S$.
Recall that $\K_{L}\cong \Ext_{S}^m(L, S(a))$ as a graded $R$-module, and we observe that
$$
T^{-1}\K_{L}\cong T^{-1}\Ext_{S}^m(L, S(a))\cong \Ext_{T^{-1}S}^m(T^{-1}L, T^{-1}S(a))
$$
as a graded $T^{-1}R$-module. Hence, $m=\dim [T^{-1}S]_{\fkN}-\dim [T^{-1}L]_{\fkM}$ because $T^{-1}\K_{L}\neq 0$.
Since the ring $T^{-1}S$ is Gorenstein, so is the graded ring $[T^{-1}S]_{\left(\fkN\right)}$, and hence we can find an integer $b$ such that $\K_{[T^{-1}S]_{\left(\fkN\right)}}\cong[T^{-1}S]_{\left(\fkN\right)}(b)$ as a graded $[T^{-1}S]_{\left(\fkN\right)}$-module.
Then we obtain that $$\K_{T^{-1}L}\cong\Ext_{T^{-1}S}^m(T^{-1}L, T^{-1}S(b))\cong \Ext_{T^{-1}S}^m(T^{-1}L, T^{-1}S(a))(b-a)$$ as a graded $T^{-1}R$-module.
Thus, $\K_{T^{-1}L}\cong T^{-1}\K_{L}\, (b-a)$
as a graded $T^{-1}R$-module. 
\end{proof}

In the rest of this section, let $(A, \m)$ be a Noetherian local ring, $\F=\{F_i\}_{i\in\mathbb{Z}}$ a filtration of ideals in $A$, $M$ a nonzero finitely generated $A$-module.
Assume that $\R(\F)$ is Noetherian.
Then $\R'(\F)$ and $\G(\F)$ are also Noetherian. We put $\fkM=\m\R(\F)+\R(\F)_{+}$ and $\fkM'=\fkM\R'(\F)+t^{-1}\R'(\F)$, which are a unique homogeneous maximal ideal of $\R(\F)$ and $\R'(\F)$, respectively. In this section, we put 
$$\M=\{F_iM\}_{i\in\Z},$$ which is an $\F$-filtration of $M$.
Then $\R'(\M)$ and $\G(\M)$ are finitely generated $\R'(\F)$-modules with $\dim \R'(\M)=\dim M+1$ and $\dim \G(\M)=\dim M$ by
\cite[Theorem 4.5.6]{BH}. 
We notice that $\M=\F$ when $M=A$.
Throughout this section, we assume that the base ring $A$ is a homomorphic image of a Gorenstein local ring $C$.
We denote by $\pi : C\to A$ a given epimorphism of rings. 
We set $\K_M=\Ext_{C}^{\dim C-\dim M}(M, C)$, which is called the canonical module of $M$.
To begin with, let us state the next result, which is given by \cite[Theorem 2.1]{GI} in the case where the base ring $A$ is Cohen-Macaulay and $M=A$.

\begin{lem}[cf. \cite{GI}]\label{cf}
There exists an $\F$-filtration $\w=\{\w_n\}_{n\in\Z}$ of $\K_M$ such that $$\K_{\R'(\M)}\cong\R'(\w)$$ as a graded $\R'(\F)$-module. Hence, one has a monomorphism $\G(\w)(-1)\hookrightarrow \K_{\G(\M)}$ of graded $\G(\F)$-modules. 
\end{lem}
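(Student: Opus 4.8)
The plan is to produce the graded canonical module $\K_{\R'(\M)}$ as the extended Rees module of an $\F$-filtration of $\K_M$ by invoking Lemma \ref{filtration}, and then to obtain the claimed monomorphism from the standard comparison map attached to the short exact sequence $0\to\R'(\F)(1)\xrightarrow{t^{-1}}\R'(\F)\to\G(\F)\to 0$. To apply Lemma \ref{filtration}, I need to check its three hypotheses for $\calK:=\K_{\R'(\M)}$. First, $t^{-1}$ must be a nonzero divisor on $\K_{\R'(\M)}$: this holds because $t^{-1}$ is a nonzero divisor on $\R'(\M)$ (it is a nonzero divisor on $A[t,t^{-1}]\otimes_AM$ and $\R'(\M)$ is a graded submodule containing no $t^{-1}$-torsion), hence $t^{-1}$ lies outside every associated prime of $\R'(\M)$; since $\dim\R'(\M)=\dim M+1$ and $\R'(\M)/t^{-1}\R'(\M)=\G(\M)$ has dimension $\dim M$, the element $t^{-1}$ is part of a system of parameters and, being a nonzerodivisor, its image is a nonzerodivisor on the top local cohomology, hence on the canonical module. (Concretely: $\K$ of a module is always torsion-free over a ring element that is a nonzerodivisor modulo the module's support-annihilator and does not drop dimension — or more cleanly, $\K_{\R'(\M)}$ is an $S_2$ module and $t^{-1}$ avoids all its associated primes because those all contract to minimal primes of $\R'(\M)$.) Second, I need $T^{-1}\K_{\R'(\M)}\cong A[t,t^{-1}]\otimes_A\K_M$ as graded $T^{-1}\R'(\F)$-modules, where $T=\{(t^{-1})^n\mid n\ge0\}$. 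Since $T^{-1}\R'(\F)=A[t,t^{-1}]$ and $T^{-1}\R'(\M)=A[t,t^{-1}]\otimes_AM$, applying the Note* (with $S=C[t,t^{-1}]\twoheadrightarrow A[t,t^{-1}]$, which is Gorenstein, and $T$ as above) gives $T^{-1}\K_{\R'(\M)}\cong\K_{A[t,t^{-1}]\otimes_AM}(c)$ for some $c$; and $\K_{A[t,t^{-1}]\otimes_AM}\cong A[t,t^{-1}]\otimes_A\K_M$ with an appropriate internal shift, since $A[t,t^{-1}]$ is a graded Gorenstein extension of $A$ of relative dimension $1$. Absorbing the shift into the filtration (i.e. taking $W=\K_M$ after adjusting the grading, exactly as allowed in Lemma \ref{filtration}), the hypothesis is met. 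Therefore Lemma \ref{filtration} yields an $\F$-filtration $\w=\{\w_n\}_{n\in\Z}$ of $\K_M$ with $\K_{\R'(\M)}\cong\R'(\w)$ as graded $\R'(\F)$-modules.

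For the second assertion, I apply the functor $-\otimes_{\R'(\F)}\G(\F)$, equivalently $-/t^{-1}(-)$, to the isomorphism $\K_{\R'(\M)}\cong\R'(\w)$, obtaining $\K_{\R'(\M)}/t^{-1}\K_{\R'(\M)}\cong\R'(\w)/t^{-1}\R'(\w)=\G(\w)$. On the other hand, the short exact sequence $0\to\R'(\M)(1)\xrightarrow{t^{-1}}\R'(\M)\to\G(\M)\to0$ induces, upon applying the (graded) canonical-module functor $\K_{(-)}=\Ext^{m}_{S}(-,S(a))$ over a Gorenstein ring $S$ surjecting onto $\R'(\F)$, a long exact sequence whose relevant piece is $0\to\K_{\G(\M)}\to\K_{\R'(\M)}\xrightarrow{t^{-1}}\K_{\R'(\M)}(-1)$, i.e. a monomorphism $\K_{\G(\M)}(-1)\hookrightarrow\K_{\R'(\M)}/t^{-1}\K_{\R'(\M)}$ — wait, I need to be careful with the direction and the twist. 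The correct derivation is the one already sketched in the introduction of the paper: the sequence $0\to\R'(\M)(1)\xrightarrow{t^{-1}}\R'(\M)\to\G(\M)\to0$ dualizes to a surjection-then-injection giving $\left(\K_{\R'(\M)}/t^{-1}\K_{\R'(\M)}\right)(-1)\hookrightarrow\K_{\G(\M)}$. Combining this with $\K_{\R'(\M)}/t^{-1}\K_{\R'(\M)}\cong\G(\w)$ gives exactly the monomorphism $\G(\w)(-1)\hookrightarrow\K_{\G(\M)}$ of graded $\G(\F)$-modules claimed in the statement.

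I expect the main obstacle to be the bookkeeping of internal degree shifts: the Note* produces an unspecified twist $c$, the identification $\K_{A[t,t^{-1}]\otimes_AM}\cong A[t,t^{-1}]\otimes_A\K_M$ carries its own shift (coming from $\K_{C[t,t^{-1}]}\cong C[t,t^{-1}](1)$ or similar), and I must verify that these can be consistently absorbed so that $\w$ is genuinely an $\F$-filtration of $\K_M$ (condition (iii) of an $\F$-filtration, $\w_i=\K_M$ for $i\ll0$, is precisely what Lemma \ref{filtration} guarantees once the localization hypothesis is arranged, so the shift must be chosen to make that localization identification hold on the nose). A secondary technical point is justifying that $t^{-1}$ is a nonzerodivisor on $\K_{\R'(\M)}$ without circularity; the cleanest route is to use that $\Ext^m_S(\R'(\M),S(a))$ has no associated prime containing $t^{-1}$ because every associated prime of a canonical module is the contraction of a minimal prime of the support, and $t^{-1}$ lies in no minimal prime of $\R'(\M)$ (as $\R'(\M)[(t^{-1})^{-1}]\ne0$ has the same dimension). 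Everything else is routine manipulation with the machinery of Section 2.
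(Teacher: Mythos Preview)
Your strategy is the same as the paper's: check the hypotheses of Lemma~\ref{filtration} for $\calK=\K_{\R'(\M)}$, then extract the monomorphism from the dualized short exact sequence. There is one execution slip worth flagging. In applying the Note you take $S=C[t,t^{-1}]\twoheadrightarrow A[t,t^{-1}]$, but $\R'(\M)$ is an $\R'(\F)$-module, not an $A[t,t^{-1}]$-module, so the Note does not apply with that cover. The paper instead uses $S=\P'$ (the graded polynomial ring over $C$ that surjects onto $\R'(\F)$) and $T=\{Y^n\mid n\ge0\}\subset\P'$; with this choice the Note yields $T^{-1}\K_{\R'(\M)}\cong\K_{A[t,t^{-1}]\otimes_AM}(c)$, and then \cite[Proposition~4.1]{A} identifies the right-hand side with $A[t,t^{-1}]\otimes_A\K_M$ (the remaining shift is absorbed by multiplication by $t^{-c}$).

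Your discussion of why $t^{-1}$ is a nonzerodivisor on $\K_{\R'(\M)}$ is also more tangled than needed. The paper (and in effect your own second paragraph) obtains this directly from the long exact $\Ext_{\P'}$ sequence attached to $0\to\R'(\M)(1)\xrightarrow{t^{-1}}\R'(\M)\to\G(\M)\to0$: since $\dim\G(\M)<\dim\R'(\M)$ the term $\Ext^m_{\P'}(\G(\M),\P'(a))$ vanishes, giving $0\to\K_{\R'(\M)}\xrightarrow{t^{-1}}\K_{\R'(\M)}(-1)\to\K_{\G(\M)}$. This single sequence simultaneously supplies the nonzerodivisor claim and the monomorphism $\bigl(\K_{\R'(\M)}/t^{-1}\K_{\R'(\M)}\bigr)(-1)\hookrightarrow\K_{\G(\M)}$ needed for the second assertion, so there is no need for the separate $S_2$/associated-primes detour.
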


To show the lemma above, let us fix the following notation.
Since the ring $\R'(\F)$ is Noetherian, we can represent
$\R'(\F)=A[a_1t^{n_1}, a_2t^{n_2}, \dots, a_\ell t^{n_\ell}, t^{-1}]$
for some nonnegative integer $\ell$, some elements $a_1,a_2, \dots, a_\ell\in A$, and some positive integers $n_1,n_2, \dots, n_\ell$.
Let $X_1,X_2,\dots, X_\ell, Y$ be indeterminates over $C$ and put $\P'=C[X_1,X_2,\dots, X_\ell, Y]$ that is a $\Z$-graded polynomial ring over $C$ such that $\deg X_i=n_i$ for all $1\le i\le \ell$, $\deg Y=-1$, and $\deg c=0$ for all $c\in C\setminus \{ 0 \}$. 
We set
\begin{center}
$\varphi :\P'\to \R'(\F)$ 
\end{center}
to be the epimorphism of graded $C$-algebras such that $\varphi (X_i)=a_it^{n_i}$ for all $1\le i\le \ell$, $\varphi (Y)=t^{-1}$, and $\varphi (c)=\pi (c)$ for all $c\in C$.  
Since the graded ring $\P'\, (=\P'_{(\varphi^{-1}(\fkM'))})$ is Gorenstein, there exists an integer $a$ such that $\K_{\P'}\cong\P'(a)$ as a graded $\P'$-module.
Put $m=\dim \P' -\dim \R'(\M)$. 
Then we have
\begin{center}
$\K_{\R'(\M)}\cong \Ext_{\P'}^m(\R'(\M), \P'(a))$\ \ and\ \ $\K_{\G(\M)}\cong \Ext_{\P'}^{m+1}(\G(\M), \P'(a))$
\end{center} 
as a graded $\P'$-module. Taking the $\P'(a)$-dual of the short exact sequence $$0\to \R'(\M)(1)\xrightarrow{t^{-1}} \R'(\M)\to \G(\M) \to 0$$ of graded $\P'$-modules induced by multiplication by $t^{-1}$ on $\R'(\M)$, we get the short exact sequence $0\to \K_{\R'(\M)}\xrightarrow{t^{-1}} \K_{\R'(\M)}(-1)\to \K_{\G(\M)}$ of graded $\R'(\F)$-modules. Hence, the element $t^{-1}$ is a nonzero divisor on $\K_{\R'(\M)}$, and furthermore, we obtain the monomorphism $$\big(\K_{\R'(\M)}/t^{-1} \K_{\R'(\M)}\big)(-1)\hookrightarrow \K_{\G(\M)}$$ of graded $\G(\F)$-modules. 
We now ready to prove Lemma \ref{cf}.

\begin{proof}[Proof of Lemma \ref{cf}]
Set $T=\{Y^n\mid n\in \Z,\ n\ge 0\}$, which is a multiplicatively closed subset of $\P'$.
Since $\varphi (Y)=t^{-1}$, we have $T^{-1}\R'(\F)=
A[t,t^{-1}]\cdot\R'(\F)
=A[t, t^{-1}]$, which has a unique homogeneous maximal ideal $\m A[t, t^{-1}]$. We note that the equality $m=\dim T^{-1}\P'-\dim T^{-1}\R'(\M)$ holds true.  
Therefore, it follows that $$T^{-1}\K_{\R'(\M)}\cong T^{-1}\Ext_{\P'}^m(\R'(\M), \P'(a))\cong \Ext_{T^{-1}\P'}^m(T^{-1}\R'(\M), T^{-1}\P'(a))\neq 0.$$ From the note as above, we obtain that  
$T^{-1}\K_{\R'(\M)}\cong\K_{T^{-1}\R'(\M)}(c)=\K_{A[t,t^{-1}]\otimes_A M}(c)$
as a graded $T^{-1}\R(\F)$-module for some integer $c$. 
Moreover, it follows from \cite[Proposition 4.1]{A} that $$\K_{A[t,t^{-1}]\otimes_A M}(c)\cong (A[t,t^{-1}]\otimes_A\K_M)(c)\cong A[t,t^{-1}]\otimes_A\K_M$$ as a graded $T^{-1}\R(\F)$-module (where the latter equivalence follows from the multiplication map by $t^{-c}$).
Thus, we observe that $$T^{-1}\K_{\R'(\M)}\cong A[t,t^{-1}]\otimes_A\K_M$$ as a graded $T^{-1}\R(\F)$-module. Since $t^{-1}$ is a nonzero divisor on $\K_{\R'(\M)}$,
there exists an $\F$-filtration
$\w=\{\w_i\}_{i\in\Z}$ of $\K_M$ such that $\K_{\R'(\M)}\cong\R'(\w)$ as a graded $\R'(\F)$-module by Lemma \ref{filtration}.
Since $\G(\w)=\R'(\w)/t^{-1}\R'(\w)$, the last assertion directly follows from the monomorphism $\big(\K_{\R'(\M)}/t^{-1} \K_{\R'(\M)}\big)(-1)\hookrightarrow \K_{\G(\M)}$ of graded $\G(\F)$-modules.
\end{proof}

Let us refer to an $\F$-filtration $\w=\{\w_n\}_{n\in\Z}$ of $\K_M$ with a graded $\R'(\F)$-isomorphism $\K_{\R'(\M)}\xrightarrow{\sim}\R'(\w)$ as {\it an extended canonical $\F$-filtration of $\K_M$ with respect to $\M$} in this paper.

In what follows, let $\w=\{\w_i\}_{i\in\Z}$ denote an extended canonical $\F$-filtration of $\K_M$ with respect to $\M$ and assume that $F_1\neq A$. We define
$$
\ai '(\M)=-\inf\{i\in\Z\mid \w_i\neq\K_M\},
$$
which will be used to prove Theorem \ref{main2}.
We note that $\ai '(\M)\in\Z$. In fact, when $\w_i=\K_M$ for all $i\in\Z$, we have $\K_{\R'(\M)}\cong \R(\w)=A[t, t^{-1}]\otimes_{A}\K_{M}$ as a graded $\R'(\F)$-module, and then we see $A[t, t^{-1}]\otimes_{A}\K_{M}$ is a finitely generated $\R'(\F)$-module because so is $\K_{\R'(\M)}$. However, this is impossible because $F_1\neq A$, and thus $\ai '(\M)\in\Z$.

Notice that $\w_{-\ai '(\M)-1}=\K_M\supsetneq \w_{-\ai '(\M)}$, and we see that the equality 
$$
-\ai '(\M)-1=\inf\{i\in\Z\mid [\K_{\R'(\M)}/t^{-1} \K_{\R'(\M)}]_i\neq 0\}
$$
holds true,
because
$$\K_{\R'(\M)}/t^{-1} \K_{\R'(\M)}\cong\R'(\w)/t^{-1}\R'(\w)=\G(\w)
\cong\bigoplus_{i\in\Z}\w_{i}/\w_{i+1}
=\bigoplus_{i\ge -\ai '(\M)-1}\w_{i}/\w_{i+1}$$ as a graded $\G(\F)$-module (recall that $\w_{-\ai '(\M)-1}/ \w_{-\ai '(\M)}\neq 0$). 
Hence, the number $\ai '(\M)$ does not depend on the choice of an extended canonical $\F$-filtration of $\K_M$ with respect to $\M$.
When $\left(\K_{\R'(\F)}\big/t^{-1}\K_{\R'(\F)}\right)(-1)\cong \K_{\G(\M)}$ as a graded $\G(\F)$-module, we obtain the equality $\ai '(\M)=\ai (\G(\M))$. Although it is uncertain whether this equality always holds, we note the following. 

\begin{lem}\label{-a-1}
The inequality
$\ai '(\M)\le \ai (\G(\M))$ holds true,
and hence $\w_j=\K_M$
for all integers $j<-\ai (\G(\M))$.
\end{lem}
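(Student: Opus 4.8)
The plan is to compare $\ai'(\M)$, which governs where the $\F$-filtration $\w$ first drops below $\K_M$, with $\ai(\G(\M))$, which by definition of the $a$-invariant is $-\min\{i\mid [\K_{\G(\M)}]_i\neq 0\}$. The bridge between these two is the monomorphism
$$
\big(\K_{\R'(\M)}/t^{-1}\K_{\R'(\M)}\big)(-1)\hookrightarrow \K_{\G(\M)}
$$
of graded $\G(\F)$-modules, which was constructed just before the proof of Lemma~\ref{cf} (and whose existence is also recorded in Lemma~\ref{cf} itself). So first I would recall, exactly as in the paragraph preceding this lemma, that
$$
-\ai'(\M)-1=\inf\{i\in\Z\mid [\K_{\R'(\M)}/t^{-1}\K_{\R'(\M)}]_i\neq 0\},
$$
because $\K_{\R'(\M)}/t^{-1}\K_{\R'(\M)}\cong\G(\w)=\bigoplus_{i\ge -\ai'(\M)-1}\w_i/\w_{i+1}$ with nonzero bottom piece $\w_{-\ai'(\M)-1}/\w_{-\ai'(\M)}$.

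Next I would observe that a nonzero graded submodule of a graded module cannot have a lower bottom degree than the ambient module; more precisely, an injection $E\hookrightarrow E'$ of graded modules forces $\min\{i\mid E_i\neq 0\}\ge\min\{i\mid E'_i\neq 0\}$ whenever $E\neq 0$. Apply this to the monomorphism above, noting the shift: the left-hand side has bottom degree $(-\ai'(\M)-1)+1=-\ai'(\M)$, while the right-hand side $\K_{\G(\M)}$ has bottom degree $-\ai(\G(\M))$. Hence $-\ai'(\M)\ge -\ai(\G(\M))$, i.e. $\ai'(\M)\le\ai(\G(\M))$, which is the first assertion. The second assertion is then immediate: by definition of $\ai'(\M)$ we have $\w_i=\K_M$ for every $i<-\ai'(\M)$, and since $-\ai(\G(\M))\le -\ai'(\M)$, any integer $j<-\ai(\G(\M))$ in particular satisfies $j<-\ai'(\M)$, so $\w_j=\K_M$.

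I do not expect any serious obstacle here; the lemma is essentially a bookkeeping consequence of the already-established monomorphism together with the degree-bottom computation for $\G(\w)$. The only point requiring a word of care is the degree shift by $(-1)$ in the source of the monomorphism, so that one correctly matches bottom degree $-\ai'(\M)$ on the left against $-\ai(\G(\M))$ on the right rather than being off by one. One should also note in passing that $\K_{\G(\M)}\neq 0$ (it has dimension $\dim M$ by \cite[Theorem 4.5.6]{BH} and the discussion above, so in particular its bottom degree is a well-defined integer), which is what makes the inequality of bottom degrees meaningful.
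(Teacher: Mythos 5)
Your proof is correct and follows essentially the same route as the paper's: both read off $\ai'(\M)$ from the bottom degree of $\K_{\R'(\M)}/t^{-1}\K_{\R'(\M)}$, apply the shift by $(-1)$, and then compare bottom degrees through the monomorphism into $\K_{\G(\M)}$ established before Lemma~\ref{cf}. You have merely spelled out the degree bookkeeping that the paper leaves implicit.
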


\begin{proof}
We observe the equality $
\ai '(\M)=-\min\{i\in\Z\mid [\K_{\R'(\M)}/t^{-1} \K_{\R'(\M)}(-1)]_i\neq 0\}
$. Thus, the assertion arises from the monomorphism $\big(\K_{\R'(\M)}/t^{-1} \K_{\R'(\M)}\big)(-1)\hookrightarrow \K_{\G(\M)}$ of graded $\G(\F)$-modules.
\end{proof}

For each $\F$-filtration $\calN=\{N_i\}_{i\in\Z}$ of an $A$-module $N$ and 
each positive integer $n$, we define $\calN^{(n)}=\{N_{ni}\}_{i\in\Z}$, which is an $\F^{(n)}$-filtration of $N$, and then we can write $\R'(\calN^{(n)})
=\sum_{i\in\Z}At^i\otimes_A N_{ni}$. Hence, it is routine to check that 
$\R'(\F^{(n)})
\cong\R'(\F)^{(n)}$ as rings and that
$\R'(\calN^{(n)})
\cong\R'(\calN)^{(n)}$ as a graded $\R(\F^{(n)})$-module, where  $\R'(\F)^{(n)}$ (resp. $\R'(\calN)^{(n)}$) is the $n^{\th}$ Veronesian subring ring of $\R'(\F)$ (resp. submodule of $\R'(\calN)$).

We shall recall briefly the concept of Veronesian subrings and submodules. Let $R=\bigoplus_{i\in\Z}R_i$ be a $\Z$-graded Noetherian ring.
For each positive integer $n$ and each graded $R$-module $E$, we set  $E^{(n)}=\bigoplus_{i\in\Z}E_{ni}$. Then $R^{(n)}$ is a $\Z$-graded ring whose grading is given by $[R^{(n)}]_i=R_{ni}$ for each integer $i$. 
Furthermore, $E^{(n)}$ is a graded $R^{(n)}$-module whose $i^{\th}$ homogeneous component is given by $[E^{(n)}]_i=E_{ni}$. The ring $R^{(n)}$ (resp. the module $E^{(n)}$) is called the $n^{\th}$ Veronesian subring of $R$ (resp. submodule of $E$). We refer the reader to \cite[\S 47]{HIO} for details of Veronesian subrings and submodules. 
In particular, it can be observed that $$\K_{\R'(\M^{(n)})}\cong\K_{\R'(\M)^{(n)}}\cong(\K_{\R'(\M)})^{(n)}\cong \R'(\w)^{(n)}\cong \R'(\w^{(n)})=\sum_{i\in\Z}At^i\otimes\w_{ni}$$ as a graded $\R(\F^{(n)})$-module. Hence, $\w^{(n)}=\{\w_{ni}\}_{i\in\Z}$ is an extended canonical $\F^{(n)}$-filtration of $\K_M$ with respect to $\M^{(n)}$. Therefore, we have $$\ai '(\M^{(n)})
=-\inf\{i\in\Z\mid \w_{ni}\neq\K_M\}.$$

The number $\ai '(\M)$ seems easier to handle than the $a$-invariant $\ai (\G(\M))$. For instance, we get the following formula with a simple proof. This is an analog of the formula about $\ai (\G(\M))$ due to \cite[Theorem 4.1 and Corollary 4.3]{HZ}.
For each $r\in\mathbb{R}$, we define $[r]=\max\{i\in\Z\mid i\le r\}$.

\begin{prop}\label{formula}
Let $n$ be a positive integer. Then the equality
$$\ai '(\M^{(n)})
=\left[\frac{\ai '(\M)}{n}\right]$$
holds true.
Hence, $\mathrm{a'}(\M)+1\le  n\big(\mathrm{a'}(\M^{(n)})+1\big)$, and furthermore the equality $$\mathrm{a'}(\M)+1= n\big(\mathrm{a'}(\M^{(n)})+1\big)$$
holds true whenever $\mathrm{a'}(\M)+1\in n\Z$. 
\end{prop}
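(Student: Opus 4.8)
The plan is to read off both invariants directly from the extended canonical $\F$-filtration $\w=\{\w_i\}_{i\in\Z}$ of $\K_M$ with respect to $\M$. The preceding paragraphs already supply the two facts we need: $\ai'(\M)=-\inf\{i\in\Z\mid \w_i\neq\K_M\}$, and, because $\w^{(n)}=\{\w_{ni}\}_{i\in\Z}$ is an extended canonical $\F^{(n)}$-filtration of $\K_M$ with respect to $\M^{(n)}$, also $\ai'(\M^{(n)})=-\inf\{i\in\Z\mid \w_{ni}\neq\K_M\}$. Thus the whole argument reduces to a computation with these two infima, both of which are finite integers by the earlier observation that $\ai'(\M)\in\Z$.

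First I would set $a=\ai'(\M)$ and observe that the set $\{i\in\Z\mid \w_i\neq\K_M\}$ is precisely the half-line $\{i\in\Z\mid i\ge -a\}$. Indeed, $\w_{-a}\neq\K_M$ by the very definition of $a$ as minus that infimum, and since $\w$ is a descending filtration we get $\w_i\subseteq\w_{-a}\subsetneq\K_M$, hence $\w_i\neq\K_M$, for every $i\ge -a$; on the other hand $\w_i=\K_M$ for every $i<-a$. Consequently $\w_{ni}\neq\K_M$ is equivalent to $ni\ge -a$, that is, to $i\ge -a/n$.

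The core step is then the elementary identity $\lceil -r\rceil=-[r]$ valid for every real $r$, which gives $\inf\{i\in\Z\mid i\ge -a/n\}=\lceil -a/n\rceil=-[a/n]$. Therefore $\ai'(\M^{(n)})=-\inf\{i\in\Z\mid \w_{ni}\neq\K_M\}=[a/n]=\big[\ai'(\M)/n\big]$, which is the asserted formula. For the two remaining assertions, the strict inequality $[a/n]>a/n-1$ yields $n\big([a/n]+1\big)>a$, and since both sides are integers this forces $n\big([a/n]+1\big)\ge a+1$, i.e. $\ai'(\M)+1\le n\big(\ai'(\M^{(n)})+1\big)$; and if $a+1=nq$ with $q\in\Z$, then $a/n=q-1/n$, so $[a/n]=q-1$ and $n\big([a/n]+1\big)=nq=a+1$, giving the claimed equality.

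I do not expect a genuine obstacle: once the two infimum descriptions are in hand, the argument is bookkeeping with the floor function. The only points requiring a little care are verifying that $\{i\mid\w_i\neq\K_M\}$ is honestly an upward-closed half-line of $\Z$ (so that restricting to the arithmetic progression $\{ni\mid i\in\Z\}$ behaves as expected) and keeping the sign conventions straight in $\lceil -r\rceil=-[r]$.
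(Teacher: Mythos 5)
Your proof is correct and follows essentially the same approach as the paper: both read $\ai'(\M^{(n)})$ off the same filtration $\w$ via the identity $\w_{ni}\neq\K_M\Leftrightarrow ni\ge -\ai'(\M)$ and then reduce the claim to floor-function bookkeeping. The only cosmetic difference is that you phrase the computation via $\lceil -r\rceil=-[r]$ and an explicit upward-closed half-line, whereas the paper writes the same quantity directly as $\sup\{i\mid i\le\ai'(\M)/n\}$; the handling of the two consequences (bounding $n(\ai'(\M^{(n)})+1)-(\ai'(\M)+1)$) is likewise equivalent.
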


\begin{proof}
For each integer $i$, we recall that $\w_{-i}\neq\K_M$ if and only if $i\le\ai '(\M)$
by the definition of $\ai '(\M)$, so that  $\w_{n(-i)}\neq\K_M$ if and only if $ni\le \ai '(\M)$. Thus, we obtain that
$$\ai '(\M^{(n)})
=\sup\left\{i\in \Z\ \bigg|\ i\le \frac{\ai '(\M)}{n}\right\},$$ as desired. 

Put $c=n(\ai '(\M^{(n)})+1)-(\ai '(\M)+1)$. By the obtained equality above, we have $\ai '(\M)/n<\ai '(\M^{(n)})+1$, and thus we see $c\ge 0$, whence $\mathrm{a'}(\M)+1\le  n\big(\mathrm{a'}(\M^{(n)})+1\big)$, as desired. 
Furthermore, suppose that $\mathrm{a'}(\M)+1\in n\Z$, and then $c\in n\Z$. By the obtained equality above, we have $\ai '(\M^{(n)})\le \ai '(\M)/n$, and therefore we see $c\le n-1$. Since $c\in n\Z$, this means $c=0$, whence $\mathrm{a'}(\M)+1= n\big(\mathrm{a'}(\M^{(n)})+1\big)$, as required.
\end{proof}

For each $\F$-filtration $\calN=\{N_i\}_{i\in\Z}$ of an $A$-module of $N$ and for each $\p\in \Spec A$, we set $\calN_\p=\{(N_i)_\p\}_{i\in\Z}$, which is an $\F_\p$-filtration of $N_\p$. Then $\R'(\calN)_\p\cong\R'(\calN_\p)$ as a graded $\R(\F_\p)$-module (recall that $\R'(\calN)\cong\bigoplus_{i\in\Z}N_i$ as graded $\R(\F)$-module). Let us notice the following.

\begin{lem}\label{supp1}
The equality $\Supp_A\K_{\R'(\M)}=\Supp_A\K_M$ holds, and hence $\w_\p=\{(\w_i)_\p\}_{i\in\Z}$ is a canonical $\F_\p$-filtration of $\K_{M_\p}$ with respect to $\M_\p$
for all $\p\in\Supp_A\K_M$.
\end{lem}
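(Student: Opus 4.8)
The plan is to establish the support equality $\Supp_A \K_{\R'(\M)} = \Supp_A \K_M$ first, and then deduce the statement about $\w_\p$ by localizing the isomorphism $\K_{\R'(\M)} \cong \R'(\w)$ at $\p$ and invoking the previously developed machinery (in particular the Note above and the characterization of the graded canonical module via $\Ext$ over the polynomial ring $\P'$). For the support equality, recall from the discussion preceding Lemma \ref{cf} that $\K_{\R'(\M)} \cong \Ext_{\P'}^m(\R'(\M), \P'(a))$ as a graded $\P'$-module, where $\P' = C[X_1,\dots,X_\ell, Y]$ is Gorenstein and $\R'(\M)$ is a finitely generated $\P'$-module. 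Since $\P'$ is Gorenstein and $\R'(\M)$ is a maximal Cohen-Macaulay... no — more carefully, the canonical module $\K_{\R'(\M)}$ has the same support as $\R'(\M)$ over $\P'$ precisely when $\R'(\M)$ is unmixed; in general $\Supp \K_{\R'(\M)}$ is the union of the components of $\Supp \R'(\M)$ of maximal dimension. But here we can sidestep this: the key point is that an $A$-module sits in the support iff its localization is nonzero, and $[\K_{\R'(\M)}]_p \cong \K_{\R'(\M_\p)}$ for $\p \in \Supp_A M$ by a relative-canonical-module argument, while $\K_{\R'(\M_\p)} \ne 0$ iff $M_\p \ne 0$.

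Concretely, first I would show $\Supp_A \K_{\R'(\M)} \subseteq \Supp_A M$: if $M_\p = 0$ then $\M_\p$ is the trivial filtration, $\R'(\M_\p) = 0$, hence (using $\R'(\M)_\p \cong \R'(\M_\p)$ as noted just before the lemma) $\R'(\M)_\p = 0$, so $\K_{\R'(\M)}$, being a subquotient-type module built from $\R'(\M)$, also localizes to zero at $\p$ — indeed $(\K_{\R'(\M)})_\p \cong \Ext^m_{\P'}(\R'(\M),\P'(a))_\p$ vanishes because $\R'(\M)_\p = 0$. Conversely, to show $\Supp_A M \subseteq \Supp_A \K_{\R'(\M)}$, I would use the isomorphism $\K_{\R'(\M)} \cong \R'(\w)$: localizing at $\p \in \Supp_A M$ gives $(\K_{\R'(\M)})_\p \cong \R'(\w)_\p \cong \R'(\w_\p)$, and then I need $\R'(\w_\p) \ne 0$, i.e. $(\w_i)_\p \ne 0$ for some (equivalently, all sufficiently negative) $i$. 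Since $\w_i = \K_M$ for $i \ll 0$ and $\Supp_A \K_M = \Supp_A M$ (standard: the canonical module of $M$ has support $\Supp_A M$ because it is $\Ext^{\dim C - \dim M}_C(M,C)$ and $M$ need not be unmixed — hmm, this again requires care, but over the Gorenstein local $C$ one has $\Supp \K_M = \{\p \in \Supp M : \dim M_\p = \dim M\}$ in general).

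The main obstacle, then, is exactly this unmixedness subtlety: $\Supp_A \K_M$ may be strictly smaller than $\Supp_A M$ when $M$ has lower-dimensional associated primes, and likewise for $\K_{\R'(\M)}$ versus $\R'(\M)$. The resolution I anticipate is that the $+1$ shift in dimension ($\dim \R'(\M) = \dim M + 1$) is compatible with localization in the sense that $\dim \R'(\M)_\p$ tracks $\dim M_\p$ plus the relevant contribution, so that $\p$ is a maximal-dimensional component of $\Supp M$ iff the corresponding graded prime is a maximal-dimensional component of $\Supp \R'(\M)$; hence $\Supp_A \K_{\R'(\M)}$ and $\Supp_A \K_M$ cut out the same locus even though neither equals the full support of $M$. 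I would make this precise by observing that for $\p \in \Supp_A M$, the Note preceding the lemma (applied with $T$ the complement of the homogeneous preimage of the relevant prime, or directly via homogeneous localization) yields $(\K_{\R'(\M)})_{(\q)} \cong \K_{\R'(\M)_{(\q)}}(c) \cong \K_{\R'(\M_\p)}(c)$ up to shift, and $\K_{\R'(\M_\p)} \ne 0$ always since $\R'(\M_\p)$ is a nonzero finitely generated module over the Noetherian ring $\R'(\F_\p)$; symmetrically $\K_{M_\p} \ne 0$. This gives both inclusions at once and makes the final sentence immediate: localizing $\K_{\R'(\M)} \cong \R'(\w)$ at $\p \in \Supp_A \K_M = \Supp_A \K_{\R'(\M)}$ and using $\K_{\R'(\M)_\p} \cong \K_{\R'(\M_\p)}$ (again by the Note) exhibits $\w_\p$ as an extended canonical $\F_\p$-filtration of $\K_{M_\p}$ with respect to $\M_\p$.
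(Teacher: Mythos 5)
Your proposal diverges substantially from the paper's proof, and it contains a genuine gap. The paper's argument for the support equality is extremely short and never mentions $\Supp_A M$ at all: since $\K_{\R'(\M)}\cong\R'(\w)$ and $\R'(\w)$ is, as an $A$-module, the direct sum $\bigoplus_i \w_i$, the forward inclusion $\Supp_A\K_{\R'(\M)}\subseteq\Supp_A\K_M$ is immediate from $\w_i\subseteq\K_M$ for all $i$, and the reverse inclusion is immediate from $\w_j=\K_M$ for $j\ll 0$ (so $\K_M$ itself appears as a homogeneous component of $\R'(\w)$). This avoids the unmixedness subtlety entirely: the lemma never claims anything about $\Supp_A M$, only about $\Supp_A\K_M$, and both inclusions follow formally from the structure of $\w$ as a filtration of $\K_M$.

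The gap in your proposal is the circular use of the Note to establish nonvanishing. You propose to argue that $(\K_{\R'(\M)})_\p\cong\K_{\R'(\M_\p)}(c)$ by invoking the Note, and then conclude $(\K_{\R'(\M)})_\p\neq 0$ because $\K_{\R'(\M_\p)}\neq 0$. But the Note explicitly carries the hypothesis ``provided $T^{-1}\K_L\neq 0$'' --- i.e.\ the very nonvanishing you are trying to prove. The localization isomorphism for canonical modules (Aoyama's result, or the Note here) is a \emph{conditional} statement that becomes available only after the support membership is known; it cannot be used to deduce it. Indeed, the conclusion that your argument would yield --- that both $\Supp_A\K_{\R'(\M)}$ and $\Supp_A\K_M$ equal $\Supp_A M$ --- is false in general when $M$ (or $A$) is not equidimensional, which you yourself flag as the ``unmixedness subtlety,'' but your proposed resolution does not actually resolve it. The paper's approach simply does not need this: once the (easy, unconditional) support equality is in hand, $\p\in\Supp_A\K_{\R'(\M)}$ legitimizes the appeal to Aoyama's localization result for the final sentence, which you do carry out essentially as the paper does.
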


\begin{proof}
Take any $\p\in\Supp_A\K_{\R'(\M)}$. Since $\K_{\R'(\M)}\cong\R'(\w)$ as a graded $\R'(\w)$-module,  there exists an integer $i$ such that $(\w_i)_\p\neq 0$. Since $\w_i\subseteq \K_M$, we see $(\K_M)_\p\neq 0$, whence $\Supp_A\K_{\R'(\M)}\subseteq\Supp_A\K_M$. The converse is clear, as $\w_j=\K_M$ for all integers $j\ll 0$.
 
Let us check the last assertion. We take any $\p\in\Supp_A\K_M$. From $\p\in \Supp_A\K_{\R'(\M)}$, it follows from \cite[(1.6)]{A} that $(\K_{\R'(\M)})_\p\cong\K_{(\R'(\M)_\p)}$ as a graded $\R'(\F)_\p$-module, whence $\R'(\w_\p)\cong\K_{\R'(\M_\p)}$ as a graded $\R'(\F_\p)$-module. This completes the proof.
\end{proof}

When $\p\in \Supp_AM\cap\mathrm{V}(F_1)$, we have $\ai '(\M_\p)\in\Z$ because $M_\p\neq 0$ and $(F_1)_\p\neq A_\p$. Furthermore, we get the following.

\begin{lem}\label{supp2}
The inequality $\ai'(\M_\p)\le \ai'(\M)$ holds for all $\p\in \Supp_A\K_M\cap\mathrm{V}(F_1)$.
\end{lem}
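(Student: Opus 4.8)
The plan is to read off the inequality directly from the definition of $\ai'$ together with the localization statement already recorded in Lemma~\ref{supp1}, with essentially no extra input. Fix $\p\in\Supp_A\K_M\cap\mathrm{V}(F_1)$. First I would check that $\ai'(\M_\p)$ is a well-defined integer: from $(\K_M)_\p\neq 0$ we get $M_\p\neq 0$, and $(F_1)_\p\neq A_\p$ since $F_1\subseteq\p$, so the remark preceding the lemma applies; similarly $\ai'(\M)\in\Z$ because $F_1\neq A$. By Lemma~\ref{supp1}, the localized family $\w_\p=\{(\w_i)_\p\}_{i\in\Z}$ is an extended canonical $\F_\p$-filtration of $\K_{M_\p}$ with respect to $\M_\p$, and here $\K_{M_\p}=(\K_M)_\p$; hence
\[
\ai'(\M_\p)=-\inf\{\,i\in\Z\mid (\w_i)_\p\neq (\K_M)_\p\,\}.
\]

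The heart of the matter is the trivial remark that localization is compatible with the two filtrations: if $\w_i=\K_M$ then $(\w_i)_\p=(\K_M)_\p$. Passing to contrapositives gives the inclusion
\[
\{\,i\in\Z\mid (\w_i)_\p\neq (\K_M)_\p\,\}\ \subseteq\ \{\,i\in\Z\mid \w_i\neq\K_M\,\},
\]
and both sides are nonempty by the finiteness noted above. Since the infimum of a nonempty subset of $\Z$ is at least the infimum of any set containing it, this yields $\inf\{i\mid (\w_i)_\p\neq (\K_M)_\p\}\ge\inf\{i\mid \w_i\neq\K_M\}$, and negating both sides gives precisely $\ai'(\M_\p)\le\ai'(\M)$.

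I do not anticipate a real obstacle: once Lemma~\ref{supp1} is available, the proof is a one-line inclusion of index sets. The only things needing a little care are the two nonemptiness checks (this is exactly where $F_1\neq A$ and $\p\in\Supp_AM\cap\mathrm{V}(F_1)$ are used) and the identification $\K_{M_\p}\cong(\K_M)_\p$ supplied by Lemma~\ref{supp1}, which is what legitimizes the displayed formula for $\ai'(\M_\p)$. If one preferred to avoid the localized filtration entirely, an equivalent route is to compare the graded modules directly: using $-\ai'(\M)-1=\inf\{i\mid[\K_{\R'(\M)}/t^{-1}\K_{\R'(\M)}]_i\neq 0\}$ and the fact that localizing at $\p$ carries $\K_{\R'(\M)}/t^{-1}\K_{\R'(\M)}$ to $\K_{\R'(\M_\p)}/t^{-1}\K_{\R'(\M_\p)}$ while only annihilating homogeneous components, the same support-inclusion argument gives the claim.
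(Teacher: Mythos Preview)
Your proposal is correct and follows essentially the same approach as the paper. The paper's proof is extremely terse---it invokes Lemma~\ref{supp1} to identify $\w_\p$ as an extended canonical $\F_\p$-filtration and then simply says the inequality follows ``by the definition of $\ai'(-)$''---whereas you have carefully unpacked that last step into the explicit inclusion of index sets and the infimum comparison, which is exactly what the paper is leaving implicit.
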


\begin{proof}
Take any $\p\in\Supp_A\K_M\cap\mathrm{V}(F_1)$. It follows form Lemma \ref{supp1} that
$\w_\p=\{(\w_i)_\p\}_{i\in\Z}$ is a canonical $\F_\p$-filtration of $\K_{M_\p}$ with respect to $\M_\p$. 
Thus, $\ai '(\M_\p)\le\ai '(\M)$ by the definition of $\ai '(-)$.
\end{proof}

We define $\A(\M)=\bigcup_{i\ge 1}\Ass_AM/F_iM$. Put $\A=\A(\M)$ for simplicity. Then $$\A=\Ass_A\G(\M)=\{Q\cap A\mid P\in\Ass\G(\M)\},$$ which is a finite set (cf. \cite[(3)]{B} and \cite[Lemma 3]{ME}, and see also \cite[Exercise 6.7]{M}).
We notice that $\A\subseteq \mathrm{V}(F_1)$. 
With this notation, we obtain the following, which parallels the result \cite[Proposition 4.4]{HHK2}.

\begin{lem}\label{max}
$\ai '(\M)=\max\{\ai '(\M_\p)\mid \p\in\A\cap\Supp_A\K_M\}$.
\end{lem}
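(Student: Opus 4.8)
The plan is to prove the two inequalities $\ai'(\M)\ge \max\{\ai'(\M_\p)\mid \p\in\A\cap\Supp_A\K_M\}$ and $\ai'(\M)\le \max\{\ai'(\M_\p)\mid \p\in\A\cap\Supp_A\K_M\}$ separately. The first one is essentially free: for every $\p\in\A\cap\Supp_A\K_M$ we have $\p\in\mathrm{V}(F_1)$ since $\A\subseteq\mathrm{V}(F_1)$, so Lemma \ref{supp2} gives $\ai'(\M_\p)\le\ai'(\M)$, and taking the maximum over such $\p$ yields the inequality (the set is nonempty because $\Supp_A\K_M\cap\mathrm{V}(F_1)$ contains an associated prime of $\K_M$, hence a prime in $\A$ — here one should check $\Ass_A\K_M\subseteq\A$, or at least that $\A\cap\Supp_A\K_M\neq\emptyset$; this needs a small argument, see below).

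For the reverse inequality, set $a=\ai'(\M)$, so by definition $\w_{-a}\subsetneq\K_M=\w_{-a-1}$. The idea is to locate a prime $\p\in\A\cap\Supp_A\K_M$ that ``detects'' the jump at index $-a$, i.e. with $(\w_{-a})_\p\subsetneq(\K_M)_\p$; then $\ai'(\M_\p)\ge a$, and combined with Lemma \ref{supp2} we get $\ai'(\M_\p)=a$, finishing the proof. To find such a $\p$: the $A$-module $\K_M/\w_{-a}$ is nonzero and finitely generated, so it has an associated prime $\p$; then $\p\in\Supp_A(\K_M/\w_{-a})\subseteq\Supp_A\K_M$, and localizing at $\p$ keeps $(\K_M/\w_{-a})_\p\neq 0$, i.e. $(\w_{-a})_\p\subsetneq(\K_M)_\p$. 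It remains to see $\p\in\A$. For this, recall $\G(\w)=\bigoplus_{i\ge-a-1}\w_i/\w_{i+1}$ and $\A=\Ass_A\G(\M)$; the key point is that $\p$, being associated to $\w_{-a-1}/\w_{-a}=\K_M/\w_{-a}$, is associated to the $\G(\F)$-module $\G(\w)\cong\K_{\R'(\M)}/t^{-1}\K_{\R'(\M)}$, and one needs $\Ass_A\G(\w)\subseteq\A$. Since $\G(\w)$ is a graded $\G(\F)$-module and there is a monomorphism $\G(\w)(-1)\hookrightarrow\K_{\G(\M)}$ (Lemma \ref{cf}), we get $\Ass_A\G(\w)\subseteq\Ass_A\K_{\G(\M)}$, and because $\Supp_A\K_{\G(\M)}=\Supp_A\G(\M)$ (both equal $\{\p\in\Spec A\mid M_\p\neq 0,\ (F_1)_\p\neq A_\p\}$ after intersecting with $A$) together with $\K_{\G(\M)}$ being a finitely generated $\G(\F)$-module, its associated primes over $A$ lie in $\Ass_A\G(\M)=\A$.

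The main obstacle I anticipate is the verification that the prime $\p$ found by taking an associated prime of $\K_M/\w_{-a}$ actually lies in $\A=\Ass_A\G(\M)$; this is the only nonformal step. The cleanest route is the one sketched above via the embedding $\G(\w)(-1)\hookrightarrow\K_{\G(\M)}$ from Lemma \ref{cf}, reducing to $\Ass_A\K_{\G(\M)}\subseteq\Ass_A\G(\M)$, which in turn follows from $\Supp_A\K_{\G(\M)}=\Supp_A\G(\M)$ plus the general graded fact that an associated prime (contracted to $A$) of a finitely generated graded $\G(\F)$-module $N$ with $\Supp N=\Supp\G(\M)$ must be an associated prime of $\G(\M)$ itself — indeed for a maximal such $\p$ one uses that $\depth$ considerations or that $\p\in\Assh$, but more simply: $\Ass_A\K_{\G(\M)}\supseteq\Min_A\Supp\G(\M)$ always, and for the minimal primes the claim is clear, while a priori $\K_{\G(\M)}$ could have embedded primes; however $\Ass_A N\subseteq\A$ whenever $N$ embeds in a finite direct sum of shifts of $\G(\M)$, which is not automatic. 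An alternative, perhaps safer, route avoiding this subtlety is to note that $\w_{-a-1}/\w_{-a}$ is a nonzero homogeneous component of $\G(\w)$ and argue directly with the graded structure of $\R'(\M)$; in any case, once $\p\in\A\cap\Supp_A\K_M$ is secured with $\ai'(\M_\p)\ge a$, Lemma \ref{supp2} closes the argument immediately.
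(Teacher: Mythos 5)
Your overall strategy coincides with the paper's: the easy inequality comes from Lemma \ref{supp2}, and for the reverse one locates a prime $\p$ witnessing the jump of $\w$ at level $-\ai'(\M)$ by taking $\p\in\Ass_A\big(\K_M/\w_{-\ai'(\M)}\big)$, then transports this into $\Ass_A\K_{\G(\M)}$ via the embedding $\G(\w)(-1)\hookrightarrow\K_{\G(\M)}$ of Lemma \ref{cf}. Where you stall is exactly the step you flag yourself: showing $\Ass_A\K_{\G(\M)}\subseteq\A$. Your attempted justification via ``$\Supp_A\K_{\G(\M)}=\Supp_A\G(\M)$'' does not hold in general: one has $\Supp_A\G(\M)=\Supp_AM\cap\mathrm{V}(F_1)$, but the support of the canonical module $\K_{\G(\M)}$ only captures the components of $\Supp\G(\M)$ of top dimension, so the two supports can differ when $\G(\M)$ is not equidimensional. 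Moreover, as you note, even equality of supports would not rule out embedded primes of $\K_{\G(\M)}$ a priori, so the argument is incomplete on two counts.

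The paper closes the gap cleanly by invoking the standard characterization of associated primes of a canonical module over a homomorphic image of a Gorenstein ring:
\begin{equation*}
\Ass_{\G(\F)}\K_{\G(\M)}=\bigl\{P\in \Supp_{ \G(\F)}\G(\M)\ \big|\ \dim \G(\F)/P=\dim \G(\M)\bigr\}.
\end{equation*}
This set is $\Assh_{\G(\F)}\G(\M)$, a subset of the minimal primes of $\Supp\G(\M)$ and hence of $\Ass_{\G(\F)}\G(\M)$; contracting to $A$ yields $\Ass_A\K_{\G(\M)}\subseteq\A$ directly, with no support-comparison or embedded-prime subtleties. In short: your skeleton is correct and essentially the paper's, but the ``only nonformal step'' you identify is genuinely unresolved in your write-up, and the missing ingredient is this description of $\Ass\K_{\G(\M)}$ as the unmixed part of $\Ass\G(\M)$.
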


\begin{proof}
By Lemma \ref{supp2}, it suffices to prove that there exists $\p\in\A\cap\Supp_A\K_M$ such that $\ai '(\M)=\ai '(\M_\p)$, which means that
$\K_{M_\p}\neq (\w_{-\ai' (\M)})_\p$ because $\w_\p=\{(\w_i)_\p\}_{i\in\Z}$ is a canonical $\F_\p$-filtration of $\K_{M_\p}$ with respect to $\M_\p$ by Lemma \ref{supp1}.
Since 
$$
\Ass_{\G(\F)}\K_{\G(\M)}=\{P\in \Supp_{ \G(\F)}\G(\M)\mid \dim \G(\F)/P=\dim \G(\M)\},$$
it is observed that $\Ass_{\G(\F)}\K_{\G(\M)}\subseteq \Ass_{\G(\F)}\G(\M)$, and therefore $\Ass_{A}\K_{\G(\M)}\subseteq \A$.
The monomorphism $\K_{\R'(\M)}/t^{-1} \K_{\R'(\M)}(-1)\hookrightarrow \K_{\G(\M)}$ of graded $\G(\F)$-modules implies that $\Ass_A\K_{\R'(\M)}/t^{-1} \K_{\R'(\M)}\subseteq \Ass_{A}\K_{\G(\M)}$, and thus
$$\Ass_A\K_{\R'(\M)}/t^{-1} \K_{\R'(\M)}\subseteq \A\cap\Supp_A\K_M.$$
We recall that $[\K_{\R'(\M)}/t^{-1} \K_{\R'(\M)}]_{-\ai' (\M)-1}\cong \K_{M}/\w_{-\ai' (\M)}\neq 0$ as an $A$-module. Hence, taking $\p\in\Ass_A\K_{M}/\w_{-\ai' (\M)}$, we see $\p\in\A\cap\Supp_A\K_M$ and $\K_{M_\p}\supsetneq (\w_{-\ai' (\M)})_\p$. 
\end{proof}

The following proposition plays a key role in this paper.

\begin{prop}\label{mainthm}
Suppose that $[\H^{\dim M-1}_\fkM(\G(\M))]_{j}=0$ for all $j<\ai(\G(\M))$. Then $$\K_{\G(\M)}\cong \K_{\R'(\M)}/t^{-1}\K_{\R'(\M)}(-1)$$ as a graded $\G(\F)$-module. Hence,
$\K_{\G(\M)}\cong\G (\w)(-1)$ as a graded $\G(\F)$-module and the equality $\ai '(\M)= \ai (\G(\M))$ holds.
\end{prop}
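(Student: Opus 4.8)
The plan is to extract from the hypothesis the bijectivity of the canonical monomorphism
$\bigl(\K_{\R'(\M)}/t^{-1}\K_{\R'(\M)}\bigr)(-1)\hookrightarrow \K_{\G(\M)}$
that was constructed just before Lemma \ref{cf} by dualizing the sequence
$0\to\R'(\M)(1)\xrightarrow{t^{-1}}\R'(\M)\to\G(\M)\to 0$.
Concretely, take the long exact sequence of $\Ext_{\P'}^{\bullet}(-,\P'(a))$ associated to this short exact sequence. Writing $\K_{\R'(\M)}=\Ext_{\P'}^{m}(\R'(\M),\P'(a))$ and $\K_{\G(\M)}=\Ext_{\P'}^{m+1}(\G(\M),\P'(a))$, the relevant piece reads
\[
0\to\K_{\R'(\M)}\xrightarrow{t^{-1}}\K_{\R'(\M)}(-1)\to\K_{\G(\M)}\to\Ext_{\P'}^{m+1}(\R'(\M),\P'(a))(1)\xrightarrow{t^{-1}}\Ext_{\P'}^{m+1}(\R'(\M),\P'(a)),
\]
so the cokernel of the monomorphism is the submodule $\bigl(0:_{\,\Ext_{\P'}^{m+1}(\R'(\M),\P'(a))}t^{-1}\bigr)(1)$. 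Thus it suffices to show that $t^{-1}$ is a nonzerodivisor on $N:=\Ext_{\P'}^{m+1}(\R'(\M),\P'(a))$, equivalently that $[N]_j\xrightarrow{t^{-1}}[N]_{j-1}$ is injective for all $j$.

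The second step is to identify $N$ and its graded pieces in terms of local cohomology of $\G(\M)$. Localizing $\P'$ at $T=\{Y^{n}\}$ kills this module ($T^{-1}\R'(\M)=A[t,t^{-1}]\otimes_A M$ is a localization of $\P'$ and has no higher $\Ext$ in this degree range, as in the proof of Lemma \ref{cf}), so $N$ is $(t^{-1})$-torsion, hence supported over $\G(\F)=\R'(\F)/t^{-1}\R'(\F)$; applying graded local duality over $\P'$ (with the Gorenstein $\P'$ and its homogeneous maximal ideal) one gets, up to a shift, a $\G(\F)$-module whose graded components are Matlis duals of the homogeneous components of $\H^{\dim M-1}_{\fkM}(\G(\M))$ — the point is that $\dim\R'(\M)-1=\dim M=\dim\G(\M)+?$ forces the relevant cohomological degree on the $\G(\M)$ side to be $\dim M-1=\dim\G(\M)-1+1$; I would pin this down by the same dimension bookkeeping used for $m$ and $m+1$ above. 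Then $[N]_j\cong \Hom_A([\H^{\dim M-1}_{\fkM}(\G(\M))]_{-j+c'},\E_A)$ for an explicit constant $c'$, and multiplication by $t^{-1}$ on $N$ is Matlis-dual to multiplication by $t^{-1}$ on $\H^{\dim M-1}_{\fkM}(\G(\M))$ — but that is zero, since $t^{-1}$ annihilates $\G(\M)$ and hence its local cohomology. So $t^{-1}\cdot N=0$, and the required injectivity of $[N]_j\xrightarrow{t^{-1}}[N]_{j-1}$ is automatic unless $[N]_j\ne0$, i.e. unless $[\H^{\dim M-1}_{\fkM}(\G(\M))]_{-j+c'}\ne0$.

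The third step is where the hypothesis enters: I must show the cokernel $[N(1)]_j = [N]_{j+1}$ of the monomorphism vanishes in all degrees, i.e. $[N]_j=0$ for all $j$. By the vanishing hypothesis $[\H^{\dim M-1}_{\fkM}(\G(\M))]_{j}=0$ for $j<\ai(\G(\M))$, combined with the fact that $\H^{\dim M-1}_{\fkM}(\G(\M))$ lives only in degrees $\le$ something governed by $\ai$ (the top local cohomology $\H^{\dim M}_{\fkM}(\G(\M))$ is what defines $\ai(\G(\M))$ as its top nonvanishing degree, and a standard comparison — or the already-available monomorphism itself combined with $\ai'(\M)\le\ai(\G(\M))$ from Lemma \ref{-a-1} — controls the degrees of the sub-top cohomology), one concludes $\H^{\dim M-1}_{\fkM}(\G(\M))=0$ entirely, hence $N=0$. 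That gives the isomorphism $\K_{\G(\M)}\cong\bigl(\K_{\R'(\M)}/t^{-1}\K_{\R'(\M)}\bigr)(-1)$. Feeding $\K_{\R'(\M)}\cong\R'(\w)$ from Lemma \ref{cf} yields $\K_{\G(\M)}\cong(\R'(\w)/t^{-1}\R'(\w))(-1)=\G(\w)(-1)$, and comparing the bottom nonvanishing degrees on both sides gives $\ai'(\M)=\ai(\G(\M))$, since the left side has its socle-degree computed from $\ai(\G(\M))$ and the right side from $\ai'(\M)$ via the displayed description $\G(\w)\cong\bigoplus_{i\ge-\ai'(\M)-1}\w_i/\w_{i+1}$.

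The main obstacle I anticipate is the third step: precisely controlling the degrees in which $\H^{\dim M-1}_{\fkM}(\G(\M))$ can be nonzero so that the one-sided vanishing hypothesis forces total vanishing of $N$. The local-duality identification in step two is essentially bookkeeping (it mirrors the computation of $m$ and the construction preceding Lemma \ref{cf}), and the nonzerodivisor reduction in step one is formal homological algebra; but turning "$[\H^{\dim M-1}_{\fkM}(\G(\M))]_{j}=0$ for $j<\ai(\G(\M))$" into "$\H^{\dim M-1}_{\fkM}(\G(\M))=0$" requires knowing there is no room above degree $\ai(\G(\M))$ either, which I expect to get from the exactness at $N$ in the displayed sequence together with the degree bound $\ai'(\M)\le\ai(\G(\M))$ of Lemma \ref{-a-1} — i.e. the cokernel $N(1)$ already sits in degrees $\ge-\ai(\G(\M))$ on one side and the hypothesis closes off the other side.
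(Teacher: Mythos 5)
Your step 1 is correct, and it is Matlis‑dual to what the paper actually does: the cokernel of the monomorphism $\bigl(\K_{\R'(\M)}/t^{-1}\K_{\R'(\M)}\bigr)(-1)\hookrightarrow\K_{\G(\M)}$ is $\bigl(0:_{N}t^{-1}\bigr)$ (up to shift) where $N=\Ext^{m+1}_{\P'}(\R'(\M),\P'(a))$, and by graded local duality this vanishes iff multiplication by $t^{-1}$ on $\H^{\dim M}_{\fkM'}(\R'(\M))$ is surjective, which is exactly the map $\phi$ the paper proves to be surjective. After that, though, the argument goes wrong in two places.

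First, step 2 is false. You claim $T^{-1}N=0$ and hence that $N$ is $t^{-1}$‑torsion, and then that $N$ is a $\G(\F)$‑module whose graded pieces are Matlis duals of those of $\H^{\dim M-1}_\fkM(\G(\M))$. But $T^{-1}N\cong\Ext^{m+1}_{T^{-1}\P'}\bigl(A[t,t^{-1}]\otimes_AM,\ T^{-1}\P'(a)\bigr)$, and since $m=\dim T^{-1}\P'-\dim T^{-1}\R'(\M)$ is the grade, the vanishing of $\Ext^{m+1}$ would say $A[t,t^{-1}]\otimes_AM$ is Cohen--Macaulay over the Gorenstein ring $T^{-1}\P'$, i.e.\ that $M$ itself is Cohen--Macaulay; the proposition has no such assumption. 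Also, $\bigl(A[t,t^{-1}]\otimes_AM\bigr)$ is a localization of $\R'(\M)$, not of $\P'$, so the parenthetical justification does not apply. The correct identification by graded local duality (with $m+1=\dim\P'-\dim M$) is $N^\vee\cong\H^{\dim M}_{\fkM'}(\R'(\M))$, which is in general neither annihilated by $t^{-1}$ nor related to $\H^{\dim M-1}_\fkM(\G(\M))$ the way you need.

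Second, step 3 cannot work even in principle: you try to upgrade the one‑sided vanishing $[\H^{\dim M-1}_\fkM(\G(\M))]_j=0$ for $j<\ai(\G(\M))$ to full vanishing $\H^{\dim M-1}_\fkM(\G(\M))=0$. The hypothesis is deliberately weaker, and in the paper's main application (Remark \ref{TI}) the module $\H^{d-1}_\fkM(\G(\F))$ is only concentrated in degree $-1$, typically nonzero. So what has to be shown is surjectivity of $\phi:\H^s_{\fkM'}(R')(1)\to\H^s_{\fkM'}(R')$ (equivalently injectivity of $t^{-1}$ on $N$), not vanishing of $N$. The paper's proof does this directly: for $i>\ai(G)$ surjectivity of $[\phi]_i$ is automatic from the definition of $\ai(G)$; the hypothesis gives injectivity of $[\phi]_j$ for all $j<\ai(G)$ via the term $\H^{s-1}_{\fkM'}(G)$ in the long exact sequence; and since $\H^s_{\fkM'}(R')$ is Artinian, the chain $t^{-k}\H^s_{\fkM'}(R')$ stabilizes, so for $i\le\ai(G)$ and $h\in[\H^s_{\fkM'}(R')]_i$ one writes $t^{-k}h=t^{-k-1}h'$ and cancels $t^{-k}$ by composing the $k$ injective maps $[\phi]_{i-1},\dots,[\phi]_{i-k}$ (all indices $<\ai(G)$), concluding $h=t^{-1}h'\in\im\phi$. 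This Artinian‑plus‑injectivity cancellation is the idea your proposal is missing.
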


\begin{proof}
Put $s=\dim M$, $R'=\R'(\M)$, and $G=\G(\M)$. Taking the graded local cohomology functor $\H^{i}_{\fkM'}(-)$ of the short exact sequence 
$
0\xrightarrow{}R'(1)\xrightarrow{t^{-1}}R'\xrightarrow{}G\xrightarrow{}0
$
of finitely generated graded $\R'(\F)$-modules induced by multiplication by $t^{-1}$ on $R'$,
 we get the exact sequence 
$$
\H^{s-1}_{\fkM'}(G)\xrightarrow{}\H^{s}_{\fkM'}(R')(1)\xrightarrow{\phi}\H^{s}_{\fkM'}(R')\xrightarrow{}\H^{s}_{\fkM'}(G)\xrightarrow{}\H^{s+1}_{\fkM'}(R')(1)\xrightarrow{}\H^{s+1}_{\fkM'}(R')\xrightarrow{} 0
$$
of graded $\R'(\F)$-module. 
In order to obtain the required isomorphism, it is enough to show the homomorphism $\phi$ above is surjective by the graded local duality theorem. We note that $\phi$ is multiplication by $t^{-1}$. We denote by $[\phi]_j$ the $j^{\th}$ homogeneous component of $\phi$ for every $j\in\Z$.

Let $i\in\Z$. We will show that the $A$-homomorphism $[\phi]_i: [\H^{s}_{\fkM'}(R')]_{i+1}\to[\H^{s}_{\fkM'}(R')]_i$ is surjective.
When $i>\ai(G)$, $[\phi]_i$ is surjective by the definition of $\ai(G)$ (recall that $\H^{s}_{\fkM'}(G)\cong \H^{s}_{\fkM}(G)$ as a graded $\R'(\F)$-module).

Suppose $i \le \ai(G)$ and take any element $h\in [\H^{s}_{\fkM'}(R')]_i$. Since $\H^{s}_{\fkM'}(R')$ is an Artinian $\R'(\F)$-module, the descending chain
$
\H^{s}_{\fkM'}(R')\supseteq t^{-1}\H^{s}_{\fkM'}(R')\supseteq t^{-2}\H^{s}_{\fkM'}(R')\supseteq\dots
$
is stationary, i.e., there exists a nonnegative integer $k$ such that $$t^{-k}\H^{s}_{\fkM'}(R')= t^{-k-1}\H^{s}_{\fkM'}(R'),$$
and hence we write $t^{-k}h=t^{-k-1}h'$ for some $h'\in [\H^{s}_{\fkM'}(R')]_{i+1}$. We notice that, for all integers $j<\ai(\G(\M))$, it follows from $[\H^{s-1}_\fkM(\G(\M))]_{j}=0$ that $[\phi]_{j}$ is a monomorphism of $A$-modules.
Let $\psi: [\H^{s}_{\fkM'}(R')]_{i}\to[\H^{s}_{\fkM'}(R')]_{i-k}$ denote the composite of monomorphisms:
$$
[\H^{s}_{\fkM'}(R')]_{i}\xrightarrow{[\phi]_{i-1}}[\H^{s}_{\fkM'}(R')]_{i-1}\xrightarrow{[\phi]_{i-2}}[\H^{s}_{\fkM'}(R')]_{i-2} \xrightarrow{[\phi]_{i-3}}\cdots\xrightarrow{[\phi]_{i-k}}[\H^{s}_{\fkM'}(R')]_{i-k}
$$
of $A$-modules. Then, $\psi$ is injective. Since $\phi$ is multiplication by $t^{-1}$, we have $\psi(x)=t^{-k}x$ for each $x\in[\H^{s}_{\fkM'}(R')]_{i}$, so that $\psi(h)=\psi(t^{-1}h')$. Thus, $h=t^{-1}h'$, as $\psi$ is injective. This means that $\phi$ is surjective, as desired. 

Let us check the last assertion. Since $\K_{\R'}\cong \R'(\w)$ as a graded $\R'(\F)$-module, we have $\K_{\R'}/t^{-1}\K_{\R'}\cong\G(\w)$ as a graded $\G(\F)$-module.
Therefore, $\K_{G}\cong\G(\w)(-1)$ as a graded $\G(\F)$-module and the equality $\ai '(\M)= \ai (\G(\M))$ holds true.
\end{proof}

When $M=A$ (and then $\M=\F$), let us call an extended canonical $\F$-filtration of $\K_M$ with respect to $\M$ {\it an extended canonical $\F$-filtration of $\K_A$} for short (that is, we remove the sentence ``with respect to $\M$'').
Let us note the next result essentially due to \cite[Theorem 2.1]{GI}.

\begin{lem}[cf. \cite{GI}]\label{unique}
Assume that the ring $A$ satisfies the condition $(S_2)$ of Serre. Then an extended canonical $\F$-filtration of $\K_A$ is uniquely determined. 
\end{lem}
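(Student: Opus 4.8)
The plan is to establish uniqueness by showing that any two extended canonical $\F$-filtrations $\w = \{\w_i\}_{i\in\Z}$ and $\w' = \{\w'_i\}_{i\in\Z}$ of $\K_A$ agree term-by-term. By Lemma~\ref{cf} we have graded $\R'(\F)$-isomorphisms $\R'(\w) \cong \K_{\R'(\F)} \cong \R'(\w')$, so composing them yields a graded $\R'(\F)$-automorphism $\varphi: \R'(\w) \xrightarrow{\sim} \R'(\w')$. Choosing an integer $n \ll 0$ with $\w_n = \K_A = \w'_n$, Lemma~\ref{restriction1} tells us that $\varphi$ is the graded $\R'(\F)$-homomorphism extended from its restriction $\varphi|_n : \K_A \to \K_A$, and $\varphi|_n$ is an automorphism of $\K_A$ since $\varphi$ is an isomorphism and both filtrations exhaust $\K_A$ in degree $n$. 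Then for each $i \in \Z$, unwinding the definition of the extended filtration we get $\w'_i = \varphi|_n(\w_i)$.

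The crux, therefore, is to show $\varphi|_n(\w_i) = \w_i$ for every $i$, i.e.\ that the automorphism $\varphi|_n$ of $\K_A$ preserves each submodule $\w_i$. Here is where the $(S_2)$ hypothesis enters, via the same mechanism as in \cite[Theorem~2.1]{GI}: when $A$ is $(S_2)$, the canonical module $\K_A$ is itself $(S_2)$, and a key consequence is that the endomorphism ring of $\K_A$ is reduced to scalars in an appropriate sense --- more precisely, $\K_A$ has rank one on the locus where it is nonzero, and $(S_2)$ forces $\Hom_A(\K_A, \K_A)$ to be controlled by codimension-one data. Concretely, I would argue that for each height-one prime $\p \in \Supp_A \K_A$, the localization $(\K_A)_\p$ is a canonical module over the one-dimensional $A_\p$, and the filtration $\w_\p$ is the canonical $\F_\p$-filtration there, which by the one-dimensional theory (or by direct inspection using that $\varphi|_n$ localizes to multiplication by a unit on $(\K_A)_\p$) satisfies $\varphi|_\p((\w_i)_\p) = (\w_i)_\p$. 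Since $A$ is $(S_2)$, the module $\K_A$ satisfies $\K_A = \bigcap_{\height \p = 1}(\K_A)_\p$ inside its total quotient, and each $\w_i$, being a canonical-type filtration term, is likewise the intersection of its height-one localizations; hence $\varphi|_n(\w_i) = \bigcap_{\height\p=1}\varphi|_\p((\w_i)_\p) = \bigcap_{\height\p=1}(\w_i)_\p = \w_i$.

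The main obstacle I anticipate is making rigorous the claim that $\w_i$ equals the intersection of its height-one localizations --- that is, that the terms of an extended canonical $\F$-filtration are themselves $(S_2)$-type (``$\F$-full'' or divisorial) submodules of $\K_A$, so that the term-by-term comparison can be reduced to codimension one. This should follow from the construction in Lemma~\ref{filtration}: there $\w_i = \{x \in \K_A \mid t^i \otimes x \in \varphi_0(\K_{\R'(\F)})\}$ for a fixed embedding $\varphi_0$, and since $\K_{\R'(\F)}$ is the graded canonical module of $\R'(\F)$, which is $(S_2)$ as a canonical module, the submodule $\varphi_0(\K_{\R'(\F)})$ is saturated with respect to height-one primes; pulling this back degree by degree gives that each $\w_i$ is saturated in $\K_A$ with respect to height-one primes of $A$. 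Once that saturation property is in hand, the reduction to codimension one is immediate and the one-dimensional case is routine, so I would spend the bulk of the write-up on verifying the saturation property and then invoke Lemma~\ref{restriction1} and the localization compatibility of Lemma~\ref{supp1} to conclude.
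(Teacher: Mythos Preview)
Your first paragraph is exactly the paper's setup: compose the two isomorphisms to get $\varphi:\R'(\w)\xrightarrow{\sim}\R'(\w')$, pick $n$ with $\w_n=\w'_n=\K_A$, and use Lemma~\ref{restriction1} to see that $\varphi$ is extended from the $A$-automorphism $\varphi|_n:\K_A\to\K_A$, so that $\w'_i=\varphi|_n(\w_i)$ for all $i$.

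Where you diverge is in the ``crux'' paragraph. The paper dispatches this step in one line using the standard fact (see Aoyama \cite{A}) that when $A$ satisfies $(S_2)$, the natural map $A\to\End_A(\K_A)$ is an isomorphism. Hence every $A$-automorphism of $\K_A$ is multiplication by some unit $u\in A$, and therefore $\w'_i=\varphi|_n(\w_i)=u\cdot\w_i=\w_i$ for every $i$. No localization, no saturation, no divisoriality of the $\w_i$ is needed.

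Your proposed route through height-one primes and the $(S_2)$ property of $\K_{\R'(\F)}$ is a detour around this fact, and the obstacle you yourself flag---that each $\w_i$ is the intersection of its height-one localizations---is a genuine gap as written. You have not shown that the terms of an arbitrary extended canonical $\F$-filtration are divisorial submodules of $\K_A$; the sketch via the construction in Lemma~\ref{filtration} is not convincing, since that lemma produces \emph{some} filtration, not necessarily the given one, and the saturation of $\varphi_0(\K_{\R'(\F)})$ in $A[t,t^{-1}]\otimes_A\K_A$ with respect to height-one primes of $A$ is not established. Moreover, even at a height-one prime you would still be invoking that the automorphism is scalar, which is again the $\End(\K_A)=A$ fact in disguise. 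Replace your second and third paragraphs with the single observation that $(S_2)$ forces $\varphi|_n$ to be multiplication by a unit, and the proof is complete.
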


\begin{proof}
Let both of $\w=\{\w_n\}_{n\in\Z}$ and $\w'=\{\w'_n\}_{n\in\Z}$ be canonical $\F$-filtrations of $\K_A$. Then we have an isomorphism $\phi : \R'(\w)\xrightarrow{}\R'(\w')$ of a graded $\R'(\F)$-module. There exists an integer $n$ such that $\w_n=\w'_n=\K_A$, so that the isomorphism $\phi |_{n}:\K_A\xrightarrow{\sim}\K_A$ of $A$-modules is multiplication by a unit in $A$ because the ring $A$ satisfies the condition $(S_2)$ of Serre. By Lemma \ref{restriction1}, the isomorphism $\phi|_i: \w_i\xrightarrow{\sim}\w'_i$ is also multiplication by the unit in $A$ for every integer $i\in\Z$. This means that $\w=\w'$, as desired. 
\end{proof}

In the rest of this section, let us consider the case where $M=A$. 
Let $\w=\{\w_i\}_{i\in\Z}$ be an extended canonical $\F$-filtration of $\K_A$.
We are now ready to prove Theorem \ref{main2}.

\begin{proof}[Proof of Theorem \ref{main2}]
We put $\A=\A(\F)$ for simplicity.
Since $\K_A\cong A$ as an $A$-module, the ring $A$ satisfies the Serre's condition $(S_2)$, and we have $\A\subseteq \Spec A=\Supp_A\K_A$. Hence, $\A\subseteq \Supp_A\K_{\R'(\F)}$ by Lemma \ref{supp1}, so that
$(\K_{\R'(\F)})_\p\cong \K_{\R'(\F_\p)}$ as a graded $\R'(\F_\p)$-module for all $\p\in\A$. Thus, the implication $(1)\Rightarrow (2)$ holds true.

Let us show the implication $(2)\Rightarrow (1)$.
Take any $\p\in\A$. We will initially determine the number $\ai'(\F)$.
We note that
$\w_\p=\{(\w_i)_\p\}_{i\in\Z}$ is the extended canonical $\F_\p$-filtration of $A_\p$ by Lemma \ref{supp1}.
Since $\K_{\R'(\F_\p)}\cong\R'(\F_\p)(k)\cong\R'(\F_\p(k))$ as a graded $\R'(\F_\p)$-module, it follows from Lemma \ref{unique} that $\w_\p=\F_\p(k)$, that is, 
$
(\w_i)_\p=(F_{i+k})_\p
$
for all $i\in\Z$,
and hence $\ai'(\F_\p)=k-1$ because $(F_1)_\p\neq A_\p$ (recall that $\A\subseteq \mathrm{V}(F_1)$). Therefore, the equality $\ai'(\F)=k-1$ holds true by Proposition \ref{max}. 

Thus, we obtain the equality 
$
\w_{-k}=A,
$ 
and then $F_{i+k}\subseteq \w_{i}$ for all $i\in\Z$ because $F_{i+k}\, \w_{-k}\subseteq \w_{i}$.
Therefore, $\F(k)\subseteq \w$, which yields the inclusion map $$\iota: \R'(\F(k))\hookrightarrow \R'(\w),$$ and then $[\coker \iota]_i\cong \w_i/F_{i+k}$ as an $A$-module for all $i\in\Z$. Thus, we obtain that 
$
\Ass_A\coker\iota\subseteq \A
$
because $\w_i/F_{i+k}\subseteq A/F_{i+k}$ for all $i\in\Z$.

Suppose that $\coker\iota\neq 0$ and we seek a contradiction. Take any $\p'\in\Ass_A\coker\iota$, and then there exists an integer $i$ such that $(\w_i/F_{i+k})_{\p'}\neq 0$. Since $\p'\in \A$, this contradicts to the equality $(\w_i)_{\p'}=(F_{i+k})_{\p'}$ as obtained above. Thus, we observe that $\coker\iota= 0$, so that $\R'(\F(k))=\R'(\w)$.
Since $\R'(\F(k))\cong\R'(\F)(k)$ and $\R'(\w)\cong \K_{\R'(\F)}$, we obtain that $\K_{\R'(\F)}\cong\R'(\F)(k)$ as a graded $\R'(\F)$-module, as desired.
\end{proof}

Let us notice the following.

\begin{rem}\label{TI} 
Assume that $\R(\F)$ is Cohen-Macaulay with 
$\dim \R(\F)=d+1$. Then 
$\ai(\G(\F))<0$ and the graded local cohomology module $\H^{d-1}_\fkM(\G(\F))$ is concentrated in degree $-1$ by a generalized Trung-Ikeda's theorem in \cite{GN} and \cite{TVZ}. Therefore,  
the assumption that $[\H^{d-1}_\fkM(\G(\F))]_{j}=0$ for all integers $j<\ai(\G(\F))$ in Theorem \ref{mainthm} is always satisfied.
\end{rem}

As a direct consequence of Theorem \ref{mainthm}, we get the next result by the remark above.

\begin{cor}\label{maincor}
Assume that $\R(\F)$ is Cohen-Macaulay with 
$\dim \R(\F)=d+1$. Then $$\K_{\G(\F)}\cong \left(\K_{\R'(\F)}\big/t^{-1}\K_{\R'(\F)}\right)(-1)$$ as a graded $\G(\F)$-module.
Hence,
$\K_{\G(\F)}\cong\G (\w)(-1)$ as a graded $\G(\F)$-module and the equality $\ai '(\F)= \ai (\G(\F))$ holds.
\end{cor}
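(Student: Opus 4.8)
The plan is to derive Corollary~\ref{maincor} directly from Proposition~\ref{mainthm}, applied in the special case $M=A$ (so that $\M=\F$); the only thing that needs checking is that the degree hypothesis of Proposition~\ref{mainthm} is automatically in force under our assumptions. First I would record that, with $M=A$, we have $\dim M=d$, hence $\H^{\dim M-1}_\fkM(\G(\M))=\H^{d-1}_\fkM(\G(\F))$, so the hypothesis to be verified reads $[\H^{d-1}_\fkM(\G(\F))]_j=0$ for all integers $j<\ai(\G(\F))$.

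Next I would invoke Remark~\ref{TI}: since $\R(\F)$ is Cohen--Macaulay with $\dim\R(\F)=d+1$, the generalized Trung--Ikeda theorem gives $\ai(\G(\F))<0$ and shows that $\H^{d-1}_\fkM(\G(\F))$ is concentrated in degree $-1$. Consequently, for any integer $j<\ai(\G(\F))$ we have $j<\ai(\G(\F))\le -1$, so $j\neq -1$ and therefore $[\H^{d-1}_\fkM(\G(\F))]_j=0$. This confirms that the hypothesis of Proposition~\ref{mainthm} holds with $M=A$.

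Finally, applying Proposition~\ref{mainthm} yields $\K_{\G(\F)}\cong\bigl(\K_{\R'(\F)}/t^{-1}\K_{\R'(\F)}\bigr)(-1)$ as a graded $\G(\F)$-module, and the ``Hence'' part of that proposition supplies the remaining two assertions, $\K_{\G(\F)}\cong\G(\w)(-1)$ and $\ai'(\F)=\ai(\G(\F))$. I do not anticipate any genuine obstacle: all the work has been front-loaded into Proposition~\ref{mainthm} and Remark~\ref{TI}, and the proof reduces to the short degree computation above verifying that the vanishing hypothesis is satisfied.
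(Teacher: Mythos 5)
Your proposal is correct and matches the paper's own argument: the paper states Corollary~\ref{maincor} as a direct consequence of Proposition~\ref{mainthm} combined with Remark~\ref{TI}, which is exactly what you do. The short degree computation you include to verify the vanishing hypothesis is precisely the content of Remark~\ref{TI}.
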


It is observed that $\F$ and $\w$ are strongly separated (see, e.g., \cite[Lemma 2.1]{GI}), and we obtain the following result, which will be 
instrumental of a proof of Theorem \ref{main1}.

\begin{prop}\label{R'inj}
Let $a$ be an integer. Suppose that $[\H^{d-1}_\fkM(\G(\F))]_{j}=0$ for all integers $j<\ai(\G(\F))$. Then the following three conditions are equivalent.\vspace{0mm}
\begin{enumerate}
	\item $[\K_{\R'(\F)}]_{\ge -a-1}\cong\R(\F)(a+1)$ as a graded $\R(\F)$-module.\vspace{1mm}
	\item $[\K_{\G(\F)}]_{\ge -a}\cong\G(\F)(a)$ as a graded $\G(\F)$-module. \vspace{1mm}	
	\item
\begin{enumerate}
\item[{\rm (i)}] $[\K_{\R'(\F)}]_{-a-1}\cong A$ as an $A$-module and 
\item[{\rm (ii)}]  there exists a monomorphism $\G(\F)(a)\hookrightarrow \K_{\G(\F)}$ of graded $\G(\F)$-modules. 
\end{enumerate}
\end{enumerate}
\end{prop}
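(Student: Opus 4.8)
The plan is to deduce all three equivalences from Proposition~\ref{embed2}, applied to the $\F$-filtration $\w$ of the canonical module $\K_A$, once the canonical modules appearing in the statement have been identified with the Rees and associated graded modules of $\w$.

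First I would record the two structural isomorphisms. By the very definition of an extended canonical $\F$-filtration of $\K_A$ we have $\K_{\R'(\F)}\cong\R'(\w)$ as a graded $\R'(\F)$-module, hence also as a graded $\R(\F)$-module after restriction of scalars, and at each degree $[\K_{\R'(\F)}]_i\cong\w_i$ as an $A$-module. Under the standing hypothesis that $[\H^{d-1}_\fkM(\G(\F))]_j=0$ for all $j<\ai(\G(\F))$, Proposition~\ref{mainthm} applied with $M=A$ (so that $\M=\F$) yields $\K_{\G(\F)}\cong\left(\K_{\R'(\F)}/t^{-1}\K_{\R'(\F)}\right)(-1)\cong\G(\w)(-1)$ as a graded $\G(\F)$-module. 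I would also invoke the observation made just before the statement that both $\F$ and $\w$ are strongly separated, so that in particular $\F$ is separated and $\w$ is strongly separated; this is exactly what is needed to apply Proposition~\ref{embed2} to the $\F$-filtration $\w$.

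Next I would take $n=-a-1$ and write out the three equivalent conditions of Proposition~\ref{embed2} for $\M=\w$, namely: (1$'$) $\R'(\w)_{\ge -a-1}\cong\R(\F)(a+1)$ as a graded $\R(\F)$-module; (2$'$) $\G(\w)_{\ge -a-1}\cong\G(\F)(a+1)$ as a graded $\G(\F)$-module; and (3$'$) $\w_{-a-1}\cong A$ as an $A$-module together with a monomorphism $\G(\F)(a+1)\hookrightarrow\G(\w)$ of graded $\G(\F)$-modules. It then remains to match (1$'$), (2$'$), (3$'$) with the conditions (1), (2), (3) of the proposition by transporting along the two isomorphisms above, keeping the shift $(-1)$ and the truncations in order. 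Since $[\K_{\R'(\F)}]_{\ge -a-1}\cong\R'(\w)_{\ge -a-1}$ and $[\K_{\R'(\F)}]_{-a-1}\cong\w_{-a-1}$, condition (1) is exactly (1$'$) and condition (3)(i) is the first half of (3$'$); since $[\K_{\G(\F)}]_{\ge -a}\cong\bigl(\G(\w)_{\ge -a-1}\bigr)(-1)$, condition (2) amounts, after a shift by $(1)$, to $\G(\w)_{\ge -a-1}\cong\G(\F)(a+1)$, that is, to (2$'$); and applying the shift $(1)$ to a monomorphism $\G(\F)(a)\hookrightarrow\K_{\G(\F)}\cong\G(\w)(-1)$ turns condition (3)(ii) into the second half of (3$'$). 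The asserted equivalence $(1)\Leftrightarrow(2)\Leftrightarrow(3)$ is then immediate from Proposition~\ref{embed2}.

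The only point requiring care is the index bookkeeping in this last step: one must stay consistent with the convention $[E(m)]_i=E_{m+i}$ and with how $E_{\ge n}$ interacts with a shift, so that the single choice $n=-a-1$ in Proposition~\ref{embed2} simultaneously produces the truncation $\ge -a-1$ on the $\R'(\F)$ side and $\ge -a$ on the $\G(\F)$ side. I do not expect any substantive obstacle beyond this, since the content of the statement is already carried by Propositions~\ref{mainthm} and~\ref{embed2} together with the strong separatedness of $\F$ and $\w$.
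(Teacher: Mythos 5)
Your proof is correct and follows essentially the same route as the paper: identify $\K_{\R'(\F)}\cong\R'(\w)$ and, via Proposition~\ref{mainthm}, $\K_{\G(\F)}\cong\G(\w)(-1)$, then apply Proposition~\ref{embed2} with $n=-a-1$ to the strongly separated filtration $\w$. Your use of Proposition~\ref{mainthm} rather than Corollary~\ref{maincor} for the $\G(\F)$-side identifications is in fact the cleaner citation, since the hypothesis of the proposition only requires the vanishing of $[\H^{d-1}_{\fkM}(\G(\F))]_j$ for $j<\ai(\G(\F))$, not Cohen--Macaulayness of $\R(\F)$.
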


\begin{proof}
It follows from $\K_{\R'(\F)}\cong\R'(\w)$ as a graded $\R'(\F)$-module that 
the condition (1) is equivalent to saying that $\R(\w)_{\ge -a-1}\cong\R(\F)(a+1)$ as a graded $\R(\F)$-module.

From Theorem \ref{mainthm}, it follows that $\K_{\G(\F)}\cong\G (\w)(-1)$ as a graded $\G(\F)$-module. Therefore, the condition (2) if and only if $\G(\w)_{\ge -a-1}\cong\G(\F)(a+1)$ as a graded $\G(\F)$-module. 

Since $[\K_{\R'(\F)}]_{-a-1}\cong \w_{-a-1}$ as an $A$-module,
the condition (i) if and only if $\w_{-a-1}\cong A$ as an $A$-module. 
Furthermore, the condition (ii) is equivalent to saying that there exists a monomorphism $\G(\F)(a+1)\hookrightarrow \G(\w)$ of graded $\G(\F)$-modules by Corollary \ref{maincor}.

Thus, the proposition directly follows from Proposition \ref{embed2}.
\end{proof}

We are now ready to prove Theorem \ref{main1}.

\begin{proof}[Proof of Theorem \ref{main1}]
Thanks to \cite[Theorem 1.1]{TVZ}, the condition (1) in Theorem \ref{main1} is equivalent to saying that 
$[\K_{\G(\F)}]_{\ge 2}\cong\G(\F)(-2)$ as a graded $\G(\F)$-module. Therefore, it follows from Proposition \ref{R'inj} together with Remark \ref{TI} that the conditions (1), (2), and (3) in Theorem \ref{main1} are equivalent to each other.
\end{proof}

The following result is concerned with the relationship of quasi-Gorenstein properties between $\R'(\F)$ and $\G(\F)$.

\begin{prop}\label{qGR'} 
Let $a$ be an integer. 
Suppose that $[\H^{d-1}_\fkM(\G(\F))]_{j}=0$ for all integers $j<\ai(\G(\F))$. Then the following three conditions are equivalent.\vspace{0mm}
\begin{enumerate}
	\item $\K_{\R'(\F)}\cong\R'(\F)(a+1)$ as a graded $\R'(\F)$-module.\vspace{1mm}
	\item $\K_{\G(\F)}\cong\G(\F)(a)$ as a graded $\G(\F)$-module. \vspace{1mm}
	\item 
\begin{enumerate}
\item[{\rm (i)}] $\K_A\cong A$ as an $A$-module,  
\item[{\rm (ii)}] $a=\ai(\G(\F))$, and 
\item[{\rm (iii)}] there exists a monomorphism $\G(\F)(a)\hookrightarrow \K_{\G(\F)}$ of graded $\G(\F)$-modules. 
\end{enumerate}
\end{enumerate}
\end{prop}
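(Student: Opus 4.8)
The plan is to reduce this to Proposition \ref{R'inj}, applied with its parameter equal to $a$, together with Theorem \ref{mainthm}, which applies here because of the standing hypothesis on $\H^{d-1}_\fkM(\G(\F))$. Throughout I will use the isomorphisms $\K_{\R'(\F)}\cong\R'(\w)$ and, from Theorem \ref{mainthm}, $\K_{\G(\F)}\cong\G(\w)(-1)$, together with the equality $\ai'(\F)=\ai(\G(\F))$ coming from the same theorem. I will also record, from the discussion preceding Lemma \ref{-a-1}, that the least degree in which $\G(\w)$ is nonzero equals $-\ai'(\F)-1$, hence the least degree in which $\K_{\G(\F)}$ is nonzero equals $-\ai'(\F)=-\ai(\G(\F))$; here I use that $[\G(\F)]_0=A/F_1\neq 0$, so that $\G(\F)(a)$ has least nonzero degree $-a$.

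First I would prove $(1)\Rightarrow(2)$ and $(1)\Rightarrow(3)$ simultaneously. Given a graded isomorphism $\R'(\w)\cong\R'(\F)(a+1)\cong\R'(\F(a+1))$, Lemma \ref{restriction1} shows that in each degree it is the restriction of a single $A$-isomorphism $f\colon\K_A\xrightarrow{\sim}A$ with $f(\w_i)=F_{i+a+1}$ for every $i$. In particular $\K_A\cong A$, which is $(3)(\mathrm{i})$; and since $F_1\neq A$ forces $F_j\neq A$ for all $j\ge 1$, the relation $f(\w_i)=F_{i+a+1}$ gives $\w_i=\K_A\iff i\le -a-1$, so $\ai'(\F)=a$ and hence $a=\ai(\G(\F))$, which is $(3)(\mathrm{ii})$. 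Passing to associated graded modules, $\G(\w)\cong\G(\F(a+1))\cong\G(\F)(a+1)$, so $\K_{\G(\F)}\cong\G(\w)(-1)\cong\G(\F)(a)$; this is $(2)$, and a fortiori it provides the monomorphism required in $(3)(\mathrm{iii})$.

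Next I would prove $(3)\Rightarrow(2)$ and $(2)\Rightarrow(1)$. For $(3)\Rightarrow(2)$: by $(3)(\mathrm{ii})$ and Theorem \ref{mainthm} we have $\ai'(\F)=a$, so $\w_{-a-1}=\K_A$, which by $(3)(\mathrm{i})$ is $\cong A$; together with the monomorphism of $(3)(\mathrm{iii})$ this is exactly condition $(3)$ of Proposition \ref{R'inj} with parameter $a$, so that proposition yields $[\K_{\G(\F)}]_{\ge -a}\cong\G(\F)(a)$, and since $\K_{\G(\F)}$ vanishes in degrees below $-\ai'(\F)=-a$ this truncated isomorphism is an isomorphism $\K_{\G(\F)}\cong\G(\F)(a)$. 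For $(2)\Rightarrow(1)$: comparing least nonzero degrees in $\G(\F)(a)\cong\K_{\G(\F)}\cong\G(\w)(-1)$ gives $a=\ai'(\F)=\ai(\G(\F))$; in particular $[\K_{\G(\F)}]_{\ge -a}\cong\G(\F)(a)$, so Proposition \ref{R'inj} gives $[\K_{\R'(\F)}]_{\ge -a-1}\cong\R(\F)(a+1)$, and by Lemma \ref{y} there is $y\in\K_A$ with $0:_Ay=0$ and $\w_i=F_{i+a+1}\,y$ for all $i\ge -a-1$. Since $\ai'(\F)=a$ forces $\w_i=\K_A$ for $i\le -a-1$ and $F_{i+a+1}=A$ there, we conclude $\K_A=Ay\cong A$ and $\w_i=F_{i+a+1}\,y$ for every $i\in\Z$; hence the graded $\R'(\F)$-homomorphism extended from multiplication by $y$ on $A$ is an isomorphism $\R'(\F(a+1))\xrightarrow{\sim}\R'(\w)$, so $\K_{\R'(\F)}\cong\R'(\w)\cong\R'(\F)(a+1)$, which is $(1)$. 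Combining the implications proved so far closes the cycle and gives the equivalence of $(1)$, $(2)$, $(3)$.

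Most of the work is purely bookkeeping of degree shifts and comparisons of least nonzero degrees; the one genuinely non-formal ingredient beyond Proposition \ref{R'inj} is Theorem \ref{mainthm}, which is precisely what forces $\ai'(\F)=\ai(\G(\F))$ and thereby lets the clause ``$a=\ai(\G(\F))$'' be read off from $\ai'(\F)$. I do not anticipate a serious obstacle; the only point demanding mild care is ensuring, via Lemma \ref{restriction1}, that the degreewise behaviour of the global isomorphism in $(1)$ is governed by one $A$-isomorphism $\K_A\cong A$, which is what pins down $\ai'(\F)=a$.
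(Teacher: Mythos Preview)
Your proposal is correct and uses essentially the same toolkit as the paper: Theorem \ref{mainthm}, Proposition \ref{R'inj}, Lemma \ref{y}, and the identification $\K_{\R'(\F)}\cong\R'(\w)$. The only difference is organizational. The paper runs the clean cycle $(1)\Rightarrow(2)\Rightarrow(3)\Rightarrow(1)$: for $(1)\Rightarrow(2)$ it simply tensors the given isomorphism with $\G(\F)$ and invokes Theorem \ref{mainthm}; for $(2)\Rightarrow(3)$ it reads off (ii) and (iii) immediately and obtains (i) via Proposition \ref{R'inj} and Lemma \ref{-a-1}; and its $(3)\Rightarrow(1)$ is the same Lemma \ref{y} argument you give for $(2)\Rightarrow(1)$. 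You instead prove $(1)\Rightarrow(2)$ and $(1)\Rightarrow(3)$ together by unpacking the isomorphism degreewise through Lemma \ref{restriction1}, then close with $(3)\Rightarrow(2)$ and $(2)\Rightarrow(1)$. Your route through Lemma \ref{restriction1} is slightly more hands-on than the paper's one-line tensor argument for $(1)\Rightarrow(2)$, but it has the minor benefit of yielding $\K_A\cong A$ and $\ai'(\F)=a$ directly without appealing back to Proposition \ref{R'inj}. Either way the substance is identical.
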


\begin{proof}
$(1)\Rightarrow (2)$ Applying the functor $\G(\F)\otimes_{\R'(\F)}-$ 
to the both sides of the given isomorphism $\K_{\R'(\F)}\cong\R'(\F)(a+1)$ of graded $\R'(\F)$-modules, we obtain that $$\K_{\R'(\F)}/t^{-1}\K_{\R'(\F)}\cong\G(\F)(a+1)$$ as a graded $\G(\F)$-module. Then it follows from Theorem \ref{mainthm} that $\K_{\G(\F)}\cong\G(\F)(a)$ as a graded $\G(\F)$-module.

$(2)\Rightarrow (3)$ 
Since $\K_{\G(\F)}\cong\G(\F)(a)$ as a graded $\G(\F)$-module, the conditions (ii) and (iii) clearly hold true.
Since $[\K_{\G(\F)}]_{\ge -a}\cong\G(\F)(a)$ as a graded $\G(\F)$-module, we have 
$[\K_{\R'(\F)}]_{-a-1}\cong A$ by Proposition \ref{R'inj}. This implies that $\K_A\cong A$ as an $A$-module because $[\K_{\R'(\F)}]_{-a-1}\cong \K_A$ as $A$-module by Lemma \ref{-a-1}.

$(3)\Rightarrow (1)$ 
Since $\K_A\cong A$ as an $A$-module,
we can take $\w=\{\w_i\}_{i\in\Z}$ as an extended canonical $\F$-filtration of $A\, $. By using Lemma \ref{-a-1}, we see the equality $\w_{-a-1}=A$ because $a=\ai(\G(\F))$, and hence $[\K_{\R'(\F)}]_{-a-1}\cong A$ as $A$-modules.
Then, it follows from Proposition \ref{R'inj} that $$[\K_{\R'(\F)}]_{\ge -a-1}\cong\R(\F)(a+1)$$
as a graded $\R(\F)$-module.
By Lemma \ref{y}, there is $y\in A$ such that $0:_Ay=0$ and $\w_{i}=F_{i+a+1} y$ for every integer $i\ge -a-1$. Since $\w_{-a-1}=A$, we have $A=A y$, which means that the element $y$ is a unit of A, and hence, $\w_{i}=F_{i+a+1}$ for every integer $i\ge -a-1$. Therefore, the equality $\w=\F(a+1)$ holds true because $\w_i=F_{i+a+1}=A$ for all integers $i< -a-1$. Thus, $\K_{\R'(\F)}\cong\R'(\F)(a+1)$ as a graded $\R'(\F)$-module, since $\K_{\R'(\F)}\cong\R'(\w)=\R(\F(a+1))\cong\R(\F)(a+1)$ as a graded $\R'(\F)$-module.
\end{proof}

The next theorem directly follows from Proposition \ref{qGR'} together with Remark \ref{TI}.

\begin{thm}\label{corqGR'} 
Let $a$ be an integer and assume that $\R(\F)$ is a Cohen-Macaulay ring with 
$\dim \R(\F)=\dim A+1$. Then the following three conditions are equivalent.\vspace{1mm}
\begin{enumerate}
	\item $\K_{\R'(\F)}\cong\R'(\F)(a+1)$ as a graded $\R'(\F)$-module.\vspace{2mm}
	\item $\K_{\G(\F)}\cong\G(\F)(a)$ as a graded $\G(\F)$-module. \vspace{2mm}
	\item 
\begin{enumerate}
\item[{\rm (i)}] $\K_A\cong A$ as an $A$-module,  
\item[{\rm (ii)}] $a=\ai(\G(\F))$, and 
\item[{\rm (iii)}] there exists a monomorphism $\G(\F)(a)\hookrightarrow \K_{\G(\F)}$ of graded $\G(\F)$-modules. 
\end{enumerate}
\end{enumerate}
\end{thm}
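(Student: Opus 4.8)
The plan is to obtain Theorem \ref{corqGR'} as an immediate consequence of Proposition \ref{qGR'}, the only work being to check that the standing hypothesis of that proposition is automatically satisfied here. So I would not re-prove the equivalence $(1)\Leftrightarrow(2)\Leftrightarrow(3)$ from scratch; instead I would reduce to Proposition \ref{qGR'} and feed it the hypothesis supplied by Remark \ref{TI}.

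First I would verify the hypothesis of Proposition \ref{qGR'}, namely that $[\H^{d-1}_\fkM(\G(\F))]_j=0$ for all integers $j<\ai(\G(\F))$, where $d=\dim A$. Since we are assuming that $\R(\F)$ is Cohen-Macaulay with $\dim\R(\F)=\dim A+1=d+1$, this is precisely the setting of Remark \ref{TI}. By the generalized Trung--Ikeda theorem recorded there (which rests on \cite{GN} and \cite{TVZ}), one has $\ai(\G(\F))<0$ and the graded local cohomology module $\H^{d-1}_\fkM(\G(\F))$ is concentrated in degree $-1$. Because $\ai(\G(\F))\le -1$, every integer $j<\ai(\G(\F))$ satisfies $j\le -2$, hence $j\neq -1$, so $[\H^{d-1}_\fkM(\G(\F))]_j=0$ for all such $j$; this is exactly what Proposition \ref{qGR'} requires.

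Having checked this, I would simply invoke Proposition \ref{qGR'} with the same integer $a$: it then yields that conditions (1), (2), and (3) are equivalent, and these are verbatim the three conditions of Theorem \ref{corqGR'}, which finishes the argument. The only point demanding a little care — and the closest thing to an ``obstacle'' — is aligning the hypotheses: one must notice that ``$\dim\R(\F)=\dim A+1$'' matches the ``$\dim\R(\F)=d+1$'' of Remark \ref{TI}, and that the degree $-1$ concentration of $\H^{d-1}_\fkM(\G(\F))$ together with $\ai(\G(\F))<0$ does force the vanishing below $\ai(\G(\F))$. There is no genuine difficulty beyond that; all the real content has already been absorbed into Proposition \ref{qGR'} (and, behind it, Theorem \ref{mainthm}, Proposition \ref{R'inj}, and Proposition \ref{embed2}) and into the input of Remark \ref{TI}.
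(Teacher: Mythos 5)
Your argument is correct and is precisely the paper's own: the theorem is stated to follow directly from Proposition \ref{qGR'} together with Remark \ref{TI}, which is exactly the reduction you carry out (with the minor extra care of spelling out why $\ai(\G(\F))<0$ and degree-$(-1)$ concentration give the required vanishing).
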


We close this section with the proof of Corollaries \ref{maincor1} and \ref{maincor2}.

\begin{proof}[Proof of Corollary \ref{maincor1}]
$(1)\Leftrightarrow (3)$ Since $\R(\F)$ is Cohen-Macaulay and $\grade F_1\ge 2$, $\K_{\R(\F)}\cong\R(\F)(-1)$ as a graded $\R(\F)$-module if and only if 
$\K_{\G(\F)}\cong\G(\F)(-2)$ as a graded $\G(\F)$-module by \cite[Corollary 3.4]{TVZ}. The latter condition is equivalent to saying that the condition (3) by Theorem \ref{corqGR'}.

$(2)\Leftrightarrow (3)$ This directly follows from Theorem \ref{corqGR'}.
\end{proof}

\begin{proof}[Proof of Corollary \ref{maincor2}]
The required equivalence directly follows from Theorem \ref{main2} together with Corollary \ref{maincor1}.
\end{proof}

\section{Gorenstein Rees algebras of powers of ideals}

This section will investigate the Gorenstein Rees algebras of powers of ideals and give the proof of Theorem \ref{in}, Corollary \ref{incor}, and Example \ref{qGex}.
First of all, let us give a proof of the assertions $(1)$ and $(2)$ of Example \ref{qGex}. The assertions $(3)$ will be shown at the end of this paper. For each $\Z$-graded ring $S$ with a unique homogeneous maximal ideal $\fkN$, let $\depth S$ stand for the depth of $S$ relative to $\fkN$, namely $\depth S:=\depth S_\fkN$.

\begin{proof}[Proof of the assertions $(1)$ and $(2)$ in Example \ref{qGex}]
(1) We will show $\grade \G(I)_+>0$. Take any $P\in\Spec \G(I)$ such that $\G(I)_+\subseteq P$. Suppose that $\depth \G(I)_P=0$, and we seek a contradiction. Put $\p=P\cap A$, and then $I\subseteq \p$. Since $\G(I)_+\subseteq P$, we have $\depth \G(I)_\p=\depth \G(I)_P$. Hence, $\depth \G(I_\p)=0$, since $\G(I)_\p\cong\G(I_\p)$ as rings. Suppose that $\p\neq\m$. We have $I_\p=(x_2, x_3, \dots, x_d)_\p$, since $x_1\not\in \p$. Hence, $I_\p$ is a parameter ideal of the Cohen-Macaulay local ring $A_\p$ because $\NCM(A)\le 0$, and then $\depth \G(I_\p)=d-1$ because the ring $\G(I_\p)$ is isomorphic to a polynomial ring in $d-1$ variables over $A_\p/I_\p$.
This contradicts to $\depth \G(I_\p)=0$. Thus, we obtain that $\p=\m$. 

By using \cite[Theorem 2.4]{K2}, we observe $\depth\R(I)\ge 4$, since $\depth A\ge 2$. We have $\depth\G(I)=\depth\G(I_\p)=0$, as $\p=\m$. This is a contradiction to the equality $\depth\R(I)=\depth\G(I)+1$ given by \cite[Theorem 3.10]{HM}. Thus, $\grade \G(I)_+>0$, as required.
Furthermore, thanks to \cite[Theorem (12.3)]{Mc2}, we obtain that ${(I^i)}^\ast=I^i$ for all integers $i\in\Z$.

(2) Let us initially show that $\Ass \G(I)=\Min\G(I)$. Take any $P\in\Ass \G(I)$, and then $\G(I)_+\not\subseteq P$ because $\grade \G(I)_+>0$. Thanks to \cite[Theorem 3.1]{K2}, we have $\R(I^{n})$ is a Cohen-Macaulay ring for some positive integer $n$.  Identifying $\G(I)=\R(I)/I\R(I)$, we write $P=Q/I\R(I)$ for some $Q\in\Spec \R(I)$ satisfying $Q\supseteq I\R(I)$. Since $\R(I)_+\not\subseteq Q$, the ring $\R(I)_Q$ is Cohen-Macaulay because so is the ring $\R(I^n)$ (recall that $\Proj\R(I)\cong \Proj\R(I^n)$ as a scheme).
We can find a nonzero divisor $x\in I$ on $A$ such that $xt\not\in Q$, and then we have $\R(I)_{xt}/x\R(I)_{xt}\cong \G(I)_{xt}$ as a ring, so that $\R(I)_Q/x\R(I)_Q\cong \G(I)_Q$ as a ring. Therefore, the ring $\G(I)_P\ (=\G(I)_Q)$ is Cohen-Macaulay. Since $\depth \G(I)_P=0$, we obtain that $P\in\Min \G(I)$, and thus $\Ass \G(I)=\Min\G(I)$, as desired. 

We set $\A(I)=\bigcup_{i\ge 0}\Ass_AA/I^i$, and then $\A(I)=\Ass_A \G(I)$.
The ring $\G(I)$ is quasi-unmixed because so is the ring $A$ (see \cite[Corollary 18.24]{HIO}). 
Therefore, 
all associated prime ideals of $\G(I)$ have same codimension. Hence, we obtain that $$\A(I)=\{\p\in\V(I)\mid \dim A_\p=\ell (I_\p)\}$$ (cf. \cite[Proposition 4.1]{Mc1} and see also \cite[Lemma 3.1]{GI}). 
Put $J=(x_2, x_3, \dots, x_d)$. Since the ideal $J$ is generated by a subsystem of parameters for $A$ of $d-1$ elements, we have $\G(J)/\m\G(J)$ is isomorphic to a polynomial ring in $d-1$ variables over $A/\m$, and hence $\dim \G(J)/\m\G(J)=d-1$. Thanks to \cite[(3.2.1)]{K1}, we obtain that the ideal $J$ is a reduction of $I$, and hence the ring $\G(I)/\m\G(I)$ is a finitely generated $\G(J)/\m\G(J)$-module  by \cite[Theorem 8.2.1]{SH}. Therefore, $\dim \G(I)/\m\G(I)\le d-1$, whence $\ell (I)=d-1$ by \cite[Proposition 8.3.9]{SH}. Thus, $\m\not\in\A(I)$.

Take any $\p\in\A(I)$. Since $\p\neq\m$, we have $\dim A_\p =d-1$, and hence $I_\p =J_\p$, as $x_1\not\in \p$. Therefore, $I_\p$ is a parameter ideal of $A_\p$. In addition, the ring $A_\p$ is Gorenstein, because the ring $A$ is quasi-Gorenstein and $\dim\NCM (A)\le 0$. 
Thus, $\G(I_\p)$ is isomorphic to a polynomial ring in $d-1$ variables over the Gorenstein ring $A_\p/I_\p$, and hence the ring $\G(I_\p)$ is Gorenstein with $\ai(\G(I_\p))=-d+1$. Thanks to \cite[Proposition 4.4]{HHK2}, we obtain that $\ai(\G(I))=-d+1$, as required.

Moreover, the ring $\R'(I_\p)$ is also Gorenstein, since so is the ring $\G(I_\p)$. Therefore, 
$$\K_{\R'(I_\p)}\cong\R'(I_\p)(k)$$ as a graded $\R'(I_\p)$-module for some $k\in\Z$ by \cite[Proposition 3.6.11]{BH}. Then $k=-d+2$ because $\K_{\G(I_\p)}\cong \left(\K_{\R'(I_\p)}\big/t^{-1}\K_{\R'(I_\p)}\right)(-1)\cong\G(I_\p)(k-1)$ and $\K_{\G(I_\p)}\cong\G(I_\p)(-d+1)$ as graded $\G(I_\p)$-module (recall that $\G(I_\p)$ is isomorphic to a polynomial ring in $d-1$ variables).
This means that the number $k$ does not depends on the choice of $\p\in\A(I)$. 
Thus, we obtain from Theorem \ref{main2} that $$\K_{\R'(I)}\cong\R'(I)(-d+2)$$ as a graded $\R'(I)$-module, as desired.
Furthermore, let $m$ and $n$ be positive integers such that $mn=d-2$. Applying the $n^{\th}$ Veronese functor $[-]^{(n)}$ of the isomorphism above, we write $[\K_{\R'(I_\p)}]^{(n)}\cong[\R'(I_\p)(-d+2)]^{(n)}$ as a graded $\R'(I_\p)^{(n)}$-module, whence 
$\K_{\R'(I^n_\p)}\cong\R'(I^n_\p)(-m)$ as a graded $\R'(I^n_\p)$-module, because $mn=d-2$. 
\end{proof}

In the rest of this section, let $(A, \m)$ be a Noetherian local ring with $d=\dim A>0$ and assume that the ring $A$ is a homomorphic image of a Gorenstein local ring. 
Let $I$ be an ideal of $A$ such that $I\neq A$ and $I$ contains a regular element on $A$. Hence, $\dim\R(I)=d+1$. We consider the case where $\F=\{I^i\}_{i\in\mathbb{Z}},$ and continue to use the notation introduced in the previous section unless otherwise specified. 
\if0
Let us recall it as follows: $\pi : C\to A$ stands for the given epimorphism of rings, where $C$ is a Gorenstein local ring, and $$\P'=C[X_1,X_2,\dots X_\ell, Y]$$ denotes the $\Z$-graded polynomial ring over $C$. Here, $\deg X_i=1$ for all $1\le i\le \ell$, $\deg Y=-1$, and $\deg c=0$ for all $c\in C\setminus \{0\}$. Consider the graded epimorphism of $C$-algebras $$\varphi :\P'\to \R'(I)$$ such that $\varphi (X_i)=a_it$ for all $1\le i\le \ell$, $\varphi (Y)=t^{-1}$, and $\varphi (c)=\psi (c)$ for all $c\in C$.
 We set $\K_{\P'}=\P'(-\ell+1)$ that is the graded canonical module of $\P'$. Then
\begin{center}
$\K_{\R'(I)}\cong \Ext_{\P'}^m(\R'(I), \K_{\P'})$\ \ and\ \ $\K_{\G(I)}\cong \Ext_{\P'}^{m+1}(\G(I), \K_{\P'})$
\end{center} 
as a graded $\R'(I)$-module, where $m=\dim \P' -\dim \R'(I)$. 
\fi
Let $\w=\{\w_i\}_{i\in\mathbb{Z}}$ denote a canonical $\F$-filtration of $\K_A$. Hence, $\K_{\R'(I)}\cong\R'(\w)$ as a graded $\R'(I)$-module.

For each ideal $J$ of $A$, we set $J^n=A$ for each integer $n\le 0$, and define 
$$J^\ast=\bigcup_{i\ge 0}J^{i+1}:_AJ^i,$$
which is called the Ratliff-Rush closure of $J$
(\cite{RR} and \cite[Chapter 11]{Mc2} for details). We have $I^i\subseteq (I^i)^\ast$ and ${(I^i)}^\ast {(I^j)}^\ast\subseteq {(I^{i+j})}^\ast$ for all $i, j\in\mathbb{Z}$, and moreover ${(I^i)}^\ast=I^i$ for all integers $i\gg 0$.
Put $$\F^\ast=\{(I^i)^\ast\}_{i\in\mathbb{Z}},$$ which forms a filtration of ideals in $A$ such that $\F\subseteq\F^\ast$. 
Since $\R'(I)$ is a graded subring of $\R'(\F^\ast)$, we have 
the short exact sequence 
$$
0\to \R'(I)\to \R'(\F^\ast)\to \R'(\F^\ast)/\R'(I) \to 0
$$
of graded $\R'(I)$-modules. For each $i\in\Z$, it is noticed that $[\R'(\F^\ast)/\R'(I)]_i\cong (I^i)^\ast/I^i$ as an $A$-module and that $\dim \big((I^i)^\ast/I^i\big)<d$, since the ideal $I$ contains a nonzero divisor on $A$.
Since $I^i= (I^i)^\ast$ for all integers $i\gg 0$,  there exists an integer $r$ such that $[\R'(\F^\ast)/\R'(I)]_i=0$ whenever $i\in\Z\setminus\{1,2,\dots ,r\}$. 
Hence, $\R'(\F^\ast)$ is a finitely generated $\R'(I)$-module with $\dim \R'(\F^\ast)/\R'(I)< d$. Since $\dim \R'(I)=\dim \R'(\F^\ast)=d+1$, 
we obtain that $$\K_{\R'(\F^\ast)}\cong \K_{\R'(I)}$$ as a graded $\R'(I)$-module.

Let us state the following result, which illustrates that the $\F$-filtration $\w$ is also an extended canonical $\F^\ast$-filtration of $\K_A$.

\begin{lem}\label{ast}The following two assertions hold true.
\vspace{0mm}
\begin{enumerate}
\item $(I^i)^\ast\w_j\subseteq \w_{i+j}$ for all $i,j\in\Z$, and hence $\w$ is an $\F^\ast$-filtration of $\K_A$.\vspace{2mm}
\item $\K_{\R'(\F^\ast)}
\cong\R'(\w)$ as a graded $\R'(\F^\ast)$-module.
\end{enumerate}
\end{lem}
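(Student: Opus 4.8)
The plan is to obtain both assertions simultaneously as an application of Lemma \ref{extendF}, after assembling the right graded isomorphism. First I would recall that, by the definition of an extended canonical $\F$-filtration (here $M=A$, so $\M=\F$), there is a graded $\R'(I)$-isomorphism $\R'(\w)\xrightarrow{\sim}\K_{\R'(I)}$. Next I would use the finiteness computation established just before the statement: $\R'(\F^\ast)$ is a finitely generated $\R'(I)$-module with $\dim\R'(\F^\ast)/\R'(I)<d=\dim\R'(\F^\ast)-1$, so that $\K_{\R'(\F^\ast)}\cong\K_{\R'(I)}$ as graded $\R'(I)$-modules. Composing these two isomorphisms gives a graded $\R'(I)$-isomorphism $\phi\colon\R'(\w)\xrightarrow{\sim}\K_{\R'(\F^\ast)}$.

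The decisive observation is that the target $\K_{\R'(\F^\ast)}$ is a graded $\R'(\F^\ast)$-module, whereas $\phi$ is a priori only $\R'(I)$-linear. Since $\F=\{I^i\}_{i\in\Z}\subseteq\{(I^i)^\ast\}_{i\in\Z}=\F^\ast$ and $\w$ is an $\F$-filtration of $\K_A$, Lemma \ref{extendF} applies directly with $\F'=\F^\ast$, $\calK=\K_{\R'(\F^\ast)}$, $M=\K_A$, and $\M=\w$. It tells us on the one hand that $\w$ naturally extends to an $\F^\ast$-filtration of $\K_A$, i.e. $(I^i)^\ast\w_j\subseteq\w_{i+j}$ for all $i,j\in\Z$, which is assertion (1); and on the other hand that $\phi$ is in fact a graded isomorphism of $\R'(\F^\ast)$-modules, which is assertion (2). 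In particular $\w$ is then an extended canonical $\F^\ast$-filtration of $\K_A$, as announced before the lemma.

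I do not anticipate a real obstacle, since the substantive content has been placed in advance into Lemma \ref{extendF} and into the dimension argument above. The only points requiring care are formal: one should note that $\w$, being an $\F$-filtration, already satisfies the conditions $\w_i\supseteq\w_{i+1}$ and $\w_i=\K_A$ for $i\ll 0$, so that passing to $\F^\ast$ adds nothing but the product condition, which is precisely assertion (1); and one should confirm that $\K_{\R'(\F^\ast)}$ is a graded $\R'(\F^\ast)$-module in the literal sense demanded by Lemma \ref{extendF}, which holds by its construction as a graded canonical module. With these remarks in place the argument is essentially two lines.
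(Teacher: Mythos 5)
Your argument is exactly the paper's: compose the isomorphism $\R'(\w)\cong\K_{\R'(I)}$ with the identification $\K_{\R'(I)}\cong\K_{\R'(\F^\ast)}$ (obtained from the finite extension $\R'(I)\subseteq\R'(\F^\ast)$ with lower-dimensional cokernel, established just before the lemma) and then feed the resulting graded $\R'(I)$-isomorphism $\R'(\w)\xrightarrow{\sim}\K_{\R'(\F^\ast)}$ into Lemma \ref{extendF}. This yields both assertions at once, precisely as in the paper.
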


\begin{proof} 
\if0
Since $\R'(I)$ is a graded subring of $\R'(\F^\ast)$, we have 
the short exact sequence 
$$
0\to \R'(I)\to \R'(\F^\ast)\to \R'(\F^\ast)/\R'(I) \to 0
$$
of graded $\R'(I)$-modules. For each $i\in\Z$, it is noticed that $[\R'(\F^\ast)/\R'(I)]_i\cong (I^i)^\ast/I^i$ as an $A$-module and that $\dim \big((I^i)^\ast/I^i\big)<d$, since the ideal $I$ contains a nonzero divisor on $A$.
Since $I^i= (I^i)^\ast$ for all integers $i\gg 0$,  there exists an integer $r$ such that $[\R'(\F^\ast)/\R'(I)]_i=0$ whenever $i\in\Z\setminus\{1,2,\dots ,r\}$. 
Hence, $\R'(\F^\ast)$ is a finitely generated $\R'(I)$-module with $\dim \R'(\F^\ast)/\R'(I)< d$. Since $\dim \R'(I)=\dim \R'(\F^\ast)=d+1$, 
we get $\K_{\R'(\F^\ast)}\cong \K_{\R'(I)}$ as a graded $\R'(I)$-module 
\fi
Since $\K_{\R'(I)}\cong\R'(\w)$ as a graded $\R'(I)$-module, the above $\R'(I)$-isomorphism $\K_{\R'(\F^\ast)}\cong \K_{\R'(I)}$ implies an isomorphism $\R'(\w)\xrightarrow{\sim} \K_{\R'(\F^\ast)}$ of graded $\R'(I)$-modules.
Thus, the assertions (1) and (2) directly follows from Lemma \ref{extendF}.
\end{proof}

In the following discussion,  we will pay attention to the condition that $$[\K_{\R'(I^n)}]_{\ge -b-1}\cong\R(I^n)(b+1)$$ as a graded $\R(I^n)$-module for some positive integer $n$ and for some integer $b$. This is a necessary condition of the Gorenstein property of the Rees algebra $\R(I^n)$ in the case where $b=-2$ (see Theorem \ref{main1}).
Moreover, this condition is automatically satisfied when $\R'(I^n)$ is a quasi-Gorenstein ring. A typical example of the quasi-Gorenstein extended Rees algebras is stated as follows: when $A$ is a Gorenstein local ring of dimension $d\ge 2$ and $\q$ is a parameter ideal of $A$, we have $\K_{\R'(\q)}\cong\R'(\q)(-d+1)$ as a graded $\R'(\q)$-module. Hence, taking a positive integers $n$ and an integer $b$ such that $d-1=n(-b-1)$, we obtain that $\K_{\R'(\q^n)}\cong\R'(\q^n)(b+1)$ as a graded $\R'(\q^n)$-module, so that
$[\K_{\R'(\q^n)}]_{\ge -b-1}\cong\R(\q^n)(b+1)$ as a graded $\R(\q^n)$-module. For the case of a non-Gorenstein base ring $A$, see, e.g., Example \ref{qGex} (2).

We recall the notation $\calN^{(n)}=\{N_{ni}\}_{i\in\Z}$ for each $\F$-filtration $\calN=\{N_i\}_{i\in\Z}$ of an $A$-module $N$ and for each positive integer $n$.
Since $\F=\{I^i\}_{i\in\mathbb{Z}}$ in this section, we have $\F^{(n)}=\{I^{ni}\}_{i\in\mathbb{Z}}$,
and then $\R'(I^n)=\R'(\F^{(n)})\cong\R'(\F)^{(n)}=\R'(I)^{(n)}$ as a ring. In the next lemma, we will consider the number $$\ai '(\F) =-\inf\{i\in\Z\mid \w_i\neq\K_A\},$$ as introduced in Section 3.
Let us express
\begin{center}
$\ai '(I):=\ai '(\F)$\ \ and\ \ $\ai '(I^n):=\ai '(\F^{(n)})$
\end{center}
for simplicity. We recall that $\ai '(I^n)
=-\inf\{i\in\Z\mid \w_{ni}\neq\K_A\},$ as $\w^{(n)}
$ 
is an extended canonical $\F^{(n)}$-filtration of $\K_A$.

\begin{lem}\label{rr}
Let $n$ be a positive integer and assume that 
$[\K_{\R'(I^n)}]_{\ge -b-1}\cong\R(I^n)(b+1)$ as a graded $\R(I^n)$-module
for some integer $b$. Then there exists $y\in\K_A$ such that $$\w_{ni}=I^{n(i+b+1)}y$$ for all integers $i\ge -b-1$ and $0:_Ay=0$. Moreover, the following four assertions hold.\vspace{0mm}
\begin{enumerate}
\item $\w_{i}=(I^{i+n(b+1)})^\ast y$ for all integers $i\ge n(-b-1)$.\vspace{2mm}
\item $[\K_{\R'(\F^\ast)}]_{\ge n(-b-1)}\cong\R(\F^\ast)(n(b+1))$ as a graded $\R(\F^\ast)$-module.
\vspace{2mm}
\item $\mathrm{a'}(I^n)\ge b$, and hence $\mathrm{a}(\G(I^n))\ge b$.
\vspace{2mm}
\item $\mathrm{a'}(I)\ge n(b+1)-1$, and hence $\mathrm{a}(\G(I))\ge n(b+1)-1$.
\end{enumerate}
\end{lem}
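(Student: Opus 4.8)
Everything is extracted from Lemma~\ref{y}, Lemma~\ref{ast}, Lemma~\ref{-a-1}, and the definition of the Ratliff--Rush closure; the only genuinely new input is a ``gap-filling'' argument passing from the degrees divisible by $n$ to all degrees.

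\emph{Existence of $y$, and the displayed formula.} Recall that $\w^{(n)}=\{\w_{ni}\}_{i\in\Z}$ is an extended canonical $\F^{(n)}$-filtration of $\K_A$, so $\K_{\R'(I^n)}=\K_{\R'(\F^{(n)})}\cong\R'(\w^{(n)})$ as a graded $\R'(\F^{(n)})$-module. Hence the hypothesis says exactly that $\R'(\w^{(n)})_{\ge -b-1}\cong\R(\F^{(n)})(b+1)$ as a graded $\R(\F^{(n)})$-module, which is condition~(1) of Lemma~\ref{y} applied to the filtration $\F^{(n)}$, the $\F^{(n)}$-filtration $\w^{(n)}$ of $\K_A$, and the integer $-b-1$. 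Condition~(2) of that lemma then provides $y\in\K_A$ with $0:_Ay=0$ and $\w_{ni}=[\F^{(n)}]_{i+b+1}\,y=I^{n(i+b+1)}y$ for all $i\ge -b-1$, which is the asserted formula; in particular $\w_{n(-b-1)}=I^{0}y=Ay$.

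\emph{Assertion (1).} Fix $j\ge n(-b-1)$. For the inclusion $(I^{j+n(b+1)})^\ast y\subseteq\w_j$: by Lemma~\ref{ast}(1) the filtration $\w$ is an $\F^\ast$-filtration of $\K_A$, and since $j+n(b+1)\ge 0$ the filtration property applied to $\w_{n(-b-1)}=Ay$ gives $(I^{j+n(b+1)})^\ast y=(I^{j+n(b+1)})^\ast\,\w_{n(-b-1)}\subseteq\w_j$. For the reverse inclusion --- the crux --- let $k$ be the largest integer with $nk\le j$; then $k\ge -b-1$, so $k+1\ge -b-1$ as well. Since $\w$ is an $\F$-filtration, $\w_j\subseteq\w_{nk}=I^{nk+n(b+1)}y\subseteq Ay$, and because $0:_Ay=0$ we may write $\w_j=Jy$ for a unique ideal $J\subseteq I^{nk+n(b+1)}$ of $A$. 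Put $e=n(k+1)-j$, so $1\le e\le n$. The $\F$-filtration property yields $(I^{e}J)y=I^{e}\w_j\subseteq\w_{j+e}=\w_{n(k+1)}=I^{n(k+1)+n(b+1)}y$, hence $I^{e}J\subseteq I^{n(k+1)+n(b+1)}=I^{(j+n(b+1))+e}$ after cancelling $y$. Therefore $J\subseteq I^{(j+n(b+1))+e}:_AI^{e}\subseteq(I^{j+n(b+1)})^\ast$, so $\w_j=Jy\subseteq(I^{j+n(b+1)})^\ast y$; combined with the first inclusion this gives $\w_j=(I^{j+n(b+1)})^\ast y$.

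\emph{Assertions (2), (3), (4).} By Lemma~\ref{ast}(2), $\K_{\R'(\F^\ast)}\cong\R'(\w)$ as a graded $\R'(\F^\ast)$-module; in view of this, assertion~(1) together with $0:_Ay=0$ is precisely condition~(2) of Lemma~\ref{y} for the filtration $\F^\ast$, the $\F^\ast$-filtration $\w$ of $\K_A$, and the integer $n(-b-1)$, so its condition~(1) is assertion~(2). For (3) and (4) note first that, since $I\neq A$ contains a nonzerodivisor, $I^\ast$ is a proper ideal of $A$ (if $1\in I^\ast$ then $I^{m}=I^{m+1}$ for some $m$, forcing $I^{m}=0$ by Nakayama, a contradiction), hence $I^\ast\subseteq\m$; also $\K_A\neq 0$ is finitely generated, so $\m\K_A\subsetneq\K_A$. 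For (4): $n(-b-1)\le 1-n(b+1)$, so assertion~(1) gives $\w_{1-n(b+1)}=(I^{1})^\ast y=I^\ast y\subseteq\m\K_A\subsetneq\K_A$; thus $1-n(b+1)\in\{i\mid\w_i\neq\K_A\}$, whence $\mathrm{a'}(I)=-\inf\{i\mid\w_i\neq\K_A\}\ge n(b+1)-1$, and $\mathrm{a}(\G(I))\ge\mathrm{a'}(I)$ by Lemma~\ref{-a-1}. For (3): the displayed formula gives $\w_{n(-b)}=I^{n}y\subseteq\m\K_A\subsetneq\K_A$, so $\mathrm{a'}(I^n)=-\inf\{i\mid\w_{ni}\neq\K_A\}\ge b$, and $\mathrm{a}(\G(I^n))\ge\mathrm{a'}(I^n)$ again by Lemma~\ref{-a-1} (alternatively, (3) follows from (4) and Proposition~\ref{formula}).

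The main obstacle is the reverse inclusion in assertion~(1): we know $\w$ only on the degrees $ni$ with $i\ge -b-1$, yet must recover $\w_j$ for every $j\ge n(-b-1)$. The key point is that multiplying $\w_j$ by the single power $I^{e}$, with $e=n(k+1)-j$, moves it into the next known degree $n(k+1)$, and the defining colon-formula for the Ratliff--Rush closure then upgrades this approximate description of $\w_j$ to the exact identity $\w_j=(I^{j+n(b+1)})^\ast y$.
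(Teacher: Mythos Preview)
Your proof is correct. The existence of $y$, assertion~(2), and assertions~(3)--(4) are handled essentially as in the paper (the paper writes $\w_{n(-b)}=I^ny\subsetneq Ay\subseteq\K_A$ and $\w_{n(-b-1)+1}=(I^1)^\ast y\subsetneq Ay\subseteq\K_A$ directly, using $0:_Ay=0$ and $I^\ast\neq A$, rather than passing through $\m\K_A$, but this is cosmetic).

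The genuine difference is in the reverse inclusion $\w_j\subseteq(I^{j+n(b+1)})^\ast y$ of assertion~(1). The paper first proves an auxiliary \emph{Claim}: since $\R'(\w)\cong\K_{\R'(I)}$ is finitely generated over $\R'(I)$, there is an integer $r$ with $\w_{i+1}=I\w_i$ for all $i\ge r$; combining this with the known values $\w_{ni}=I^{n(i+b+1)}y$ at multiples of $n$ yields $\w_i=I^{i+n(b+1)}y$ for all $i\ge ns$ with $s$ large. Only then does the paper invoke the colon description $(I^{j+n(b+1)})^\ast=I^{i+j+n(b+1)}:_AI^{i}$ for large $i$ (citing \cite[Proposition 11.1(v)]{Mc2}) to conclude. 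Your argument bypasses the stabilization step entirely: you push $\w_j$ forward by the single power $I^e$ with $e=n(k+1)-j\le n$ into the next known degree $n(k+1)$, and then apply the colon description with this specific small $e$. This is more direct and avoids appealing to finite generation of the canonical module; the paper's route, on the other hand, establishes the stronger intermediate fact $\w_i=I^{i+n(b+1)}y$ for $i\gg0$, though it is not used elsewhere. One small point: the identity $(I^m)^\ast=\bigcup_{e\ge0}I^{m+e}:_AI^e$ that you implicitly use is not the paper's literal definition of the Ratliff--Rush closure (which is $J^\ast=\bigcup_{i\ge0}J^{i+1}:_AJ^i$); the paper cites McAdam for the equivalence, so you might do likewise or insert the one-line verification.
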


\begin{proof}
Since $\R'(I^n)\cong \R'(I)^{(n)}$ as a graded ring, we have $$\K_{\R'(I^n)}\cong \K_{(\R'(I)^{(n)})}\cong (\K_{\R'(I)})^{(n)}\cong \R'(\w)^{(n)}\cong \R'(\w^{(n)})$$
 as a graded $\R'(I^n)$-module, so that $\R'(\w^{(n)})_{\ge -b-1}\cong\R(I^n)(b+1)$ as a graded $\R(I^n)$-module by the assumption. Hence, it directly follows from Lemma \ref{y} that there exists $y\in\K_A$ such that $0:_Ay=0$ and $\w_{ni}=I^{n(i+b+1)}y$ for every integer $i\ge -b-1$.
 
To prove the assertion (1), let us first show the following.

\begin{claim*}
There exists an integer $s$ such that
$\w_{i}=I^{i+n(b+1)}y$ for all integers $i\ge ns$. 
\end{claim*}

\begin{proof}
Since $\R'(\w)\cong \K_{\R'(I)}$ as a graded $\R'(I)$-module,  we have $\R'(\w)$ is a finitely generated $\R'(I)$-module, and hence there exists an integer $r$ such that $\w_{i+1}=I\w_{i}$ for all integers $i\ge r$. Choose an integer $s$ satisfying $s\ge -b-1$ and $ns\ge r$. Then, we obtain that $\w_{ns}=I^{n(s+b+1)}y$ and $\w_{i+1}=I\w_{i}$ for all integers $i\ge ns$. By using induction on $i$, we establish the equality $\w_{i}=I^{i+n(b+1)}y$ for every integer $i\ge ns$, as required.
\end{proof}

So, we can take an integer $s$ such that $\w_{i}=I^{i+n(b+1)}y$ for all integers $i\ge ns$.
Let $j$ be an integer such that $j\ge n(-b-1)$.
Then $\w_j\subseteq Ay$, as $\w_{n(-b-1)}=Ay$. 

Let us next show the inclusion
$$\w_{j}\subseteq (I^{j+n(b+1)})^\ast y$$ holds true. 
In fact,
we have the equality $(I^{j+n(b+1)})^\ast=\bigcup_{i\ge 0}I^{i+j+n(b+1)}:_AI^{i}$ by \cite[Proposition 11.1 (v)]{Mc2}, and hence there exists an integer $i$ such that $i+j\ge ns$ and $(I^{j+n(b+1)})^\ast=I^{i+j+n(b+1)}:_AI^{i}$. Since $I^i\w_{j}\subseteq \w_{i+j}=I^{i+j+n(b+1)}y$, we obtain $$\w_j\subseteq I^{i+j+n(b+1)}y:_{Ay}I^{i}=(I^{i+j+n(b+1)}:_AI^{i})y,$$ where the last equality follows from the equality $0:_Ay=0$. Thus, we obtain that $\w_{j}\subseteq (I^{j+n(b+1)})^\ast y$, as required.

Finally, we will show the converse inclusion
$$\w_{j}\supseteq (I^{j+n(b+1)})^\ast y$$ 
holds true. In fact, we obtain from Lemma \ref{ast} that $(I^{j+n(b+1)})^\ast \w_{n(-b-1)}\subseteq \w_{j}$, and then $(I^{j+n(b+1)})^\ast\cdot Ay\subseteq \w_{j}$ because $\w_{n(-b-1)}=Ay$. 
Thus, $(I^{j+n(b+1)})^\ast y\subseteq \w_{j}$, as required. This completes a proof of the assertion (1). 

(2)
By Lemma \ref{ast}, we have $\w$ is an extended canonical $\F^\ast$-filtration of $\K_A$, so that $\K_{\R'(\F^\ast)}\cong\R'(\w)$ as graded $\R'(\F^\ast)$-module. 
It follows form the assertion (1) together with Lemma \ref{y} that $\R'(\w)_{\ge n(-b-1)}\cong\R(\F^\ast)(n(b+1))$ as a graded $\R(\F^\ast)$-module. Thus, $[\K_{\R'(\F^\ast)}]_{\ge n(-b-1)}\cong\R(\F^\ast)(n(b+1))$ as a graded $\R(\F^\ast)$-module, as desired.

(3)  
Since $\w_{n(-b)}=I^ny\subsetneq Ay\subseteq \K_A$, we get $\w_{n(-b)}\neq\K_A$, and thus $\mathrm{a'}(I^n)\ge b$ by the definition of  $\ai'(I^n)$. The inequality $\mathrm{a}(\G(I^n))\ge b$ follows from Lemma \ref{-a-1}.

(4) By the assertion (1), we have $\w_{n(-b-1)+1}=(I^1)^\ast y\subsetneq Ay\subseteq \K_A$, and thus $\mathrm{a'}(I)\ge n(b+1)-1$. 
The inequality $\mathrm{a}(\G(I))\ge n(b+1)-1$ follows from Lemma \ref{-a-1}.
\end{proof}

When $\R(I^n)$ is a Cohen-Macaulay ring for some integer $n>0$, it follows from \cite[Proof of Theorem (3.1)]{I} that $\grade \G(I^n)_+>0$, and then $(I^{ni})^\ast=I^{ni}$ for all $i\in\Z$, whence we have $(\F^\ast)^{(n)}=\{I^{ni}\}_{i\in\Z}$. The proposition below presents that the converse implication of Lemma \ref{rr} (2) holds true when the ring $\R(I^n)$ is Cohen-Macaulay.

\begin{prop}\label{vero}
Let $b$ be an integer. Assume that $\R(I^n)$ is a Cohen-Macaulay ring for some integer $n>0$. Then 
the following three conditions are equivalent.\vspace{0mm}
\begin{enumerate}
\item $[\K_{\R'(I^n)}]_{\ge -b-1}\cong\R(I^n)(b+1)$ as a graded $\R(I^n)$-module.\vspace{2mm}
\item $[\K_{\R'(I)}]_{\ge n(-b-1)}\cong\R(\F^\ast)(n(b+1))$ as a graded $\R(I)$-module.
\vspace{2mm}
\item $[\K_{\R'(\F^\ast)}]_{\ge n(-b-1)}\cong\R(\F^\ast)(n(b+1))$ as a graded $\R(\F^\ast)$-module.
\end{enumerate}
\end{prop}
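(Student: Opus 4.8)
The plan is to prove the cyclic chain $(1)\Rightarrow(3)\Rightarrow(2)\Rightarrow(1)$, with the Cohen--Macaulay hypothesis entering only in the last implication.

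The implication $(1)\Rightarrow(3)$ needs no new work: it is exactly assertion $(2)$ of Lemma \ref{rr}, whose hypothesis is precisely condition $(1)$. For $(3)\Rightarrow(2)$ I would restrict scalars along the inclusion $\R(I)\hookrightarrow\R(\F^\ast)$. Recall the isomorphism $\K_{\R'(\F^\ast)}\cong\K_{\R'(I)}$ of graded $\R'(I)$-modules noted above; truncating it in degrees $\ge n(-b-1)$ and composing with the graded $\R(\F^\ast)$-isomorphism of $(3)$ (now viewed as a graded $\R(I)$-isomorphism) yields a graded $\R(I)$-isomorphism $[\K_{\R'(I)}]_{\ge n(-b-1)}\cong\R(\F^\ast)(n(b+1))$, which is $(2)$.

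The heart of the proof is $(2)\Rightarrow(1)$. Here I would apply the $n$-th Veronese functor $[-]^{(n)}$ to the graded $\R(I)$-isomorphism of $(2)$; since $[\R(I)]^{(n)}=\R(I^n)$, this produces a graded $\R(I^n)$-isomorphism between the two $n$-th Veronesian submodules. On the source side, $[-]^{(n)}$ commutes with truncation in the precise sense that $\bigl(E_{\ge nc}\bigr)^{(n)}=\bigl(E^{(n)}\bigr)_{\ge c}$ for any graded module $E$ and any $c\in\Z$, and by the natural isomorphisms recalled in Section 3 one has $[\K_{\R'(I)}]^{(n)}\cong\K_{\R'(I^n)}$; thus the source becomes $[\K_{\R'(I^n)}]_{\ge -b-1}$. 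On the target side, since $n(b+1)\in n\Z$ one has $\bigl(\R(\F^\ast)(n(b+1))\bigr)^{(n)}\cong\R\bigl((\F^\ast)^{(n)}\bigr)(b+1)$; and this is exactly where Cohen--Macaulayness of $\R(I^n)$ is used, because it yields $\grade\G(I^n)_+>0$, hence $(I^{ni})^\ast=I^{ni}$ for every $i\in\Z$, hence $(\F^\ast)^{(n)}=\{I^{ni}\}_{i\in\Z}$ and $\R\bigl((\F^\ast)^{(n)}\bigr)=\R(I^n)$. Assembling the identifications gives $[\K_{\R'(I^n)}]_{\ge -b-1}\cong\R(I^n)(b+1)$ as graded $\R(I^n)$-modules, which is $(1)$.

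The step I expect to require the most care is this last one, namely the Veronese bookkeeping: one must verify cleanly that $[-]^{(n)}$ is compatible with truncations and with degree shifts by multiples of $n$ (the identities $\bigl(E_{\ge nc}\bigr)^{(n)}=\bigl(E^{(n)}\bigr)_{\ge c}$ and $\bigl(E(nm)\bigr)^{(n)}=E^{(n)}(m)$), and one must observe that the $n$-th Veronesian subring of $\R(\F^\ast)$ collapses onto $\R(I^n)$ precisely because of the Ratliff--Rush stability $(I^{ni})^\ast=I^{ni}$ granted by the hypothesis. Everything else is transport of structure through the isomorphisms $\K_{\R'(\F^\ast)}\cong\K_{\R'(I)}$ and $[\K_{\R'(I)}]^{(n)}\cong\K_{\R'(I^n)}$, which are already available.
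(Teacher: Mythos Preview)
Your proof is correct and follows essentially the same cyclic route $(1)\Rightarrow(3)\Rightarrow(2)\Rightarrow(1)$ as the paper, invoking Lemma \ref{rr}(2), the $\R'(I)$-isomorphism $\K_{\R'(\F^\ast)}\cong\K_{\R'(I)}$, and the Veronese functor together with the Ratliff--Rush collapse $(\F^\ast)^{(n)}=\{I^{ni}\}_{i\in\Z}$ coming from Cohen--Macaulayness of $\R(I^n)$. Your treatment of the Veronese bookkeeping (compatibility with truncation and with shifts by multiples of $n$) is slightly more explicit than the paper's, but the underlying argument is the same.
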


\begin{proof}
The implication $(1)\Rightarrow (3)$ directly follows from Lemma \ref{rr} (2). The implication  $(3)\Rightarrow (2)$ follows form $\K_{\R'(I)}\cong \K_{\R'(\F^\ast)}$ as a  graded $\R'(I)$-module (see Lemma \ref{ast} (2)). Let us check the implication $(2)\Rightarrow (1)$. Taking $[-]^{(n)}$ of the both side of that equivalence in the condition (2), we have $\big([\K_{\R'(I)}]_{\ge n(-b-1)}\big)^{(n)}\cong\big(\R(\F^\ast)(n(b+1))\big)^{(n)}$ as a graded $\R(I)^{(n)}$-module, so that $[\K_{\R'(I^n)}]_{\ge -b-1}\cong\R((\F^\ast)^{(n)})(b+1)$ as a graded $\R(I^n)$-module. Thus, the condition (1) holds, as $(\F^\ast)^{(n)}=\{I^{ni}\}_{i\in\Z}$.
\end{proof}

Herrmann, Hyry, and Ribbe reveals that, when the Rees algebra $\R(I^n)$ is Gorenstein, there exists
an embedding of $\G(\F^\ast)(-n-1)$ into the canonical module $\K_{\G(I)}$ of graded $\G(I)$-modules (see \cite[Theorem 3.9 (3)]{HHR}).
Let us notice the following result, which includes a slight generalization of their result.

\begin{lem}\label{congG}
Let $n$ be a positive integer and assume that $[\K_{\R'(I^n)}]_{\ge -b-1}\cong\R(I^n)(b+1)$ as a graded $\R(I^n)$-module for some integer $b$. 
Then
$$[\G(\w)]_{\ge n(-b-1)}\cong\G(\F^\ast)(n(b+1))$$ as a graded $\G(\F^\ast)$-module.
Hence, the following two assertions hold.\vspace{0mm}
\begin{enumerate}[$(1)$]
\item There is a monomorphism $\G(\F^\ast)(n(b+1)-1)\hookrightarrow \K_{\G(I)}$ of graded $\G(I)$-modules.\vspace{1mm}
\item There is a monomorphism $\G(\F^\ast)(n(b+1)-1)\hookrightarrow \K_{\G(\F^\ast)}$ of graded $\G(\F^\ast)$-modules. 
\end{enumerate}
\end{lem}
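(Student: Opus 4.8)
The plan is to deduce everything from the displayed isomorphism $[\G(\w)]_{\ge n(-b-1)}\cong\G(\F^\ast)(n(b+1))$, obtained from Proposition \ref{embed2}, and then to graft on the canonical embeddings supplied by Lemma \ref{cf}.

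First I would prove $[\G(\w)]_{\ge n(-b-1)}\cong\G(\F^\ast)(n(b+1))$ as graded $\G(\F^\ast)$-modules by applying Proposition \ref{embed2} with $\F^\ast$ in place of $\F$, with $\w$ regarded as an $\F^\ast$-filtration of $\K_A$ via Lemma \ref{ast} (1), and with the integer $n(-b-1)$. The hypotheses hold: $\F^\ast$ is separated, since $(I^i)^\ast=I^i$ for $i\gg0$ forces $\bigcap_{i}(I^i)^\ast=\bigcap_i I^i=0$ by Krull's intersection theorem, while $\w$ is strongly separated (see, e.g., \cite[Lemma 2.1]{GI}); and condition (1) of Proposition \ref{embed2}, namely $\R'(\w)_{\ge n(-b-1)}\cong\R(\F^\ast)(n(b+1))$, is exactly Lemma \ref{rr} (2) read through the isomorphism $\K_{\R'(\F^\ast)}\cong\R'(\w)$ of Lemma \ref{ast} (2). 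Hence condition (2) of Proposition \ref{embed2} gives the desired isomorphism.

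Next, Lemma \ref{cf} applied with $M=A$ and $\M=\F$ produces a monomorphism $\G(\w)(-1)\hookrightarrow\K_{\G(I)}$ of graded $\G(I)$-modules, and applied with $M=A$ and $\M=\F^\ast$ (using $\K_{\R'(\F^\ast)}\cong\R'(\w)$) it produces a monomorphism $\G(\w)(-1)\hookrightarrow\K_{\G(\F^\ast)}$ of graded $\G(\F^\ast)$-modules. Twisting the isomorphism of the previous paragraph by $(-1)$ identifies the graded submodule $\big([\G(\w)]_{\ge n(-b-1)}\big)(-1)$ of $\G(\w)(-1)$ with $\G(\F^\ast)(n(b+1)-1)$. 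Composing the inclusion $\G(\F^\ast)(n(b+1)-1)\hookrightarrow\G(\w)(-1)$ with the two monomorphisms above yields assertion (2) over $\G(\F^\ast)$ and, after restricting scalars along the natural ring map $\G(I)\to\G(\F^\ast)$, assertion (1) over $\G(I)$.

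I expect the main point needing care to be the interplay of the two module structures on $\G(\w)$: one must check that the $\G(I)$-action obtained by viewing $\w$ as an $\F$-filtration is the restriction, along $\G(I)\to\G(\F^\ast)$, of the $\G(\F^\ast)$-action obtained by viewing $\w$ as an $\F^\ast$-filtration, so that the submodule $[\G(\w)]_{\ge n(-b-1)}$ is $\G(I)$-stable and the isomorphism of the first step descends to graded $\G(I)$-modules. Keeping the degree shifts consistent (in particular $M(a)(-1)=M(a-1)$) is the only other bookkeeping involved; the remaining content is a direct concatenation of Lemmas \ref{cf}, \ref{ast}, \ref{rr} and Proposition \ref{embed2}.
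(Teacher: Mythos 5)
Your argument matches the paper's proof essentially step for step: deduce the displayed $\G(\F^\ast)$-isomorphism by applying Proposition \ref{embed2} to the $\R(\F^\ast)$-module isomorphism coming from Lemmas \ref{ast}(2) and \ref{rr}(2), then graft on the two embeddings of $\G(\w)(-1)$ furnished by Lemma \ref{cf}. The additional details you supply---verifying the separatedness hypotheses of Proposition \ref{embed2} and the compatibility of the $\G(I)$- and $\G(\F^\ast)$-module structures on $\G(\w)$ (the latter handled in the paper via Lemma \ref{extendF})---are correct but left implicit in the paper's proof.
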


\begin{proof} 
Lemma \ref{ast} (2) implies that $\K_{\R'(\F^\ast)}\cong\R' (\w)$ as a graded $\R(\F^\ast)$-module, and hence it follows from Lemma \ref{rr} (2) that $\R' (\w)_{\ge n(-b-1)}\cong\R(\F^\ast)(n(b+1))$ as a graded $\R(\F^\ast)$-module.
Thus, we obtain that $\G(\w)_{\ge n(-b-1)}\cong\G(\F^\ast)(n(b+1))$ as a graded $\G(\F^\ast)$-module by Proposition \ref{embed2}.

(1) There exists a monomorphism $\G(\w)(-1)\hookrightarrow \K_{\G(I)}$ of graded $\G(I)$-modules by Lemma \ref{cf}. Then the required monomorphism follows from the isomorphism above.

(2) By Lemma \ref{ast}, $\w$ is also an extended canonical $\F^\ast$-filtration of $\K_A$, so that there exists a monomorphism $\G(\w)(-1)\hookrightarrow \K_{\G(\F^\ast)}$ of graded $\G(\F^\ast)$-modules by Lemma \ref{cf}. Then the required monomorphism follows from the isomorphism above.
\end{proof}

As a consequence of Lemma \ref{congG} (1), let us introduce the following result given by \cite[Theorem 3.9 and Corollary 3.10]{HHR}. Although their approach assumes that $\H_\fkM^d(\R(I))=0$, their proof is valid without this assumption for Remark \ref{injast} stated below. It should be noted that, under this assumption, their result \cite[Theorem 3.9 (3)]{HHR} not only establishes the monomorphism below but also determines the cokernel of its monomorphism.

\begin{rem}[\cite{HHR}]\label{injast}
Assume that $\R(I^n)$ is a Gorenstein ring for some positive integer $n$. Then there exists a monomorphism 
$
\G(\F^\ast)(-n-1)\hookrightarrow \K_{\G(I)}
$
 of graded $\G(I)$-modules. Hence, $\mathrm{a}(\G(I))\ge -n-1$. 
\end{rem}

\begin{proof}
It follows from Theorem \ref{main1} that $[\K_{\R'(I^n)}]_{\ge 1}\cong\R(I^n)(-1)$ as a graded $\R(I^n)$-module. Hence, the required monomorphism follows from Lemma \ref{congG} in the case where $b=-2$. 
The last assertion follows from $[\K_{\G(I)}]_{n+1}\neq 0$, as $A/I^\ast \neq 0$.
\end{proof}

We are now ready to prove Theorem \ref{in}.

\begin{proof}[Proof of Theorem \ref{in}]
According to Theorem \ref{main1}, 
$\R(I^n)$ is a Gorenstein ring if and only if  
$[\K_{\R'(I^n)}]_{\ge 1}\cong\R(I^n)(-1)$ as a graded $\R(I^n)$-module. 
Therefore, applying Proposition \ref{vero} in the case where $b=-2$, we directly obtain the required equivalence. The last assertion follows from Lemma \ref{congG} (2).
\end{proof}

Let us now present a result of the relationship between $\ai '(I^n)$ and $\ai '(I)$ as follows.

\begin{lem}\label{equal}
Let $n$ be a positive integer and assume that $[\K_{\R'(I^n)}]_{\ge -b-1}\cong\R(I^n)(b+1)$ as a graded $\R(I^n)$-module for some integer $b$. Then the following four conditions are equivalent.
\begin{enumerate}
\item $\ai '(I^n)= b$.\vspace{1mm}
\item $\mathrm{a'}(I)= n(b+1)-1$.
\vspace{1mm}
\item $\K_{\R'(I)}\cong\R'(\F^\ast)(n(b+1))$ as a graded $\R(I)$-module.
\vspace{1mm}
\item $\K_{\R'(\F^\ast)}\cong\R'(\F^\ast)(n(b+1))$ as a graded $\R(\F^\ast)$-module.\end{enumerate}
Hence, if $b=\ai(\G(I^n))$, then the equality $\mathrm{a'}(I)= n(\ai(\G(I^n))+1)-1$ holds.
\end{lem}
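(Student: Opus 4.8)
The plan is to translate all four conditions into statements about the extended canonical $\F$-filtration $\w=\{\w_i\}_{i\in\Z}$ of $\K_A$, using that $\K_{\R'(I)}\cong\R'(\w)$ and that, by Lemma \ref{ast}, $\w$ is simultaneously an extended canonical $\F^\ast$-filtration of $\K_A$, so that $\K_{\R'(\F^\ast)}\cong\R'(\w)$ and $\K_{\R'(\F^\ast)}\cong\K_{\R'(I)}$ as graded $\R'(I)$-modules. First I would record the consequences of the standing hypothesis $[\K_{\R'(I^n)}]_{\ge-b-1}\cong\R(I^n)(b+1)$ furnished by Lemma \ref{rr}: an element $y\in\K_A$ with $0:_Ay=0$ such that $\w_{ni}=I^{n(i+b+1)}y$ for all $i\ge-b-1$ and $\w_i=(I^{i+n(b+1)})^{\ast}y$ for all $i\ge-n(b+1)$, together with $\ai'(I)\ge n(b+1)-1$ from Lemma \ref{rr}(4). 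Comparing the two displayed formulas at the indices $ni$ (legitimate since $i\ge-b-1$ forces $ni\ge-n(b+1)$) and cancelling $y$ gives $(I^{nm})^{\ast}=I^{nm}$ for every $m\ge0$, hence $(\F^\ast)^{(n)}=\F^{(n)}$. I will also use that $\w$ is decreasing, is eventually equal to $\K_A$, and satisfies $\w_j=\K_A$ precisely for $j<-\ai'(I)$.

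For $(1)\Leftrightarrow(2)$ I would invoke Proposition \ref{formula}, which gives $\ai'(I^n)=[\ai'(I)/n]$. Combined with $\ai'(I)\ge n(b+1)-1$ this is a one-line floor estimate: if $\ai'(I)=n(b+1)-1$ then $[\ai'(I)/n]=b$, while if $\ai'(I)\ge n(b+1)$ then $[\ai'(I)/n]\ge b+1$; hence $\ai'(I^n)=b$ if and only if $\ai'(I)=n(b+1)-1$.

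For the remaining equivalences I would run the cycle $(2)\Rightarrow(4)\Rightarrow(3)\Rightarrow(2)$. Assume $(2)$, so $\ai'(I)=n(b+1)-1$; then $\w_{-n(b+1)}=\K_A$, and since $\w_{-n(b+1)}=(I^{0})^{\ast}y=Ay$ with $0:_Ay=0$, the assignment $1\mapsto y$ defines an $A$-isomorphism $f:A\xrightarrow{\sim}\K_A$. One checks $\w_i=(I^{i+n(b+1)})^{\ast}y$ for every $i\in\Z$ (for $i\ge-n(b+1)$ this is the formula above, and for $i<-n(b+1)$ both sides equal $\K_A$), so $f$ carries the $\F^\ast$-filtration $\F^\ast(n(b+1))$ of $A$ bijectively, degree by degree, onto $\w$; extending $f$ to an $\R'(\F^\ast)$-homomorphism as in Section 2 produces a graded isomorphism $\R'(\F^\ast)(n(b+1))\cong\R'(\w)\cong\K_{\R'(\F^\ast)}$, which is $(4)$. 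The implication $(4)\Rightarrow(3)$ is immediate from $\K_{\R'(\F^\ast)}\cong\K_{\R'(I)}$. For $(3)\Rightarrow(2)$, apply the $n$-th Veronese functor to the isomorphism of $(3)$; using $[\K_{\R'(I)}]^{(n)}\cong\K_{\R'(I^n)}$ and $(\F^\ast)^{(n)}=\F^{(n)}$ one obtains $\K_{\R'(I^n)}\cong\R'(I^n)(b+1)$ as graded $\R(I^n)$-modules. Comparing homogeneous components of large negative degree forces $\K_A\cong A$, so $A$ satisfies $(S_2)$ and Lemma \ref{unique} applies to $\F^{(n)}$: since $\w^{(n)}$ and, after identifying $\K_A$ with $A$, the filtration $\F^{(n)}(b+1)$ are both extended canonical $\F^{(n)}$-filtrations of $\K_A$, they coincide, whence $\w_{-n(b+1)}=\K_A$ and therefore $\ai'(I)\le n(b+1)-1$; with Lemma \ref{rr}(4) this gives $\ai'(I)=n(b+1)-1$.

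For the closing assertion, suppose $b=\ai(\G(I^n))$. Lemma \ref{rr}(3) gives $\ai'(I^n)\ge b$, and Lemma \ref{-a-1}, applied to the filtration $\F^{(n)}$ of $A$, gives $\ai'(I^n)\le\ai(\G(I^n))=b$; hence $\ai'(I^n)=b$, i.e. condition $(1)$, and $(1)\Rightarrow(2)$ yields $\ai'(I)=n(\ai(\G(I^n))+1)-1$. The main obstacle is the passage $(3)\Rightarrow(2)$: condition $(3)$ only supplies an isomorphism of $\R(I)$-modules, and a naive comparison of low-degree components yields merely $\w_{-n(b+1)}\cong A\cong\K_A$, not the equality $\w_{-n(b+1)}=\K_A$ needed to locate $\ai'(I)$ exactly; the device that upgrades this to an honest equality is to pass to the $n$-th Veronese, recognize $\K_{\R'(I^n)}\cong\R'(I^n)(b+1)$, and invoke the uniqueness of the extended canonical filtration (Lemma \ref{unique}), which is available precisely because $\K_A\cong A$ forces the condition $(S_2)$.
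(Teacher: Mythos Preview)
Your cycle $(2)\Rightarrow(4)\Rightarrow(3)$, the equivalence $(1)\Leftrightarrow(2)$, and the closing sentence all match the paper. The only real difference is how you return from $(3)$. The paper simply reduces the isomorphism of $(3)$ modulo $t^{-1}$, obtaining $\K_{\R'(I)}/t^{-1}\K_{\R'(I)}\cong\G(\F^\ast)(n(b+1))$, and reads off $\ai'(I)=n(b+1)-1$ immediately from the least nonvanishing degree (since $[\G(\F^\ast)]_0=A/I^\ast\ne 0$). Your Veronese--then--uniqueness route also works and is more elaborate; a pleasant byproduct is that you extract $(\F^\ast)^{(n)}=\F^{(n)}$ from Lemma~\ref{rr} alone, by comparing the two descriptions of $\w_{ni}$, whereas the paper obtains this separately (under a Cohen--Macaulay hypothesis) just before Proposition~\ref{vero}. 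One point to tighten: your appeal to Lemma~\ref{unique} needs the transported filtration $\F^{(n)}(b+1)$ to be an \emph{extended} canonical $\F^{(n)}$-filtration, i.e.\ to come with an $\R'(I^n)$-isomorphism to $\K_{\R'(I^n)}$, but the Veronese of an $\R(I)$-map is a priori only $\R(I^n)$-linear. This is harmless here---since $I$ contains a nonzerodivisor $a$ on $A$, for any graded $\R(I)$-map $\phi$ between modules of the form $\R'(\calL)$ with target filtration on $A$ one has $a\phi|_{k-1}(m)=\phi|_k(am)=a\phi|_k(m)$, forcing $t^{-1}$-compatibility---but it deserves a word. The paper's own mod-$t^{-1}$ step tacitly uses the same upgrade, and indeed its later restatements (Theorem~\ref{qGR'b}, Corollary~\ref{incor}) write $\R'(I)$ rather than $\R(I)$.
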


\begin{proof}
$(1)\Rightarrow (2)$ Suppose that $\ai '(I^n)= b$. By Lemma \ref{rr} (4), we have the inequality $\mathrm{a'}(I)+1\ge n(\ai '(I^n)+1)$. Proposition \ref{formula} implies the converse inequality holds true, so that $\mathrm{a'}(I)= n(b+1)-1$, as required.

$(2)\Rightarrow (4)$ We can take $y\in\K_A$ such that $0:_Ay=0$ and $\w_{i}=(I^{i+n(b+1)})^\ast y$ for all integers $i\ge n(-b-1)$ by Lemma \ref{rr}, in particular $\w_{n(-b-1)}=Ay$. By the definition of $\ai'(I)$, we have $\K_A=\w_{-\ai'(I)-1}$.
The condition (2) implies that 
$\w_{-\ai'(I)-1}=\w_{n(-b-1)}$, and therefore $\K_A=Ay$. Let $f: A\to \K_A=Ay$ be the $A$-isomorphism given by $1\mapsto y$, and then $f((I^{i+n(b+1)})^\ast)=\w_i$ for every $i\in\Z$.
Hence, setting $\varphi: \R'\big(\F^\ast(n(b+1))\big) \xrightarrow{}\R' (\w)$ to be the graded $\R'(\F^\ast)$-homomorphism extended from $f$, we observe $\varphi$ is bijective.
Thus,
$\K_{\R'(\F^\ast)}\cong\R'(\F^\ast)(n(b+1))$ as a graded $\R'(\F^\ast)$-module, as desired.

$(4)\Rightarrow (3)$ This follows form $\K_{\R'(I)}\cong \K_{\R'(\F^\ast)}$ as a  graded $\R'(I)$-module. 

$(3)\Rightarrow (1)$ 
From the condition (3), we obtain that
$$\K_{\R'(I)}\big/t^{-1}\K_{\R'(I)}\cong \left(\R'(\F^\ast)\big/t^{-1}\R'(\F^\ast)\right)(n(b+1))=\G(\F^\ast)(n(b+1))$$ as a graded $\G(I)$-module, which implies that $\mathrm{a'}(I)= n(b+1)-1$, as $I^\ast\neq A$.

Let us check the last assertion. We have $\ai (\G(I^n))\ge \ai '(I^n)\ge b$ by Lemma \ref{-a-1} and Lemma \ref{rr} (3). 
Since $\ai (\G(I^n))= b$, we get $\ai '(I^n)= b$, and thus $\mathrm{a'}(I)+1= n(b+1)$ by the implication $(1)\Rightarrow (2)$, as desired.
\end{proof}

The following theorem
elucidates the relationship between a quasi-Gorenstein properties $\R'(I^n)$ and $\R'(\F^\ast)$, assuming the ring $\R(I^n)$ to be Cohen-Macaulay, without requiring the ring $\R(I)$ to be Cohen-Macaulay. Moreover, 
its last assertion confirms that the quasi-Gorenstein properties of the ring $\R'(I^n)$ determine the $a$-invariant of the ring $\G(I)$, provided $\R(I^n)$ is a Cohen-Macaulay ring.

\begin{thm}\label{qGR'b}
Let $b$ be an integer. Assume that $\R(I^n)$ is a Cohen-Macaulay ring for some integer $n>0$. Then the following three conditions are equivalent.
\begin{enumerate}
\item $\K_{\R'(I^n)}\cong\R'(I^n)(b+1)$ as a graded $\R'(I^n)$-module.\vspace{1mm}
\item $\K_{\R'(I)}\cong\R'(\F^\ast)(n(b+1))$ as a graded $\R'(I)$-module.
\vspace{1mm}\item $\K_{\R'(\F^\ast)}\cong\R(\F^\ast)(n(b+1))$ as a graded $\R(\F^\ast)$-module.
\end{enumerate}
When this is the case, the equalities $b=\ai(\G(I^n))$ and
$$\mathrm{a}(\G(I))= n(b+1)-1$$
hold true.
\end{thm}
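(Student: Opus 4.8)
The plan is to recognise conditions (1)--(3) as the ``untruncated'' counterparts of the three conditions of Proposition \ref{vero}, and then to extract the precise graded shifts via Lemma \ref{equal}; throughout I would work with the filtration $\F^{(n)}=\{I^{ni}\}_{i\in\Z}$, for which $\R(\F^{(n)})=\R(I^n)$ is Cohen--Macaulay with $\dim\R(\F^{(n)})=d+1$, so that Corollary \ref{maincor} and Theorem \ref{corqGR'} are available for $\F^{(n)}$.

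First I would observe that each of (1), (2), (3) already implies the standing hypothesis $[\K_{\R'(I^n)}]_{\ge -b-1}\cong\R(I^n)(b+1)$ of Lemma \ref{equal}. Indeed, restricting an isomorphism of graded modules to the homogeneous components of degree $\ge m$ carries $\R'(X)(k)$ onto $\R(X)(k)$ exactly when $m=-k$; hence (1) truncated in degrees $\ge -b-1$, and (2), (3) truncated in degrees $\ge n(-b-1)$, yield respectively conditions (1), (2), (3) of Proposition \ref{vero}, and these are equivalent, so in each case the first one holds. Consequently Lemma \ref{equal} applies, and since its condition (3) (resp. (4)) is precisely condition (2) (resp. (3)) of the theorem, the equivalence $(2)\Leftrightarrow(3)$ follows immediately.

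It then remains to tie (1) to (2). For $(1)\Rightarrow(2)$ I would apply Theorem \ref{corqGR'} to $\F^{(n)}$ with $a=b$: condition (1) gives $\K_{\G(I^n)}\cong\G(I^n)(b)$, hence $b=\ai(\G(I^n))$; inserting this into the last clause of Lemma \ref{equal} yields $\mathrm{a'}(I)=n(b+1)-1$, which is condition (2) of Lemma \ref{equal}, whence condition (3) there, i.e.\ (2). For $(2)\Rightarrow(1)$, condition (2) is condition (3) of Lemma \ref{equal}, so (the hypothesis being met) condition (1) of Lemma \ref{equal} holds: $\mathrm{a'}(I^n)=b$. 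By Lemma \ref{rr} there is $y\in\K_A$ with $0:_Ay=0$ and $\w_{ni}=I^{n(i+b+1)}y$ for all $i\ge -b-1$; taking $i=-b-1$ gives $\w_{n(-b-1)}=Ay$, while $\mathrm{a'}(I^n)=b$ forces $\w_{n(-b-1)}=\K_A$, so $\K_A=Ay\cong A$ and in fact $\w_{ni}=I^{n(i+b+1)}y$ for every $i\in\Z$ (both sides equal $\K_A$ when $i\le -b-1$). Thus $1\mapsto y$ induces a graded $\R'(I^n)$-isomorphism $\R'(\F^{(n)}(b+1))\xrightarrow{\sim}\R'(\w^{(n)})$, and since $\R'(\F^{(n)}(b+1))\cong\R'(I^n)(b+1)$ while $\R'(\w^{(n)})\cong\K_{\R'(I^n)}$ (as $\w^{(n)}$ is an extended canonical $\F^{(n)}$-filtration of $\K_A$), this gives (1).

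Finally, for the two displayed equalities: under (1)--(3) we have $\mathrm{a'}(I^n)=b$ (condition (1) of Lemma \ref{equal}), and Corollary \ref{maincor} applied to $\F^{(n)}$ gives $\mathrm{a'}(I^n)=\ai(\G(I^n))$, so $b=\ai(\G(I^n))$. Likewise Lemma \ref{equal} yields $\mathrm{a'}(I)=n(b+1)-1$, whence $\ai(\G(I))\ge\mathrm{a'}(I)=n(b+1)-1$ by Lemma \ref{-a-1}; the reverse inequality $\ai(\G(I))\le n(b+1)-1$ I would deduce from $b=\ai(\G(I^n))$ together with the comparison of $\ai(\G(I^n))$ with $\ai(\G(I))$ (cf.\ \cite[Theorem 4.1 and Corollary 4.3]{HZ}). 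This last step is the main obstacle: since $\R(I)$ itself need not be Cohen--Macaulay, one cannot simply invoke Corollary \ref{maincor} (or Proposition \ref{mainthm}) to identify $\mathrm{a'}(I)$ with $\ai(\G(I))$, and the upper bound has to be imported from the behaviour of the $a$-invariant of the associated graded ring under passage to powers.
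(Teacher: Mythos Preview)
Your argument is correct and follows essentially the same strategy as the paper: reduce to the truncated conditions of Proposition \ref{vero} so that the standing hypothesis of Lemma \ref{equal} is in force, then use the equivalences of Lemma \ref{equal} together with Corollary \ref{maincor} (for $\F^{(n)}$) and the Hoa--Zarzuela formula $[\ai(\G(I))/n]=\ai(\G(I^n))$ to pin down the two $a$-invariants. Your identification of the ``obstacle'' is exactly the point the paper addresses in the same way.

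The one place where the paper is shorter is the implication back to condition (1). You prove $(2)\Rightarrow(1)$ by showing $\K_A=Ay$ and then explicitly verifying $\w_{ni}=I^{n(i+b+1)}y$ for all $i\in\Z$; the paper instead proves $(3)\Rightarrow(1)$ in one line by applying the $n^{\text{th}}$ Veronese functor to the isomorphism $\K_{\R'(\F^\ast)}\cong\R'(\F^\ast)(n(b+1))$ and using that $(\F^\ast)^{(n)}=\{I^{ni}\}_{i\in\Z}$ (this equality being available because $\R(I^n)$ is Cohen--Macaulay, hence $\grade\G(I^n)_+>0$). Both routes are valid; the Veronese argument avoids reopening the description of $\w$.
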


\begin{proof}
$(1)\Rightarrow (2)$ 
The condition (1) implies that $[\K_{\R'(I^n)}]_{\ge -b-1}\cong\R(I^n)(b+1)$ as a graded $\R(I^n)$-module and that
$\left(\K_{\R'(I^n)}\big/t^{-1}\K_{\R'(I^n)}\right)(-1)\cong \left(\R'(I^n)\big/t^{-1}\R'(I^n)\right)(b)$ as a graded $\G(I^n)$-module. The latter isomorphism yields the equality $\ai'(I^n)=b$, as $I^n\neq A$, and thus, the condition (2) holds by Lemma \ref{equal}.

$(2)\Leftrightarrow (3)$ This directly follows from form Lemma \ref{equal}.

$(3)\Rightarrow (1)$ The condition (3) implies that $\big(\K_{\R'(I)}\big)^{(n)}\cong\big(\R'(\F^\ast)(n(b+1))\big)^{(n)}$ as a graded $\R'(I)^{(n)}$-module, so that $\K_{\R'(I^n)}\cong\R((\F^\ast)^{(n)})(b+1)$ as a graded $\R(I^n)$-module. Since $\R(I^n)$ is a Cohen-Macaulay ring, we have $(\F^\ast)^{(n)}=\{I^{ni}\}_{i\in\Z}$. Thus, the condition (1) holds.

Let us shows the last assertion. Since $\R(I^n)$ is a Cohen-Macaulay ring, we have 
$$\K_{\G(I^n)}\cong \left(\K_{\R'(I^n)}\big/t^{-1}\K_{\R'(I^n)}\right)(-1)$$ as a graded $\G(I^n)$-module by Corollary \ref{maincor}. Hence, we obtain that $$\K_{\G(I^n)}\cong \left(\R'(I^n)\big/t^{-1}\R'(I^n)\right)(b)$$ as a graded $\G(I^n)$-module because $\K_{\R'(I^n)}\cong\R'(I^n)(b+1)$ as a graded $\R(I^n)$-module. Since $A/I^n\neq 0$, we get the equality $b=\ai(\G(I^n))$, as desired. Then we obtain from the last assertion in Lemma \ref{equal} that $\mathrm{a}'(I)= n(\ai(\G(I^n)+1)-1$, whence 
$$\ai(\G(I^n))=\big((\mathrm{a'}(I)+1)/n\big)-1.$$ 
The equality $\mathrm{a}(\G(I^n))=[\ai(\G(I))/n]$ follows from \cite[Corollary 4.3]{HZ}, where $[-]$ stands for the smallest integral part, so that $[\ai(\G(I))/n]=\big((\mathrm{a'}(I)+1)/n\big)-1$. Hence, it follows from the definition of $[-]$ that
$\ai(\G(I))/n<(\mathrm{a'}(I)+1)/n,$ whence $\ai(\G(I))\le\mathrm{a'}(I)$. From Lemma \ref{-a-1}, we obtain the equality $\ai(\G(I))=\mathrm{a'}(I)$.
This yields the required equality $\mathrm{a}(\G(I))= n(b+1)-1$, as $b=\ai(\G(I^n))$ and $\ai(\G(I^n))=\big((\mathrm{a'}(I)+1)/n\big)-1$.
\end{proof}

Let us here give a proof of Corollary \ref{incor} as follows.

\begin{proof}[Proof of Corollary \ref{incor}]
According to Corollary \ref{maincor1}, the condition (1) is equivalent to saying that $\K_{\R'(I^n)}\cong\R'(I^n)(-1)$ as a graded $\R'(I^n)$-module, as $\grade I\ge 2$. Applying Theorem \ref{qGR'b} for $b=-2$, we get the required equivalence and the last assertion. 
\end{proof}

Concluding this paper, let us prove the assertion $(3)$ of Example \ref{qGex}.

\begin{proof}[Proof of the assertion $(3)$ of Example \ref{qGex}]
$(\Rightarrow )$  
We have $\ai(\G(I))=-d+1$ by Example \ref{qGex} (2). Then the required equality follows from the last assertion in Corollary \ref{incor}. 

$(\Leftarrow )$ We obtain from Example \ref{qGex} (2) that $\K_{\R'(I^{d-2})}\cong\R'(I^{d-2})(-1)$ as a graded $\R'(I^{d-2})$-module. Therefore, it follows from \cite[Theorem 3.1]{K2} together with Corollary \ref{maincor1} that $\R(I^{d-2})$ is a Gorenstein ring. 
\end{proof}

\bibliographystyle{amsplain}

\end{document}